\numberwithin{equation}{section}
\let\savedbigtimes\bigtimes
\let\bigtimes\relax
\let\bigtimes\savedbigtimes
\crefname{appsec}{Appendix}{Appendices}
\newtheorem{theorem}{Theorem}[section]
\newtheorem{lemma}[theorem]{Lemma}
\newtheorem{corollary}[theorem]{Corollary}
\theoremstyle{definition}
\newtheorem{definition}[theorem]{Definition}
\newtheorem*{assumption*}{Assumption}
\newtheorem{remark}[theorem]{Remark}
\crefname{lemma}{Lemma}{Lemmas}
\crefname{theorem}{Theorem}{Theorems}
\crefname{definition}{Definition}{Definitions}
\crefname{fact}{Fact}{Facts}
\crefname{claim}{Claim}{Claims}
\crefname{proposition}{Proposition}{Propositions}
\newcommand{\E}{\mathbb{E}}
\DeclareMathOperator*{\argmax}{arg\,max}
\newcommand{\eps}{\varepsilon}
\renewcommand{\epsilon}{\varepsilon}
\newcommand{\R}{\mathbb{R}}
\renewcommand{\P}{\mathbb{P}}
\newcommand{\beq}{\begin{equation}}
\newcommand{\eeq}{\end{equation}}
\newcommand{\bG}{\bm{G}}
\crefname{appsec}{Appendix}{Appendices}
\renewcommand{\le}{\leqslant}
\renewcommand{\leq}{\leqslant}
\renewcommand{\ge}{\geqslant}
\renewcommand{\geq}{\geqslant}
\renewcommand{\emph}{\textit}
\newcommand{\erdos}{Erd\H{o}s\xspace}
\newcommand{\renyi}{R\'enyi\xspace}
\newcommand{\MMSE}{\operatorname{MMSE}}
\newcommand{\defeq}{\coloneqq}
\newcommand{\parhead}[1]{\medskip \noindent {\bfseries\boldmath\ignorespaces {#1}}\hskip 0.9em}
\newcommand{\Ber}{\operatorname{Ber}}
\newcommand{\wt}[1]{\widetilde{#1}}
\renewcommand{\P}{\mathbb{P}}
\newcommand{\lprp}[1]{\left(#1\right)}
\newcommand{\bone}{\mathbf{1}}
\renewcommand{\E}{\mathbb{E}}
\newcommand{\calA}{\mathcal A}
\newcommand{\calE}{\mathcal E}
\newcommand{\calX}{\mathcal X}
\renewcommand{\le}{\leqslant}
\renewcommand{\leq}{\leqslant}
\renewcommand{\ge}{\geqslant}
\renewcommand{\geq}{\geqslant}
\renewcommand{\phi}{\varphi}
\begin{document}

\title[The Fundamental Limits of Recovering Planted Subgraphs]{The Fundamental Limits of Recovering Planted Subgraphs}

\author[D.Z. Lee, F. Pernice, A. Rajaraman, I. Zadik]{Daniel Z. Lee$^\circ$
  \and Francisco Pernice$^\circ$
  \and Amit Rajaraman$^\circ$
  \and Ilias Zadik$^\dagger$}
  \thanks{\raggedright$^\circ$CSAIL, Massachusetts Institute of Technology;
$^\dagger$Department of Statistics and Data Science, Yale University.
Email: \texttt{\{lee\_d,fpernice,amit\_r\}@mit.edu, ilias.zadik@yale.edu}}

\begin{abstract}
   Given an arbitrary subgraph $H=H_n$ and $p=p_n \in (0,1)$, the planted subgraph model is defined as follows. A statistician observes the union of the ``signal,'' which is a random ``planted'' copy $H^*$ of $H$, together with random noise in the form of an instance of an \erdos--\renyi graph $G(n,p)$. Their goal is to then recover the planted $H^*$ from the observed graph. Our focus in this work is to understand the minimum mean squared error (MMSE), defined in terms of recovering the edges of $H^*$, as a function of $p$ and $H$, for sufficiently large $n$.

    A recent paper \cite{MNSSZ23} characterizes the graphs for which the limiting (as $n$ grows) MMSE curve undergoes a sharp phase transition from $0$ to $1$ as $p$ increases, a behavior known as the all-or-nothing phenomenon, up to a mild density assumption on $H$. However, their techniques fail to describe the MMSE curves for graphs that do not display such a sharp phase transition. In this paper, we provide a formula for the limiting MMSE curve for \emph{any} graph $H=H_n$, up to the same mild density assumption. This curve is expressed in terms of a variational formula over pairs of subgraphs of $H$, and is inspired by the celebrated subgraph expectation thresholds from the probabilistic combinatorics literature \cite{KK07}. Furthermore, we give a polynomial-time description of the optimizers of this variational problem. This allows one to efficiently approximately compute the MMSE curve for any dense graph $H$ when $n$ is large enough. The proof relies on a novel graph decomposition of $H$ as well as a new minimax theorem which may be of independent interest.

    Our results generalize to the setting of minimax rates of recovering arbitrary monotone boolean properties planted in random noise, where the statistician observes the union of a planted minimal element $A \subseteq [N]$ of a monotone property and a random $\Ber(p)^{\otimes N}$ vector. In this setting, we provide a variational formula inspired by the so-called ``fractional'' expectation threshold \cite{Tal10}, again describing the MMSE curve (in this case up to a multiplicative constant) for large enough $n$.
\end{abstract}

\maketitle

\date{\today}

\thispagestyle{empty}
\setcounter{page}{0}



\newpage



\tableofcontents
\newpage \section{Introduction}

Over the past two decades, several influential papers have addressed the limiting (normalized) minimum mean squared error (MMSE) in high-dimensional statistical models, including \cite{dia2016mutual,lelarge2017fundamental,montanari2006analysis,reeves2019replica} among others. In this paper, we focus on understanding the limiting MMSE in the fundamental problem of recovering planted discrete structures from noisy random environments.

\parhead{A motivating example: planted clique.} A very well-studied discrete high-dimensional model is the planted clique model, initially defined independently by Jerrum \cite{jerrum1992large} and Ku\v{c}era \cite{kuvcera1995expected}. To define it, we need two parameters, $k=k_n \in \mathbb{N}$ and $p=p_n \in (0,1)$. We assume that the statistician observes the union of a ``signal,'' which is a $k$-clique chosen uniformly at random from the complete graph $K_n$, and the ``noise,'' which is an instance of an \erdos--\renyi graph $G(n,p)$. The goal of the statistician is to recover the planted clique from the observed graph. It should be noted that there is a large body of work dedicated to understanding the special case where $p=1/2$ and $k=O(\sqrt{n})$. However, it is beneficial for us here to consider the more general case where $k=k_n$ and $p=p_n$ can scale arbitrarily with $n$. This generality has recently proven useful in circuit lower bounds \cite{gamarnik2023sharp} and low-degree lower bounds \cite{xifan24}, among others.

The planted clique model has been studied extensively due to its conjectured computational hardness in certain regimes, and has interesting implications for other high-dimensional statistical models (see, e.g., \cite{brennan2018reducibility} and references therein). From a purely statistical perspective, \cite{MNSSZ23} recently computed the limiting MMSE of the model as $n$ grows to infinity, proving that it converges to a step function, a behavior known as the All-or-Nothing (AoN) phenomenon (initially discovered in the context of Gaussian sparse regression, first conjectured to hold in \cite{gamarnik2022sparse} and proven in \cite{reeves2020all}).

\parhead{The planted subgraph model.} Motivated by the extensive study of models similar to the planted clique model, such as the planted matching model \cite{moharrami2021planted,ding2023planted} and the planted dense subgraph model \cite{dhawan2025detection}, the authors of \cite{MNSSZ23} initiated the study of the limiting MMSE for general planted subgraphs (also independently introduced in \cite{huleihel2022inferring} and \cite{abram2023cryptography} by the cryptography community). To define this model, for each $n \in \mathbb{N}$, fix \emph{any} (unlabeled) subgraph $H=H_n$
of the complete graph $K_n$ and some $p=p_n \in (0,1)$.
Next, draw a uniformly random copy $H^*$ of $H$ in the complete graph.
The statistician observes the graph $G$ on $n$ vertices which is the edge union of the ``signal'' $H^*$ with ``noise,'' which is an instance of $G_0 \sim G(n,p),$ i.e., $G=H^* \cup G_0$. It is easy to verify that this more general model subsumes the problems of planted clique and planted perfect matching as special cases.

Following \cite{MNSSZ23}, the main object of interest for us is the (normalized) MMSE of the model, defined as
\begin{equation}
    \label{eq:def-mmse}
    \mathrm{MMSE}_n(p)=\frac{1}{|H|}\min_{\hat{A}} \E \left[ \|\mathbf{1}_{H^*}-\hat{A}(G)\|^2_2\right]=\frac{1}{|H|}\mathbb{E}\left[ \| \mathbf{1}_{H^*}-\mathbb{E}[\mathbf{1}_{H^*}|G]\|^2_2\right],
\end{equation}
where by $\mathbf{1}_{H^*}$ we refer the indicator vector of the edges of $H^*$, $\mathbf{1}_{H*} \in \{0,1\}^{\binom{n}{2}}$ \footnote{Throughout the paper we identify a graph $H$ with its edge set. Hence, by $J \subseteq H$ we refer to the edge-induced subgraph $J$ of $H$. Moreover, note that the MMSE \eqref{eq:def-mmse} in the planted subgraph model is measured in terms of number of edges recovered.}. Observe that $\MMSE_n(p) \in [0,1]$, since the trivial estimator $\hat{A}=0$ attains mean squared error $1$. 


The main result of \cite{MNSSZ23} is that for any ``weakly dense''\footnote{This result concerns graphs $H$ with $|H|$ edges and $v(H)$ vertices that satisfy $|H| =\omega(v(H)\log v(H)).$} $H=H_n,$ the planted subgraph model for $H=H_n$ satisfies the AoN phenomenon (i.e., the MMSE converges to a step function) if and only if the graph $H$ is balanced, in that $\max_{J \subseteq H} \frac{|J|}{v(J)} = \frac{|H|}{v(H)}.$\footnote{Technically, \cite{MNSSZ23} proves this equivalence for an asymptotic definition of balancedness, but we omit the details for the introduction.}
The main goal of this paper is to go beyond the AoN phenomenon, and characterize the MMSE curve for an \emph{arbitrary} weakly dense $H$, for sufficiently large $n$.


\parhead{Kahn--Kalai thresholds and Bayesian approaches.}
An easy application of Bayes' rule shows that the posterior distribution of the planted subgraph model is the uniform distribution among all the copies of $H=H_n$ in the observed graph $G=H^* \cup G_0, G_0 \sim G(n,p)$. As a result, to understand the MMSE one needs to accurately calculate appropriate subgraph counts in $G$; specifically, those of the copies of $H$ in $G$ that have a given overlap with the planted copy $H^*$.

Because the planted graph $H=H_n$ may be arbitrary, na\"{i}ve first and second moment method arguments fail to provide good estimates for these subgraph counts. To circumvent this issue, we derive inspiration from probabilistic combinatorics in the study of the ``null" model $G(n,p)$, specifically the ``Kahn--Kalai'' conjectures on the tightness of expectation thresholds \cite{KK07,Tal10,FKP21,PP24}.

The expectation thresholds relate to the following question about subgraph counts: given an (unlabeled) graph $H$, at what critical value of $p = p_c(H)$ does a copy of $H$ begin to occur in $G(n,p)$ with high probability?\footnote{More concretely, what is the value of $p_c$ such that $\mathbb{P}_{G \sim G(n,p_c)}\left[ \text{a copy of } H \text{ appears in } G \right] = 1/2$?}
This has been studied for specific choices of $H$ as far back as the original works of \erdos and \renyi \cite{ER59,ER60}, and has spurred a rich body of work. A series of conjectures by Kahn and Kalai, later refined by Talagrand, provide simple formulas that are conjectured to yield the critical threshold $p_c$ up to a multiplicative $O(\log |H|)$ factor.

In particular, a conjecture of Kahn and Kalai, which remains unproven, asserts that the critical threshold for any $H$ is essentially given by its \emph{subgraph expectation threshold}, $\max_{S \subseteq H} p_{\mathrm{1M}}(S),$ up to a multiplicative $O(\log |H|)$ factor. Here, $p_{\mathrm{1M}}(S)$ is the first moment threshold for the appearance of the subgraph $S$. A straightforward first moment calculation implies that $p_{\mathrm{1M}}(S)$ is approximately $n^{-v(S)/|S|}$ for weakly dense $S$.
More sophisticated first moment methods result in the \emph{expectation threshold} \cite{KK07} and the \emph{fractional expectation} threshold \cite{Tal10}, which are now known to yield the critical threshold up to a multiplicative $O(\log |H|)$ due to the breakthrough works \cite{FKP21,PP24}.
It should also be noted that these results generalize well beyond subgraph inclusion properties, and identify the critical thresholds for arbitrary monotone properties on a random discrete universe of points under the product Bernoulli measure.

While the original proof of tightness of the fractional expectation threshold in \cite{FKP21} proceeds by careful combinatorial arguments, \cite{mossel2022second} provides an alternate Bayesian proof using an approach similar to that employed in \cite{MNSSZ23} to prove the AoN characterization of the MMSE in the planted subgraph model.
In fact, \cite{mossel2022second} proves the tightness of the fractional expectation threshold for any $H$, by bounding the limiting MMSE of the planted subgraph model corresponding to $H$ from below. This leads to the following question:
\begin{quote}
\centering 
\emph{Can one go the other direction, and use the expectation thresholds to characterize the MMSE for a large family of subgraph models?}
\end{quote}
Indeed, in this work, we prove that for large $n$, the MMSE curve is approximately given by a piecewise-constant function, whose discontinuity points are precisely modifications of the (fractional/subgraph) expectation thresholds of $H$.

\subsection{Main Contributions}
As mentioned, our main result is a characterization of the MMSE for the planted subgraph model for all  weakly dense graphs $H=H_n$ (for large enough $n$). To present the theorem, recall, as mentioned above, that the subgraph expectation threshold of any weakly dense $H$ from \cite{KK07} is (approximately) given by $\max_{J \subseteq H} n^{-v(J)/|J|}$ (see \Cref{lem:1mm_wek} for an exact statement and a proof of this simple fact). Motivated by this, we define, for any $q \in [0,1)$, the $q$-modified subgraph expectation threshold of $H$:
\begin{equation}
    \label{eq:phi-q}
   \varphi_{q} = (\varphi_{q})_n=\min\left\{ \max\left\{ n^{-\frac{|V(J) \setminus V(S)|}{|J \setminus S|}} : J \subseteq H , J \supsetneq S \right\} : S \subseteq H, |S| \le q |H| \right\}.
\end{equation}
Also set $\varphi_1 \defeq 0$.
In words, we define $\varphi_q$ as the minimum possible subgraph expectation threshold of $H \setminus S$ among all choices of $S$ with at most $q |H|$ edges.\footnote{Here, by $H \setminus S$, we refer to the ``graph cut'' constructed after we delete all edges from $S$ from $H$: see \Cref{dfn:graph_cut} for a precise statement.}

Why are the thresholds $(\varphi_q)_{q \in [0,1]}$ potentially relevant for the MMSE curve?
Assuming the validity of the subgraph version of the Kahn--Kalai conjecture, $\varphi_q$ approximately corresponds to the critical density $p$ such that for some $S \subseteq H$ with at most $q |H|$ edges, the graph $H \setminus S$ appears in $G(n,p)$ with high probability. In particular, in the planted model, this is precisely the threshold at which a copy of $H$ appears in the noise that intersects $H^*$ at some subgraph $S$ with at most $q |H|$ edges. This suggests that $\varphi_q$ could be the threshold such that, as soon as $p>\varphi_q$, it is impossible to recover more than a $q$ fraction of the edges of $H^*$ because another copy of $H$ appears in $G$ with $|H \cap H^*| \leq q |H|$. Then, using standard Bayesian techniques, the above can be equivalently phrased as $p=\varphi_q$ being the threshold at which the MMSE hits $1-q$. 

However, this line of argument cannot be applied directly to conclude the desired result. Perhaps the most crucial issue is that the subgraph version of the Kahn--Kalai conjecture remains open for a growing $H=H_n$ (even restricted to weakly dense $H$); moreover, even if proved, it is only expected to approximate $p_c(H)$ to within a multiplicative $O(\log |H|)$ factor of the true critical threshold. Despite such barriers, we are able to prove that the threshold $(\varphi_q)_{q \in [0,1]}$ indeed characterize (with no multiplicative slack) the points that the MMSE crosses the values $(1-q)_{q \in [0,1]}$ in the planted subgraph model, confirming the above intuition. 

More specifically, our main result can be informally stated as follows.

\begin{theorem}[Informal version of \Cref{thm:wek_dense} and \Cref{lem:onion-universality}(c)]\label{thm:inf_main}Let $\eps > 0$, and $H = H_n$ an arbitrary weakly dense graph. Then, for sufficiently large $n$, there exists an integer $1\leq M\leq |H|$ and thresholds $1=q_M>\cdots>q_1>q_0=0$, such that the following holds for the planted subgraph model corresponding to $H$.
    \begin{itemize}
        \item For all $i=0,1,\ldots,M-1$, if $p \in ((1+\eps)\varphi_{q_{i+1}},(1-\eps) \varphi_{q_{i}})$, it holds that
        \[\mathrm{MMSE}_n(p)=1-q_{i+1}+o(1).\]
        For $p \ge (1+\eps) \varphi_{q_0}$, $\MMSE_n(p) = 1-o(1)$.

        \item The ``discontinuity" points $(\varphi_{q_i})_{i=1,2,\ldots,M}$ can be computed in $\mathrm{poly}(n)$ time.
    \end{itemize}  
\end{theorem}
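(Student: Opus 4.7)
The plan is to reduce the MMSE characterization to a subgraph-counting question via the standard Bayesian identity
\[\mathrm{MMSE}_n(p) = 1 - \frac{1}{|H|}\E\bigl[|H^* \cap \tilde H^*|\bigr],\]
where $\tilde H^*$ is an independent sample from the posterior of $H^*$ given $G$ (which, by Bayes' rule, is uniform over all copies of $H$ in $G$). Establishing the theorem then amounts to showing that the average overlap $\E[|H^* \cap \tilde H^*|]/|H|$ is, up to $o(1)$, a step function in $p$ with jumps precisely at the thresholds $\varphi_{q_i}$, taking the value $q_{i+1}$ on the $i$-th plateau.

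To identify the breakpoints, I would construct an \emph{onion decomposition} of $H$ greedily: set $S_0 = \emptyset$, and given $S_i \subsetneq H$, let $J_i \supsetneq S_i$ be a subgraph of $H$ minimizing $|V(J_i)\setminus V(S_i)|/|J_i\setminus S_i|$ (the ``densest extension'' of $S_i$ inside $H$); then set $S_{i+1} = J_i$ and $q_i = |S_i|/|H|$. By construction, $\varphi_{q_i} = n^{-|V(J_i)\setminus V(S_i)|/|J_i\setminus S_i|}$ realizes the inner max at level $q_i$. The key structural claim---the \emph{minimax theorem} alluded to in the introduction---is that this greedy chain also realizes the outer $\min$: for each $q \in [q_i, q_{i+1})$ the minimum in the definition of $\varphi_q$ is attained at $S = S_i$, so $\varphi_q$ is piecewise constant on these half-open intervals. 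Computability is then immediate: each onion step is a densest-subgraph-with-boundary instance on $H$, solvable in polynomial time via an LP relaxation or a max-flow reduction, and there are at most $|H| = O(n^2)$ layers in the chain.

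With the onion in hand, both bounds on the average overlap follow from (carefully conditioned) first and second moment arguments in the spirit of the Kahn--Kalai program. For $p < (1-\eps)\varphi_{q_i}$, a first-moment / Markov argument applied to each candidate overlap pattern $S \subseteq H$ with $|S| \le q_i|H|$ rules out copies of $H$ in $G$ sharing at most $q_i|H|$ edges with $H^*$; combined with the chain property of the onion (which forbids achievable overlaps strictly between $q_i|H|$ and $q_{i+1}|H|$), this yields $\E[|H^* \cap \tilde H^*|] \ge q_{i+1}|H| - o(|H|)$. For $p > (1+\eps)\varphi_{q_{i+1}}$, a conditional second moment computation---conditioning on $H^*$ being planted and on a core $S_{i+1} \subseteq H^*$---produces many copies of $H$ in $G$ overlapping $H^*$ in exactly $S_{i+1}$, and these copies dominate the posterior mass, giving the matching upper bound. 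The hard part will be this conditional second moment: for a general weakly dense $H$ it is easily inflated by dense sub-subgraphs of $H \setminus S_i$, and the layered structure of the onion is precisely what keeps the relevant extension problem at each level in a well-behaved densest-subgraph regime. A secondary but non-trivial hurdle is establishing the minimax/chain property itself---a priori the optimizers of the outer $\min$ in $\varphi_q$ at different $q$ need not be nested, and proving that a single greedy chain realizes all of them (up to $o(|H|)$ error) is the heart of the structural argument.
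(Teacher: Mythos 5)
Your overall architecture matches the paper's: Nishimori's identity reduces the MMSE to an expected posterior overlap; a greedy ``onion'' peeling gives a nested chain $\emptyset = J^{(0)} \subsetneq J^{(1)} \subsetneq \cdots \subsetneq J^{(M)} = H$ with breakpoints $q_i = |J^{(i)}|/|H|$; a minimax lemma shows this single chain simultaneously realizes the inner $\max$ and the outer $\min$ in $\varphi_q$ (equivalently, that $\varphi_q$ equals the max-min form $\widetilde{\varphi}_q$ arising naturally in the possibility direction); and each peeling step is a densest-subgraph-with-boundary LP, so the whole decomposition is computable in $\mathrm{poly}(n)$. Your possibility direction (small $p$) is the paper's \Cref{lem:ease} verbatim in spirit: a union bound/first moment over copies with small overlap. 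Two small points: the paper needs the subgraph chosen at each onion step to be the \emph{maximal} density-maximizer, which is what makes the decomposition unique and the per-step densities monotone nonincreasing (\Cref{lem:maximizer-union}, \Cref{lem:monoton_onion}); and you don't actually need the onion to ``forbid intermediate overlaps'' -- you simply apply the first-moment bound with $q$ taken arbitrarily close to $q_{i+1}$ from below, exploiting that $\varphi_q$ is constant on $[q_i, q_{i+1})$.

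The genuine divergence is in the impossibility direction (large $p$), and it is exactly the place you yourself flag as ``the hard part.'' You propose a conditional second-moment argument: condition on a core $S_{i+1}\subseteq H^*$ and show many copies of $H$ meeting $H^*$ near $S_{i+1}$ appear. For a general weakly dense $H$ this Paley--Zygmund-style route is in serious trouble -- the second moment is typically blown up by dense sub-subgraphs of $H\setminus S_{i+1}$, and no amount of conditioning on the onion core alone repairs that; this is precisely the phenomenon that motivated the Kahn--Kalai/fractional expectation threshold machinery in the first place. The paper avoids the second moment entirely. It uses a ``genie'' reduction (\Cref{lem:reduction}: reveal $S^*$ as side information and pass to a planted $S^c$-graph model) followed by the \emph{planting trick} of Achlioptas--Coja-Oghlan (\Cref{lem:post-reduction-lower-bd}), as in the Bayesian proof of the spread lemma in \cite{mossel2022second}. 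The planting trick replaces lower-tail control of the partition function $Z$ with a likelihood-ratio change of measure between the planted observation $\widetilde G$ and the null $G'$, so one only needs an upper bound on a first-moment-type quantity $\sum_{\ell\ge k\delta'}\mathrm{P}_{H'}(|(H'\setminus S^*)\cap(H^*\setminus S^*)|=\ell)/p^{\ell}$ -- no second moment at all. If you try to push the conditional second moment instead, you would need either to restrict $H$ far beyond weakly dense or to reinvent the planting trick; as written, that leg of your plan does not go through.
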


Our proof is in fact able to provide an intuitive characterization of the ``critical'' subgraphs of $H^*$ that are recoverable from $G$ as one gradually decreases the noise level $p$ from one to zero, revealing that they are in fact highly structured.  The first subgraph of $H^*$ that is recoverable is (perhaps naturally) the densest one, i.e., the subgraph $J^{1}$ attaining the maximum $\max_{J \subseteq H} |J|/v(J)$, which happens at the critical threshold $p=\varphi_{0}=n^{- v(J^{1})/|J^{1}|}$. This results in the first ``jump" of the MMSE from $1$ to $1-q_1$ where $q_1=\frac{|J^{1}|}{|H|}$; in particular, if $p<\varphi_{q_1}$, a $q_1$-fraction of the edges of $H$ (corresponding to the subset $J^{(1)}:=J^{1}$) can be recovered. As we continue decreasing $p$, the next critical subgraph is the densest one after deleting the graph $J^{1}$ from $H$. This produces the next jump in the MMSE at critical probability $p=\varphi_{q_1}=\max_{J \subseteq H \setminus J^{1}} n^{-v(J)/|J|}=n^{-v(J^{2})/|J^{2}|}$ from $1-q_1$ to $1-q_2$ for $q_2=\frac{|J^{2}|+|J^{1}|}{|H|}$; in particular, if $p<\varphi_{q_2}$, a $q_2$-fraction of the edges of $H$ (corresponding to $J^{(2)}\defeq J^1 \cup J^2$) can be recovered. Continuing this ``peeling process'' of $H$ produces a sequence of these critical subgraphs $J^{(i)}, i=1,2,\ldots,M$ which we define later as the ``onion decomposition'' of $H$ (see \Cref{fig:onion} for a pictorial depiction of these thresholds, and \Cref{dfn:onion} for more precise details). We direct the reader to \Cref{cor:wek-dense-improved} and the discussion after it for more precise statements.

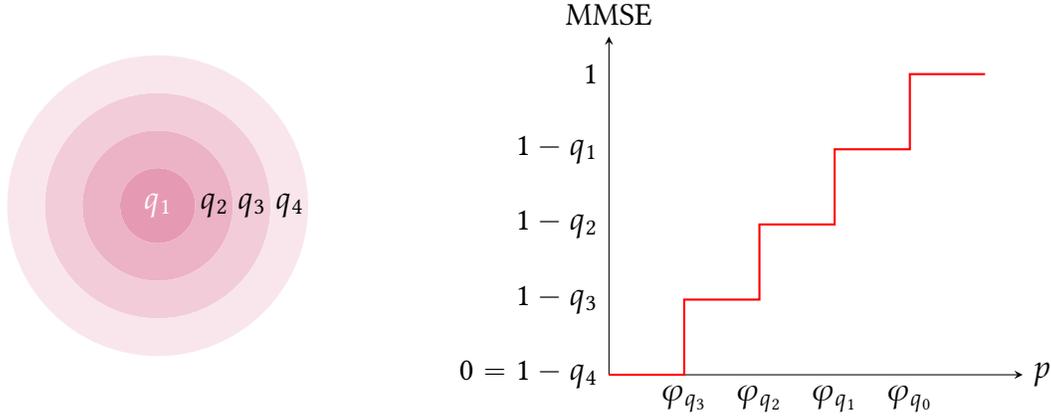
\begin{figure}

\centering

\begin{tikzpicture}[>=stealth]

  \begin{scope}[yshift=2.25cm]
    \fill[purple!40] (0,0) circle (0.5);
    \node[color=white] at (0,0) {$q_1$};

    \path[fill=purple!30,even odd rule] (0,0) circle (1) (0,0) circle (0.5);
    \node at (0.75,0) {$q_2$};

    \path[fill=purple!20,even odd rule] (0,0) circle (1.5) (0,0) circle (1);
    \node at (1.25,0) {$q_3$};

    \path[fill=purple!10,even odd rule] (0,0) circle (2) (0,0) circle (1.5);
    \node at (1.75,0) {$q_4$};
  \end{scope}

  \begin{scope}[xshift=6cm]
    \draw[->] (0,0) -- (5.5,0) node[right] {$p$};
    \draw[->] (0,0) -- (0,4.5) node[above] {MMSE};
    
    \draw[red,thick]
      (0,0)        -- (1,0)
      -- (1,1)     -- (2,1)
      -- (2,2)     -- (3,2)
      -- (3,3)     -- (4,3)
      -- (4,4)     -- (5,4);

    \node[below] at (1,0) {$\varphi_{q_3}$};
    \node[below] at (2,0) {$\varphi_{q_2}$};
    \node[below] at (3,0) {$\varphi_{q_1}$};
    \node[below] at (4,0) {$\varphi_{q_0}$};

    \node[left] at (0,0) {$0 = 1 - q_4$};
    \node[left] at (0,1) {$1 - q_3$};
    \node[left] at (0,2) {$1 - q_2$};
    \node[left] at (0,3) {$1 - q_1$};
    \node[left] at (0,4) {$1$};
  \end{scope}

\end{tikzpicture}

\caption{A pictorial representation of the onion decomposition of a graph (left) and the corresponding MMSE curve (right). Each $q_i$ represents the cumulative fraction of the graph included in all layers starting from the center till the $i$th layer; hence $q_4=1.$ The critical thresholds described above are denoted by $\varphi$, following the notation in our theorem statement below.}

\label{fig:onion}
\end{figure}

Our proof of Theorem \ref{thm:inf_main}, in addition to employing Bayesian statistical tools such as the planting trick \cite{achlioptas2008algorithmic}, is combinatorial in nature and involves a novel minimax argument (see \Cref{lem:minimax})---we are able to show that the $\min$ and $\max$ in the definition of $\varphi_q$ can be swapped, which is crucial in establishing our main result and to the best of our knowledge not implied by any standard minimax result. We believe that the onion decomposition, and the aforementioned minimax result we prove with it, may be of independent interest. We also remark that we give an efficient algorithm to compute the onion decomposition of any graph (see \Cref{lem:onion-universality}), and hence its approximate MMSE curve for large $n$. A pictorial depiction of this decomposition, and its relation to the MMSE, is shown in \Cref{fig:onion}.

Although we do not prove the subgraph Kahn--Kalai conjecture (also called the ``second" Kahn--Kalai conjecture in \cite{mossel2022second2}), our work gives a clear \emph{statistical meaning} to (variants of) the subgraph expectation thresholds from probabilistic combinatorics. For example, our main result implies that as long as $q_1=\frac{|J^{1}|}{|H|}=\Omega(1)$ for $J^{(1)}$ the densest subgraph of $H$, the original subgraph expectation threshold of $H$, $\max_{J \subseteq H} n^{-v(J)/|J| },$ is exactly (without any logarithmic multiplicative error!) the critical noise level $p$ at which the MMSE jumps from being trivial (equal to $1$), to a non-trivial value (i.e., a value strictly less than $1$). The location of the noise level at which the MMSE ceases to be trivial is a well-studied object in the literature known as the ``weak recovery'' threshold of $H$ in the planted subgraph model and there has been an intense study of the weak recovery thresholds for various planted models in the statistical physics, information theory and statistics communities (see e.g., \cite{coja2017information} and references therein).

Let us now define the weak recovery threshold in our context.
\begin{definition}
    For any $H=H_n$ and $\varepsilon \in (0,1)$ we define the $\varepsilon$-recovery recovery threshold $p_{\epsilon}=p_{\epsilon}(H)$ as
    \[p_{\epsilon}=p_{\epsilon}(H_n):=\sup\{p \in [0,1]: \MMSE_n(p) \leq 1-\epsilon\}. \]
    The planted subgraph model for $H$ is said to have a \emph{(sharp) weak recovery threshold} if for some $\epsilon_0>0$, for all $0<\epsilon<\epsilon_0$, if $n$ is sufficiently large, it holds that
    \[p_{\epsilon}=(1+o(1))p_{\epsilon_0},\]
    that is, the MMSE ``jumps" at the value $p_{\epsilon_0}$ from $1-o(1)$ to $1-\Omega(1)$. In that case, we call the sharp weak recovery threshold of $H=H_n$ to be  \[p_{\mathrm{WR}}=p_{\mathrm{WR}}(H)\defeq p_{\epsilon_0}.\]
\end{definition}

We state this result in the following informal corollary.

\begin{corollary}[Informal, based on \Cref{lem:onion-universality} and \Cref{cor:wek-dense-improved}] \label{cor:inf_main}
    Let $\eps > 0$, and $H = H_n$ an arbitrary weakly dense graph. Suppose that for $J^{(1)}=\arg \max_{J \subseteq H} |J|/v(J)$ it holds that $q_1=|J_1|/|H|=\Omega(1)$; a property referred to as $H$ being delocalized in \cite[Definition 4.3]{MNSSZ23}.  Then, the planted subgraph model for $H$ has a sharp weak recovery threshold. Moreover, this threshold $p_{\mathrm{WR}}(H)$ is equal to the subgraph expectation threshold of $H$, that is, \[p_{\mathrm{WR}}(H)=(1+o(1))\max_{J \subseteq H} n^{-v(J)/|J| }.\]  
\end{corollary}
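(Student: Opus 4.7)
The plan is to deduce the corollary directly from \Cref{thm:inf_main} by identifying the first onion threshold $\varphi_{q_0}$ with the subgraph expectation threshold of $H$, and then using the delocalization hypothesis to extract a sharp jump in the MMSE curve at that threshold.

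The first step is to unpack the variational formula \eqref{eq:phi-q} at $q = q_0 = 0$. Since the constraint $|S| \le 0 \cdot |H|= 0$ forces $S = \emptyset$, only the term $S = \emptyset$ survives in the outer minimum, giving
\[
    \varphi_{q_0} = \max_{\emptyset \neq J \subseteq H} n^{-v(J)/|J|},
\]
which is exactly the subgraph expectation threshold of $H$ appearing in the statement. Moreover, as described in the paragraph preceding \Cref{fig:onion}, the first onion layer is $J^{(1)} = \arg\max_{J \subseteq H}|J|/v(J)$ with $q_1 = |J^{(1)}|/|H|$, so the delocalization hypothesis is precisely the lower bound $q_1 = \Omega(1)$.

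Next, I would set $\epsilon_0 := q_1/4 = \Omega(1)$ and fix any $\epsilon \in (0,\epsilon_0)$. For any $\delta > 0$, \Cref{thm:inf_main} supplies two complementary regimes. For the upper bound on $p_\epsilon$: if $p \ge (1+\delta)\varphi_{q_0}$, then $\MMSE_n(p) = 1 - o(1) > 1-\epsilon$ for large $n$, forcing $p_{\epsilon} \le (1+\delta)\varphi_{q_0}$. For the lower bound: if $p \in \bigl((1+\delta)\varphi_{q_1},(1-\delta)\varphi_{q_0}\bigr)$, then $\MMSE_n(p) = 1 - q_1 + o(1) \le 1 - 2\epsilon_0 < 1-\epsilon$, yielding $p_\epsilon \ge (1-\delta)\varphi_{q_0}$ whenever this interval is non-empty. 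Combining the two regimes and letting $\delta = \delta_n \to 0$ (slowly, if necessary) gives $p_{\epsilon} = (1+o(1))\varphi_{q_0}$ uniformly for $\epsilon \in (0,\epsilon_0)$, which simultaneously proves the sharp weak recovery property $p_{\epsilon} = (1+o(1))p_{\epsilon_0}$ and the identification $p_{\mathrm{WR}}(H) = (1+o(1))\max_{J \subseteq H}n^{-v(J)/|J|}$.

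The main technical subtlety is ensuring non-emptiness of the interval $\bigl((1+\delta)\varphi_{q_1},(1-\delta)\varphi_{q_0}\bigr)$ for some $\delta = \delta_n \to 0$. When $q_1 = 1$, by convention $\varphi_1 = 0$ and the interval is trivially non-empty. When $q_1 < 1$, both thresholds are distinct values of the form $n^{-v(J)/|J|}$ for subgraphs $J \subseteq H$ whose exponents are rationals with denominator at most $|H|$; thus $\varphi_{q_1}/\varphi_{q_0} \le n^{-1/|H|^2} < 1$, and any $\delta_n \to 0$ with $\delta_n \gg \log n / |H|^2$ will do, which is always achievable. This completes the deduction.
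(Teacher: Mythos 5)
Your first step and your upper bound on $p_\epsilon$ are correct, and they match the paper's route: with $S=\emptyset$, the outer minimum in \eqref{eq:phi-q} collapses and $\varphi_{q_0}=\varphi_0=\max_{\emptyset\neq J\subseteq H}n^{-v(J)/|J|}$ is the subgraph expectation threshold; then \Cref{thm:wek_dense}(a) with $q=0$ gives $p_\epsilon\le(1+o(1))\varphi_0$.

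The lower bound, however, has a genuine gap. You route through the interval $\bigl((1+\delta)\varphi_{q_1},(1-\delta)\varphi_{q_0}\bigr)$ and try to certify its non-emptiness via the rationality of the exponents. First, the inequality you write is backwards: you need $\frac{1+\delta}{1-\delta}<\varphi_{q_0}/\varphi_{q_1}$, and since $\varphi_{q_0}/\varphi_{q_1}\ge n^{1/|H|^2}\approx 1+\log n/|H|^2$ when $|H|^2\gg\log n$, this forces $\delta\ll\log n/|H|^2$, not $\gg$. More fundamentally, the rational-gap bound only gives $\varphi_{q_1}/\varphi_{q_0}\le n^{-1/|H|^2}$, which is $1-o(1)$ whenever $|H|=\omega(\sqrt{\log n})$; in that regime the interval is empty for every fixed $\delta>0$, and the cited theorems (\Cref{thm:wek_dense}, or the informal \Cref{thm:inf_main}) only apply at fixed multiplicative separation, not at $\delta_n\to 0$. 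This is precisely the role of the ``jump'' hypothesis \eqref{eq:jump} in \Cref{cor:wek-dense-improved} --- which the corollary you are proving does \emph{not} assume, and which is not automatic for general weakly dense $H$.

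The fix is that you do not need the interval at all. By \Cref{lem:onion-universality}(b), $\varphi_q$ is constant and equal to $\varphi_0$ on the entire interval $q\in[0,q_1)$. Since $q_1=\Omega(1)$, there is a constant $c>0$ with $q_1\ge c$ for all large $n$; fix $q\defeq c/2$, a constant in $(0,q_1)$, so $\varphi_q=\varphi_0$. Then for any fixed $\delta>0$ and $p\le(1-\delta)\varphi_0=(1-\delta)\varphi_q$, \Cref{thm:wek_dense}(b) directly gives $\MMSE_n(p)\le 1-c/2+o(1)$, so $\MMSE_n(p)\le 1-\epsilon$ for every $\epsilon<\epsilon_0\defeq c/4$ and $n$ large. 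Hence $p_\epsilon\ge(1-\delta)\varphi_0$ for every constant $\delta>0$, i.e.\ $p_\epsilon\ge(1-o(1))\varphi_0$, with no reference to $\varphi_{q_1}$ and no non-emptiness condition. Combined with your upper bound this gives $p_\epsilon=(1+o(1))\varphi_0$ uniformly over $\epsilon\in(0,\epsilon_0)$, which is the sharp weak recovery claim.
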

 
Notably, the weak recovery threshold has also been called ``the condensation threshold'' of the model in the literature of statistical physics \cite{coja2017information}, due to an underlying I-MMSE relation linking the discontinuities of the MMSE and the first-order discontinuities of the free energy of the model. Interestingly, no such I-MMSE relation is currently known for the planted subgraph model for all $H$, despite multiple partial results for specific choices of $H$. To our knowledge, the best such general result is a one-sided I-MMSE relation proven in \cite[Lemma 5.9]{MNSSZ23}. 

\parhead{Beyond (weakly dense) graphs $H=H_n$: planting arbitrary monotone properties.} Recall that the theory of \emph{fractional} expectation thresholds has been established in \cite{FKP21} (as conjectured in \cite{Tal10}) to correctly predict the critical threshold $p$ for \emph{any} $H$ up to a multiplicative logarithmic factor. More generally, it predicts the critical threshold $p$ at which any monotone property is satisfied in an instance of $\mathrm{Bernoulli}(p)^{\otimes N}$ (again, up to a multiplicative logarithmic factor). Given this, it is natural to ask whether our result can be generalized to understand the limiting MMSE curve when planting arbitrary monotone properties in $\mathrm{Bernoulli}(p)^{\otimes N}$ noise.

We succeed in doing so, albeit at the cost of a constant multiplicative loss in the threshold. Our results hold under a worst-case prior over the planted element of the property, a standard setting known as the study of minimax estimation rates in statistics. As a special case, our result applies to the planted subgraph model for \emph{any} subgraph $H=H_n$ and yields the order of the density $p$ under which the MMSE of the planted subgraph model for $H$ crosses any specific value in $(0,1)$. We direct the reader to Section \ref{subsec:general} for details on this general case.

The proof of this result is based on a modification of the fractional expectation thresholds (analogous to how $\varphi_q$ is a modification of the subgraph expectation threshold), a strong duality trick first observed by Talagrand \cite{Tal10}, and finally a modification of the Bayesian proof of the spread lemma in \cite{MNSSZ23}. We refer the reader to \Cref{thm:gen} for a precise statement of this result.

\parhead{Related work.} The limiting MMSE of various high dimensional statistical models has been analyzed in a number of recent influential works. Notable characterizations of the limiting MMSE include the exact computation for the spiked rank-one matrix model under Gaussian noise \cite{dia2016mutual, lelarge2017fundamental} (conjectured in \cite{lesieur2015mmse}) and the random linear estimation problem with a Gaussian feature matrix \cite{montanari2006analysis,reeves2019replica} (conjectured in \cite{tanaka2002statistical, guo2005randomly}). In many cases, the approach begins with a non-rigorous yet remarkably precise calculation of the model's free energy via the ``replica" or ``cavity" methods, which also yields a conjectured exact formula for the limiting MMSE. 
In some cases, such as those mentioned above, the validity of these conjectured formulas has later been rigorously proven. However, our techniques fundamentally differ from those used here, and operate in a setting where typical assumptions in statistical physics literature do not hold. For instance, for our main result \Cref{thm:inf_main}, the planted subgraph $H=H_n$ is only assumed to be weakly dense for each $n$, and in particular no product-like structure is assumed on its entries as is often the case for statistical physics techniques. Moreover, for our general results on monotone properties (\Cref{thm:gen}), no assumption whatsoever is made on the planted discrete structure other than it being of a known cardinality.

In another related line of work, the goal is to prove properties of the limiting MMSE curve for specific planted models. Notable examples include \cite{reeves2020all} that understood the limiting MMSE curve in sparse linear regression and proved the AoN phenomenon, and \cite{ding2023planted, gaudio2025allsomethingnothingphasetransitionsplanted} that studied the MMSE for the planted matching problem. Our work again differs from these results in that it applies to a larger family of priors with much less structure---for instance, the planted matching model is a special case of our planted subgraph model with $H$ a perfect matching so \Cref{thm:gen} is directly applicable (as the perfect matching is not a weakly dense graph, we cannot apply the stronger \Cref{thm:wek_dense}). Moreover, our proof crucially differs in the proof techniques we employ to the previous works. Most of the above works are based on involved second moment methods (e.g., in 
 \cite{reeves2020all}) or algorithmic construction arguments (e.g., in \cite{ding2023planted}). Yet, in this work we analyze the limiting MMSE curves by leveraging Bayesian techniques, such as the planting trick \cite{achlioptas2008algorithmic}, alongside state-of-the-art combinatorics results and arguments using the structure of the (fractional) expectation thresholds, such as the duality trick of Talagrand \cite{Tal10}. We hope that the combinatorics techniques introduced in this work will be useful in understanding more minimax rates or MMSE curves in the high-dimensional statistics literature.

\newcommand{\NN}{\mathbb{N}}
\newcommand{\maxi}{\text{maximize}\quad}
\newcommand{\subjto}{\text{subject to}\quad}
\newcommand{\maxst}[2]{\maxi \;\,& #1 \\ \subjto &  #2}

\section{Getting started}

\label{sec:notation}

We now define several notions that will be useful in presenting and proving our main results below.

\subsection{Graph Theory} We start by introducing a few definitions from graph theory.

First, as mentioned in the introduction, in this paper we identify a graph $T$ with its edge set $T\subseteq \binom{[n]}{2}$. Moreover, its vertex set $V(T)$ is the set of all vertices in $K_n$ adjacent to at least one edge of $T$. Hence, we use the notations $|T|$ to refer to the number of edges in $T$ and $v(T)\defeq|V(T)|$ to refer to the number of vertices in $T.$ Finally, we define it's \emph{density} as $\rho(T)\defeq|T|/v(T).$

We now define the notion of a weakly dense graph.
\begin{definition}\label{dfn:weak_dense}
   Consider a sequence of graphs $H=H_n, n \in \mathbb{N}$. We say that a graph $H=H_n$ is weakly dense if 
   \[
   \liminf_n \frac{|H_n|}{v(H_n) \log v(H_n)}=+\infty.
   \]
\end{definition}We often simplify notation in this work and refer to $H=H_n$ as a graph, instead of a sequence of graphs indexed by $n$.

Now, we introduce a non-standard definition in graph theory, which we term a ``graph-cut," which can intuitively be viewed as a graph ``modulo'' a specific subset of its vertices. This definition will be useful in understanding the properties of the onion decomposition of a graph $H$ (see \Cref{sec:duality}).

\begin{definition}\label{dfn:graph_cut}
    A ``graph-cut'' on $n$ vertices is defined by a triplet $J = (V,S,E)$, where $S \subseteq V \subseteq [n],$ and
    \[ E \subseteq K_V \setminus K_S=\left\{ \{u,v\} : u,v \in V \text{ and at most one of $u,v$ is in $S$} \right\}. \]
    We further denote its number of edges $|J| \defeq |E|$, its number of vertices $v(J) \defeq |V \setminus S|$, and if $S \subsetneq V$, its density 
     \begin{align}\label{def:rho}
        \rho(J) \defeq \frac{|J|}{v(J)}=\frac{|E|}{|V \setminus S|}.
    \end{align}
    If $V\setminus S = \emptyset,$ we define $\rho(J):= +\infty,$ using the convention that $\frac{1}{\rho(J)}=\frac{1}{\infty}=0.$
    Note that any graph can trivially be considered a graph-cut by choosing $S = \emptyset$.
\end{definition}

Towards succinctness, given a graph $T_1$ and subgraph $T_2 \subseteq T_1$, we use $T_2|T_1$ to denote the graph-cut $(V(T_1),V(T_2),T_1\setminus T_2)$. In particular,
\begin{align}\label{def:rho_2}
    \rho(T_1|T_2) \defeq \frac{|T_1|-|T_2|}{|V(T_1) \setminus V(T_2)|}.
\end{align}





\subsection{Monotone Properties and Expectation Thresholds }\label{sec:exp_prelims}
Now we turn to some required background from combinatorics. Let $\calX$ be a finite universe of points. We start with defining a monotone property.

\begin{definition}
  A monotone property $\tilde{A}$ in the finite universe $\mathcal{X}$ is a collection of subsets of $\mathcal{X},$ such that for any $A \in \tilde{A}$ if $A \subseteq B$ then $B \in \tilde{A}.$ An element $A \in \tilde{A}$ is called a minimal element of $\tilde{A}$ if there is no $A' \in \tilde{A}$ with $A' \subsetneq A.$
\end{definition}
Observe that a monotone property $\tilde{A}$ is fully characterized by the set of minimal elements of $\tilde{A}$. We can define the fractional expectation threshold of a monotone property as in \cite{Tal10}.

\begin{definition}[Fractional expectation threshold]\label{dfn:frac_exp}
Fix any monotone property $\tilde{A}$ in a finite universe $\mathcal{X}$. We define
\[
p_{\mathrm{FE}}(\tilde{A})\defeq  \max_{w \in W} \left\lbrace \calE_w^{-1}(1/2) \right\rbrace,
\]
where $\calE_w(p)\defeq\sum_{T \subseteq \mathcal{X} } w(T)p^{|T|},$
and $W$ is the set of all ``fractional covers'' $w$ which are functions $w: 2^{\mathcal{X}} \rightarrow [0,1]$ such that
for any minimal element $A$ of $\tilde{A}$, $\sum_{T \subseteq A} w(T) \geq 1$.

\end{definition}

It turns out that that the fractional expectation threshold is a natural lower bound for the critical probability $p_c(\tilde{A})$ (via a relatively simple ``fractional" union bound argument) for the property of interest to be satisfied under the product $\Ber(p)$ measure on $\mathcal{X}$ \cite{Tal10}. Importantly, because of its fractional nature, a strong duality trick observed by Talagrand in \cite{Tal10} leads to a representation of $p_{\mathrm{FE}}(\tilde{A})$ in terms of so-called ``spread" measures on the minimal elements of $\tilde{A}$ \cite{alweiss2020improved}. Then, the breakthrough work \cite{FKP21} used this representation and proved that  $p_{\mathrm{FE}}(\tilde{A})$ also upper bounds $p_c(\tilde{A})$, up to a logarithmic multiplicative factor. As the critical probability $p_c(\tilde{A})$ is not the focus of this work, we refer the interested reader to \cite{FKP21} for more details. We only highlight that the same duality trick is in fact crucial in our work in order to obtain our general connection between minimax rates and variants of the fractional expectation thresholds in \Cref{sec:general}.

\section{Main Results I: Planting a Weakly Dense $H$}
\label{sec:non-sparse}



\subsection{Characterizing the MMSE curve via the $\varphi_q$ thresholds}


We start with our first and main result in this section. For any $q \in [0,1)$, we identify the noise level $p$ (up to $1+o(1)$ error) at which the MMSE curve crosses the threshold $1-q$. Recall the definition of $\varphi_q$ from \eqref{eq:phi-q}.

\begin{theorem}\label{thm:wek_dense}
   Consider any (sequence of) weakly dense $H=H_n$ and $q \in [0,1)$. Then, for the planted subgraph model corresponding to $H$, the following holds as $n$ grows to infinity.
  \begin{itemize}
        \item[(a)] If $\liminf_n p/\varphi_q > 1 $, then 
        \[
       \liminf_n \MMSE_n(p) \geq 1-q.
       \]
       \item[(b)] If $\limsup_n p/\varphi_q <1$, then
       \[
      \limsup_n \MMSE_n(p) \leq 1-q.
        \]
   \end{itemize}
   In particular, for any constant $q \in [0,1],$ there exists some sequence of noise levels $p=p_n$ satisfying \[\lim_n p/\varphi_q=1\] for which it holds \[\lim_n \MMSE_n(p)=1-q.\] 
\end{theorem}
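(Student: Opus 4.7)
The plan is to reduce the MMSE to the expected overlap between two conditionally i.i.d.\ replicas drawn from the posterior, and then analyze this overlap via a Bayesian-tilted representation together with first- and second-moment estimates on subgraph counts. Concretely, I would start from the standard two-replica identity
\[
\MMSE_n(p) = 1 - \frac{\E[|H^* \cap H^{**}|]}{|H|},
\]
where $H^{**}$ is an independent sample from the posterior $\Pr[\,\cdot \mid G]$, equivalently a uniformly random copy of $H$ contained in $G$. This turns part (a) into an upper bound $\E[|H^* \cap H^{**}|] \leq q|H| + o(|H|)$, and part (b) into a matching lower bound. A direct Bayesian calculation then yields the tilted representation
\[
\Pr[H^* = A,\; H^{**} = B] \;\propto\; p^{-|A \cap B|}
\]
up to a correction from the random normalization $N_G := \#\{\text{copies of } H \text{ in } G\}$, which is controlled on a high-probability good event. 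Grouping the partition function $Z_A := \sum_B p^{-|A \cap B|}$ by the isomorphism type $S$ of the edge-intersection $A \cap B$ shows that $Z_A$ is dominated by the $S \subseteq H$ maximizing $n^{v(H)-v(S)} p^{-|S|}$.

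For part (b), assume $\limsup_n p/\varphi_q < 1$. By the definition of $\varphi_q$, for \emph{every} $S \subseteq H$ with $|S| \leq q|H|$ there exists a supergraph $J \supseteq S$ in $H$ with $n^{v(J)-v(S)} p^{|J|-|S|}$ exponentially small in $|J|-|S|$; that is, a ``bottleneck'' subgraph of $H$ that fails to appear in the noise extending any fixed embedding of $S$ in $H^*$. A first-moment union bound over such $S$ (and its embeddings in $H^*$) then shows that with high probability no copy of $H$ in $G$ intersects $H^*$ in at most $q|H|$ edges. The weakly dense hypothesis $|H| = \omega(v(H) \log v(H))$ is precisely what is needed to absorb the combinatorial count of candidates $S$ in the union bound. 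Consequently $|H^* \cap H^{**}| > q|H|$ with high probability, giving the desired lower bound on $\E[|H^* \cap H^{**}|]$.

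For part (a), assume $\liminf_n p/\varphi_q > 1$ and let $S^*$ achieve the minimum defining $\varphi_q$, so that $|S^*| \leq q|H|$ and \emph{every} $J \supseteq S^*$ satisfies $n^{v(J)-v(S^*)} p^{|J|-|S^*|} \to \infty$. A second-moment argument on the number of extensions of a fixed embedding of $S^*$ in $H^*$ to a full copy of $H$ in $G$ shows both that many such extensions exist and that their weighted contribution $p^{-|S^*|} \cdot (\text{count})$ dominates $Z_{H^*}$, while any $S$ with $|S| > |S^*|$ turns out to be strictly subdominant. This forces the posterior of $H^{**}$ to concentrate on copies whose intersection with $H^*$ is (a copy of) $S^*$, giving $\E[|H^* \cap H^{**}|] \leq |S^*| + o(|H|) \leq q|H| + o(|H|)$. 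The minimax swap established in \Cref{lem:minimax} is used here to ensure that the optimizer $S^*$ can be chosen uniformly across typical $H^*$, which is required for the argument to succeed in expectation.

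The main technical obstacle is the second-moment analysis in part (a): one must bound the variance of the rooted extension count uniformly over all supergraphs $J \supseteq S^*$, while simultaneously controlling the $N_G$ correction to the Bayesian identity; the minimax identity is exactly what decouples these two uniformities. Finally, the ``in particular'' conclusion follows by combining parts (a) and (b) with the continuity and monotonicity of $\MMSE_n(p)$ in $p$: an intermediate value argument produces a sequence $p_n$ with $p_n/\varphi_q \to 1$ along which the staircase-shaped asymptotic MMSE realizes $\MMSE_n(p_n) \to 1-q$ across its jump at $p = \varphi_q$.
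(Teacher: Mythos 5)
Your high-level structure matches the paper's: reduce the MMSE to the posterior overlap via the Nishimori identity, prove the two directions separately, and invoke a minimax swap. But there are two concrete gaps in how you execute this, and a third point of confusion about the role of the minimax lemma.

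First, in part (b) you union-bound over all $S \subseteq H$ with $|S| \leq q|H|$, choosing a per-$S$ bottleneck $J_S$. There are potentially $2^{|H|}$ such $S$, and the weakly-dense hypothesis $|H| = \omega(v(H)\log v(H))$ does \emph{not} absorb this count --- it only absorbs $v(H)^{O(v(H))} = e^{o(|H|)}$ factors (automorphisms and embedding counts), not $2^{|H|}$. The paper avoids this by fixing a single optimizing $J^*$ (the $J$ achieving the outer max in the max-min threshold $\wt{\varphi}_q$) and observing that any ``bad'' copy $H'$ of $H$ in $G$ already contains a bad copy of $J^*$; the union bound is then only over the $|H|+1$ possible overlap sizes $\ell$. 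This crucially requires the max-min ordering of the quantifiers, i.e.\ the threshold $\wt{\varphi}_q = \max_J \min_{S \subseteq J}(\cdot)$, not $\varphi_q = \min_S \max_{J \supseteq S}(\cdot)$ as you use. Your proof applies the wrong formula in the direction for which it is not designed.

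Second, in part (a) you propose a \emph{direct} second-moment argument to show that ``many extensions'' of $S^*$ to copies of $H$ exist in $G$. This is precisely the argument that is known to fail for unbalanced $H$ near its threshold --- the failure of first/second moment methods to match $p_c$ is the whole reason the expectation-threshold theory exists in the first place, and inherits a multiplicative $\log|H|$ gap at best. The paper avoids this entirely: Lemma~\ref{lem:post-reduction-lower-bd} never shows $Z$ is large in the null model. Instead it works in the planted model (where $Z \geq 1$ trivially since $H^* \setminus S^*$ is always present), uses the planting trick \cite{achlioptas2008algorithmic} to change measure from the planted to the null model, and bounds only the \emph{ratio} of high-overlap mass to total mass. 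This is a fundamentally different, and much weaker, requirement than establishing a second-moment concentration for $Z$.

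Third, your stated role for the minimax swap (``to ensure that the optimizer $S^*$ can be chosen uniformly across typical $H^*$'') does not make sense: $S^*$ in the definition of $\varphi_q$ is a subgraph of the abstract graph $H$, not of the random $H^*$, and no such uniformity issue arises. The minimax lemma's actual role is to reconcile the two thresholds that appear naturally in the two directions of the proof: the first-moment argument for recoverability gives a bound at $\wt{\varphi}_q$ (max-min), while the genie / data-processing / planting-trick argument for impossibility gives a bound at $\varphi_q$ (min-max). The identity $\wt{\varphi}_q = \varphi_q$ is a nontrivial combinatorial fact, proved via the onion decomposition, and it is the piece that makes the two one-sided bounds meet exactly. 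Without it your two directions would address different $p$-ranges and the theorem would not close.

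The closing ``in particular'' claim via monotonicity of the MMSE plus an intermediate-value / diagonalization argument is correct and matches the paper.
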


The proof of the theorem is deferred to \Cref{sec:main_steps}.

\begin{remark}[The MMSE curve using the discontinuities of $\varphi_q$]\label{rem:MMSE}

\Cref{thm:wek_dense} can be used to extract non-trivial structure for the $\MMSE_n(p)$ curve from the behavior of the points $(\varphi_q)_{q \in [0,1)}$.
Observe that $\varphi_q$ is a piecewise constant function of $q,$ since for any $\epsilon=\epsilon(H) >0$ small enough, $\varphi_q = \varphi_{q+\eps}$.
For example, this holds if $\lfloor (q+\epsilon) |H| \rfloor =\lfloor q |H|\rfloor$ since in both these cases the same collection of subsets $S \subseteq H$ is considered inside the minimum operation in the definition of $\varphi_q$.

This implies that one can choose the minimal possible integer $0 \leq M \leq |H|$ for which there exist points $1=q_M>q_{M-1}>\ldots>q_1>q_0=0$ such that $\varphi_q$ is constant on the intervals $[q_{i+1},q_{i}), i=0,1,\ldots,M-1$. In terms of the MMSE, \Cref{thm:wek_dense} implies that if for some $i=0,1,\ldots,M-1$ we have $\varphi_{q_{i}}=(1+\Omega(1))\varphi_{q_{i+1}}$, then for any $\delta>0$ if $p$ satisfies
\[(1+\delta)\varphi_{q_{i+1}} \leq p \leq (1-\delta)\varphi_{q_{i}}\]
then
\[ \MMSE_n(p)=1-q_{i+1}+o(1), \]
and if $p \ge (1+\delta) \varphi_{q_0}$, $\MMSE_n(p) = 1-o(1)$.

Indeed, if $p \leq (1-\delta)\varphi_{q_{i}}$ then for any $\epsilon>0$ we have $p \leq (1-\delta)\varphi_{q_{i+1}-\epsilon}$ and therefore $\MMSE_n(p) \leq 1-q_{i+1}+\epsilon+o(1)$ by \Cref{thm:wek_dense}(b). Since $\epsilon>0$ can be made arbitrarly small, we conclude $\MMSE_n(p) \leq 1-q_{i+1}+o(1)$. For the other direction, we have $(1+\delta)\varphi_{q_{i+1}} \leq p$ and thus $\MMSE_n(p) \geq 1-q_{i+1}-o(1)$ again by \Cref{thm:wek_dense}(a) and the fact that $\MMSE_n(p)$ is a non-increasing function of $p$. The result follows. The proof when $p \ge (1+\delta) \varphi_{q_0}$ is analogous.



\end{remark}

Based on \Cref{rem:MMSE}, to further understand the MMSE curve one needs to understand the discontinuity points of the piecewise constant function, $(\varphi_q)$. At first sight, $\varphi_q$ is a minimax problem over exponentially many subgraphs, suggesting that such an analysis would be very challenging. Despite this, our next result is a full combinatorial characterization of $\varphi_q$ based on a peeling process which we term the \textit{onion decomposition} of $H$.

\subsection{Simplifying the $\varphi_q$ thresholds via the onion decomposition of $H$}
We start with formally defining the onion decomposition here. Recall from \Cref{dfn:graph_cut} that $\rho(J_1|J_2) \defeq \frac{|J_1\setminus J_2|}{|V(J_1)\setminus V(J_2)|}$ for any graphs $J_2\subseteq J_1.$ 

\begin{definition}[Onion decomposition of $H$]\label{dfn:onion}
  Let $H=H_n$ be an arbitrary graph $H=H_n$. Consider the following procedure generating an increasing sequence of subgraphs $J^{(0)}, J^{(1)},\cdots$ of $H$.
  
  \begin{enumerate}[label=(\roman*)]
      \item Set $J^{(0)} = \emptyset$.
      \item For each $t>0,$ let $J^{(t)}$ be a maximal subgraph in $\arg\max_{J \supsetneq J^{(t-1)}}\rho(J | J^{(t-1)})$\footnote{Maximality here means that there is no $J' \supsetneq J^{(t)}$ such that $J'$ also maximizes the density $\rho(J | J^{(t-1)})$.}.
      \item If $J^{(t)} = H$, stop.
  \end{enumerate}
  Let $M=M(H) \leq |H|$ be the number of steps required until the above process terminates. We refer to the outcome of this process, $J^{(t)},t=0,1,2,\ldots,M$, as the \emph{onion decomposition} of $H$.
\end{definition}
In words, the decomposition proceeds by setting $J^{(1)}$ as the densest subgraph of $H,$ then removes $J^{(1)}$ and denotes by $J^{(2)}$ the new densest subgraph, and so on. 

\begin{remark}Observe for intuition that if the graph is balanced \cite{ER60}, that is $\max_{J \subseteq H} |J|/v(H)=|H|/V(H),$ then $M=1$ and $J^{(0)}=\emptyset, J^{(1)}=H.$
\end{remark}

\begin{remark}
    Note that we refer to \Cref{dfn:onion} as \emph{the} onion decomposition of $H$. This is because, as stated in \Cref{lem:onion-universality} below, the choice of $J^{(t)}$ turns out to be unique for all $t.$
\end{remark}
The following key combinatorial lemma establishes an important relation between the onion decomposition and the function $(\varphi_q)_{q \in [0,1]}$.

\begin{restatable}{theorem}{onionuniversality}
    \label{lem:onion-universality}
    For any graph $H=H_n$, let $J^{(t)},t=0,1,2,\ldots,M$ be its onion decomposition from \Cref{dfn:onion}. The following holds. 
    \begin{enumerate}[label=(\alph*)]
        \item For any $t=0,1,2,\ldots,M-1$, given $J^{(t)}$, the choice of $J^{(t+1)}$ is unique in step (ii) of the procedure. That is, the onion decomposition of a given graph $H$ is unique.
        \item For any $q \in [0,1)$, let $t=t(q)$ be such that $|J^{(t)}|/|H| \le q<|J^{(t+1)}|/|H|$. Then 
        \[
        \varphi_q=n^{-\frac{|V(J^{(t+1)}) \setminus V(J^{(t)})|}{|J^{(t+1)} \setminus J^{(t)}|}} = n^{-\frac{1}{\rho(J^{(t+1)}|J^{(t)})}},
        \]
        i.e., the pair $(S=J^{(t)}, J=J^{(t+1)})$ is a minimax optimal pair in the definition of $\varphi_q$ in \eqref{eq:phi-q}.
        \item The onion decomposition of $H$, and in particular, the function $(\varphi_q)_{q \in [0,1]}$ can be computed in time which is polynomial in $|H|$.
    \end{enumerate}
\end{restatable}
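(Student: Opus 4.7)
For part~(a), I would first establish that the set-function $\phi_\rho(U) \defeq |U| - \rho v(U)$ on subgraphs $U\subseteq H$ is supermodular for every $\rho\ge 0$. A direct calculation gives
\[
\phi_\rho(A\cup B)+\phi_\rho(A\cap B)-\phi_\rho(A)-\phi_\rho(B) = \rho\bigl(|V(A)\cap V(B)| - v(A\cap B)\bigr) \ge 0,
\]
using $v(A\cup B) = v(A) + v(B) - |V(A)\cap V(B)|$ and $V(A\cap B)\subseteq V(A)\cap V(B)$. Writing $\rho_t \defeq \rho(J^{(t+1)}|J^{(t)})$, a subgraph $J\supsetneq J^{(t)}$ maximizes $\rho(J|J^{(t)})$ if and only if $\phi_{\rho_t}(J)=\phi_{\rho_t}(J^{(t)})$; since the maximizers of a supermodular function are closed under union, they admit a unique inclusion-maximum element, which is precisely the $J^{(t+1)}$ selected in step~(ii). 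This proves part~(a).

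For part~(b), fix $t=t(q)$. The inequality $\varphi_q\le n^{-1/\rho_t}$ follows immediately from the feasibility of the pair $(J^{(t)}, J^{(t+1)})$ in the definition of $\varphi_q$ together with the defining relation $\max_{J\supsetneq J^{(t)}}\rho(J|J^{(t)})=\rho_t$. For the reverse direction, I must show that for every $S\subseteq H$ with $|S|\le q|H|$ the candidate $J=S\cup J^{(t+1)}$ achieves $\rho(J|S)\ge \rho_t$. Writing $a \defeq |J^{(t+1)}\cap S|$ and $b\defeq|V(J^{(t+1)})\cap V(S)|$, and using the identity $|J^{(t+1)}|-\rho_t v(J^{(t+1)}) = |J^{(t)}|-\rho_t v(J^{(t)})$ (from the definition of $\rho_t$), this goal rearranges to
\[
a - \rho_t b \;\le\; |J^{(t)}| - \rho_t v(J^{(t)}), \qquad (\star)
\]
which I would prove by introducing the auxiliaries $a^* \defeq |J^{(t)}\cap S|$ and $b^* \defeq |V(J^{(t)})\cap V(S)|$ and proceeding in two steps. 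First, applying onion-maximality at step $t+1$ to $T' \defeq (J^{(t+1)}\cap S)\cup J^{(t)}$, which satisfies $J^{(t)}\subseteq T'\subseteq J^{(t+1)}$, yields $\rho(T'|J^{(t)})\le\rho_t$; unpacking (using $V(T')\setminus V(J^{(t)})\subseteq (V(J^{(t+1)})\cap V(S))\setminus V(J^{(t)})$) gives $a-a^*\le\rho_t(b-b^*)$, i.e., $a-\rho_t b\le a^*-\rho_t b^*$. Second, since $v(J^{(t)}\cap S)\le b^*$, it then suffices to establish the principal-partition-type claim $\phi_{\rho_t}(U) \le \phi_{\rho_t}(J^{(t)})$ for every $U\subseteq J^{(t)}$, applied to $U=J^{(t)}\cap S$.

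I expect this principal-partition claim to be the main obstacle. I would prove it by induction on $t$, with trivial base case $J^{(0)}=\emptyset$. In the inductive step, supermodularity gives
\[
\phi_{\rho_t}(U) \le \phi_{\rho_t}(U\cup J^{(t-1)}) + \phi_{\rho_t}(U\cap J^{(t-1)}) - \phi_{\rho_t}(J^{(t-1)}).
\]
The inductive hypothesis applied to $U\cap J^{(t-1)}\subseteq J^{(t-1)}$, converted from $\rho_{t-1}$ to $\rho_t$ using $v(U\cap J^{(t-1)})\le v(J^{(t-1)})$ together with $\rho_{t-1}>\rho_t$, yields $\phi_{\rho_t}(U\cap J^{(t-1)}) \le \phi_{\rho_t}(J^{(t-1)})$. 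Analogously, onion-maximality at step $t$ gives $\phi_{\rho_{t-1}}(U\cup J^{(t-1)}) \le \phi_{\rho_{t-1}}(J^{(t-1)}) = \phi_{\rho_{t-1}}(J^{(t)})$, which converts to $\phi_{\rho_t}(U\cup J^{(t-1)}) \le \phi_{\rho_t}(J^{(t)})$ via $v(U\cup J^{(t-1)}) \le v(J^{(t)})$—where the containment $U\subseteq J^{(t)}$ is crucial. Combining closes the induction and yields $(\star)$. The delicate point is that both $\rho_{t-1}\to\rho_t$ conversions require the relevant vertex-count differences to have the correct sign, which in turn forces the restriction $U\subseteq J^{(t)}$.

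Finally, for part~(c), computing each $J^{(t+1)}$ amounts to finding the inclusion-maximum optimizer of the generalized densest-subgraph problem $\max_{J\supsetneq J^{(t)}}\rho(J|J^{(t)})$, which is polynomial-time solvable via standard parametric max-flow techniques on an auxiliary network (supermodularity guarantees that the inclusion-maximum optimizer is recoverable). Iterating this at most $|H|$ times produces the full onion decomposition, and hence via part~(b) the function $q\mapsto\varphi_q$, in time polynomial in $|H|$.
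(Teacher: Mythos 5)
Your proof is correct, and it takes a genuinely different route from the paper's. The paper works directly with the nonlinear density $\rho(\cdot\,|\,\cdot)$, developing a small library of direct density inequalities (\Cref{lem:density-inequality-modularity}, \Cref{lem:balanced-density-inequality}, \Cref{lem:maximizer-union}, \Cref{lem:monoton_onion}); its proof of part~(b) extracts an arbitrary optimizer $(S^*,J^*)$, locates the largest $t$ with $J^{(t)}\subseteq S^*$, and chains the lemmas to force equality with $\rho(J^{(t+1)}|J^{(t)})$, finishing with the monotonicity of the onion densities. You instead linearize via the Lagrangian set-function $\phi_\rho(U)=|U|-\rho\,v(U)$, observe that $\phi_\rho$ is supermodular (the check $v(A\cap B)\le |V(A)\cap V(B)|$ is precisely the ingredient underpinning the paper's \Cref{lem:density-inequality-modularity} and \Cref{lem:maximizer-union}), and derive everything from lattice structure: uniqueness in part~(a) because supermodular maximizers on $\{J:J\supseteq J^{(t)}\}$ are closed under union, and part~(b) from an inductively-proved ``principal-partition'' inequality $\phi_{\rho_t}(U)\le\phi_{\rho_t}(J^{(t)})$ for \emph{all} $U\subseteq J^{(t)}$ --- a statement that is strictly stronger than the paper's \Cref{lem:balanced-density-inequality}, which only handles $U$ sandwiched between $J^{(t-1)}$ and $J^{(t)}$. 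Your lower-bound argument in part~(b) is also more direct: rather than reason about an unknown optimizer, you show $\rho(S\cup J^{(t+1)}\,|\,S)\ge\rho_t$ for every feasible $S$, which gives $\rho_q\ge\rho_t$ with the matching $S=J^{(t)}$ upper bound. What each approach buys: the paper's lemmas are self-contained and mirror the definition closely; your reformulation makes the submodular/principal-partition structure explicit, which is conceptually cleaner, collapses the four auxiliary lemmas into one supermodularity check, and makes part~(c) nearly automatic (supermodular maximization via parametric min-cut and recovery of the unique inclusion-maximum solution from the minimal min-cut), whereas the paper's LP-rounding route requires an extra greedy argument to extract the maximal optimizer. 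One small caveat: in part~(c) you gesture at parametric max-flow rather than spell out the auxiliary network, but the approach is standard and sound given the supermodular structure you have already exhibited.
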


The proof of these parts are based on a novel minimax duality principle that could be of independent interest. We defer the proof to \Cref{sec:duality}.

\begin{remark}(On the computability of $\varphi_q$.) Recall that $\varphi_q$ is defined using the minimax optimization problem where both the minimum and the maximum operation is defined over exponentially many subsets of $H$. Fortunately, the simple structural characterization of \Cref{lem:onion-universality} (a) and (b) allows us to reduce the problem to a modification of the densest subgraph problem, for which a polynomial time algorithm is known due to \cite{Cha00}. The fact that we are working with graph-cuts introduces some complications, see \Cref{sec:charikar} for details.

\end{remark}

Now notice that \Cref{rem:MMSE} characterizes the MMSE using $\varphi_q,$ and \Cref{lem:onion-universality} characterizes $\varphi_q$ using the onion decomposition of $H$. Hence, their combination allows us to obtain the following corollary directly, which characterizes the MMSE at a much finer detail than before.
\begin{corollary}\label{cor:wek-dense-improved}
    For any weakly dense graph $H=H_n$ let $J^{(t)},t=0,1,2,\ldots,M$ be its onion decomposition from \Cref{dfn:onion}. If for some $t=0,1,2,\ldots,M-1$ it holds that 
    \begin{align}\label{eq:jump}
    \frac{|V(J^{(t+1)}) \setminus V(J^{(t)})|}{|J^{(t+1)} \setminus J^{(t)}|}-\frac{|V(J^{(t)}) \setminus V(J^{(t-1)})|}{|J^{(t)} \setminus J^{(t-1)}|}=\Omega\left(\frac{1}{\log n}\right),
    \end{align} 
    then for any $\delta>0$ if 
    \begin{align}\label{eq:interval}
    (1+\delta)n^{-\frac{|V(J^{(t+1)}) \setminus V(J^{(t)})|}{|J^{(t+1)} \setminus J^{(t)}|}} \leq p \leq (1-\delta)n^{-\frac{|V(J^{(t)} \setminus V(J^{(t-1)})|}{|J^{(t)} \setminus J^{(t-1)}|}},
    \end{align}
    where in \eqref{eq:interval} we treat for convenience $n^{-0/0}\defeq1$, it holds that
    \[ 
    \MMSE_n(p)=1-|J^{(t)}|/|H|+o(1).
    \]
    Moreover, for any $\delta>0$ if $p \leq (1-\delta)n^{-\frac{|V(J^{(M)} \setminus V(J^{(M-1)})|}{|J^{(M)} \setminus J^{(M-1)}|}}$ then \[ \MMSE_n(p)=1-|J^{(M)}|/|H|+o(1)=o(1).\]
\end{corollary}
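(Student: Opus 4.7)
The plan is to directly combine \Cref{thm:wek_dense}, which expresses $\MMSE_n(p)$ in terms of the thresholds $\varphi_q$, with \Cref{lem:onion-universality}(b), which evaluates $\varphi_q$ along the onion decomposition. Set $q_s := |J^{(s)}|/|H|$ and $\alpha_s := |V(J^{(s)}) \setminus V(J^{(s-1)})|/|J^{(s)} \setminus J^{(s-1)}|$ for $s = 1,\ldots,M$. Then \Cref{lem:onion-universality}(b) gives $\varphi_q = n^{-\alpha_{s+1}}$ on each interval $q \in [q_s, q_{s+1})$, so $\varphi_q$ is piecewise constant in $q$ with jumps at the $q_s$. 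Consequently, the window \eqref{eq:interval} can be rewritten as $[(1+\delta)\varphi_{q_t},\, (1-\delta)\varphi_{q_t-1/|H|}]$, and for any sufficiently small $\delta > 0$ the jump condition \eqref{eq:jump} ensures that this window is nonempty, since $n^{\alpha_{t+1}-\alpha_t} \geq n^{\Omega(1/\log n)}$ is bounded away from $1$.

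For the lower bound on the MMSE, I would apply \Cref{thm:wek_dense}(a) at $q = q_t$: by \Cref{lem:onion-universality}(b), $\varphi_{q_t} = n^{-\alpha_{t+1}}$, so the hypothesis $p \geq (1+\delta)n^{-\alpha_{t+1}}$ immediately gives $\liminf_n p/\varphi_{q_t} \geq 1+\delta > 1$, hence $\liminf_n \MMSE_n(p) \geq 1-q_t$. For the upper bound (in the case $t \geq 1$), I would pick $q^- := q_t - 1/|H|$; since $J^{(t)} \supsetneq J^{(t-1)}$ implies $|J^{(t)} \setminus J^{(t-1)}| \geq 1$, we get $q^- \in [q_{t-1},q_t)$, and thus $\varphi_{q^-} = n^{-\alpha_t}$. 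The hypothesis $p \leq (1-\delta)n^{-\alpha_t}$ then gives $\limsup_n p/\varphi_{q^-} \leq 1-\delta < 1$, so \Cref{thm:wek_dense}(b) yields $\limsup_n \MMSE_n(p) \leq 1 - q^- = 1 - q_t + 1/|H|$. Weak denseness forces $|H| \to \infty$, hence $1/|H| = o(1)$, and combining both bounds produces $\MMSE_n(p) = 1 - q_t + o(1)$. The special case $t = 0$ proceeds identically, except that the upper inequality $p \leq 1-\delta$ is vacuous and the trivial bound $\MMSE_n(p) \leq 1$ plays the role of the upper bound.

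The trailing ``Moreover'' statement follows the same template with $q^- := 1 - 1/|H|$: since $J^{(M-1)} \subsetneq J^{(M)} = H$, we have $q^- \in [q_{M-1},q_M)$ and hence $\varphi_{q^-} = n^{-\alpha_M}$, so $p \leq (1-\delta)n^{-\alpha_M}$ together with \Cref{thm:wek_dense}(b) gives $\MMSE_n(p) \leq 1/|H| + o(1) = o(1)$. I do not expect a serious obstacle here: the corollary is essentially a one-line consequence of \Cref{thm:wek_dense} and \Cref{lem:onion-universality}(b), and the only care required is (i) choosing $q^-$ just below $q_t$ so that it sits in the correct piecewise-constant interval of $\varphi_q$, using only the trivial fact that consecutive onion layers differ by at least one edge, and (ii) invoking the weak denseness assumption to absorb the $1/|H|$ slack in the upper bound.
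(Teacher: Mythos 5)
Your proposal is correct and follows the same route as the paper's proof: both invoke \Cref{thm:wek_dense}(a) just at (or just above) $q_t = |J^{(t)}|/|H|$ for the lower bound, and \Cref{thm:wek_dense}(b) just below $q_t$ for the upper bound, reading off $\varphi_q$ via \Cref{lem:onion-universality}(b) and absorbing the residual slack by weak denseness. The only cosmetic difference is that the paper takes $q^\pm = q_t \pm \epsilon_n$ for an unspecified sequence $\epsilon_n \to 0$, whereas you instantiate the concrete choice $q^- = q_t - 1/|H|$, which is the smallest step that lands in the previous piecewise-constant interval of $\varphi_q$; your observation that $1/|H| = o(1)$ by weak denseness closes the same gap.
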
The proof is deferred to \Cref{sec:ommit}. Some remarks are in order.
\begin{remark}(On the ``jump" assumption \eqref{eq:jump}.)\label{rem:jump}
     Assumption \eqref{eq:jump} asks that at the $(t+1)$th step of the onion decomposition, the maximum density $|J^{(t)}|/|V(J^{(t)})|$ reached has a non-trivial ``jump'' from the $t$th step. The role of the assumption is to ensure that the interval for $p$ considered in \eqref{eq:interval} is non-vacuous. Besides being of a technical nature, we highlight that the assumption is in fact satisfied for many $H$ of interest: for example, it can be straightforwardly checked to be satisfied for all steps $t$ for any graph $H$ with size sub-logarithmic, i.e., satisfying $|H|=o(\log n)$ (also see \Cref{sec:examples_wek} for more examples of such $H$.)

    The reason we require the assumption is that, in our main result \Cref{thm:wek_dense}, a small $1+\Omega(1)$ room between $p$ and $\varphi_q$ is needed to derive the asymptotic bounds on $\MMSE_n(p)$. We believe studying the behavior of the MMSE for $p$ being $(1+o(1))$-close to the thresholds $\varphi_q$, i.e., understanding the ``sharpness" of these transitions \cite{friedgut1999sharp,gamarnik2023sharp}, to be an interesting direction for future work.
\end{remark}

\begin{remark}(Interpretation of \Cref{cor:wek-dense-improved} and the subgraph expectation threshold) For this remark, assume a weakly dense $H$ such that the assumption \eqref{eq:jump} holds for all steps $t$ of the onion decomposition. Then, we highlight two interesting implications of \Cref{cor:wek-dense-improved} in terms of the different values the MMSE can take and the thresholds that are the MMSE discontinuity points.

First, notice that \Cref{cor:wek-dense-improved} tells us that for any noise level $p$ satisfying \eqref{eq:interval}, the Bayes-optimal performance is achieved by an estimator that recovers exactly the edges of the $J^{(t)}$ subgraph in $H^*$, up to an additive $o(1)$ error. One can therefore interpret our result as saying that for all noise levels $p$, the maximal possible subgraphs of $H$ that can be recovered is always a prescribed element of the onion decomposition of $H$. As the noise level $p$ decreases from one to zero, the first subgraph that can be recovered should be the densest one, i.e., $J^{(1)},$ the second should be the densest subgraph containing $J^{(1)},$ i.e., $J^{(2)},$ and so on, until we can recover all of $J^{(M)}=H$. It is arguably surprising that these critical subgraphs for recovery are so well structured for any weakly dense $H$.

Moreover, \Cref{cor:wek-dense-improved} implies that the thresholds at which the MMSE ``jumps'' are exactly given as functions of the onion decomposition via the formula 
\[
p=n^{-\frac{|V(J^{(t+1)} \setminus V(J^{(t)})|}{|J^{(t+1)}\setminus J^{(t)}|}} = n^{-\frac{1}{\rho(J^{(t+1)}|J^{(t)}  )}}, t=1,\ldots,M.
\] 
In particular, assuming the mild condition $|J^{(1)}|/|H|=\Omega(1)$ on $H$---a small variant of this natural condition is referred to as begin ``delocalized" in \cite{MNSSZ23}---the above observation implies that for $t=1$ the MMSE jumps from being trivial, i.e. taking the value $1-o(1),$ to its first non-trivial value $1-|J^{(1)}|/|H|+o(1)=1-\Omega(1)$ exactly at the threshold \[p=n^{-\frac{|V(J^{(1)}|}{|J^{(1)}|}}=\max_{J \subseteq H} n^{-\frac{|V(J)|}{|J|}},\] which is the subgraph expectation threshold for $H$ defined by Kahn and Kalai in \cite{KK07}, thus proving \Cref{cor:inf_main}. In other words, as mentioned in the introduction, the weak recovery threshold equals to the subgraph expectation threshold of $H$ for any such weakly dense graph $H$. In particular, our work gives a statistical meaning to the \emph{exact} subgraph expectation threshold of $H$ from probabilistic combinatorics in Bayesian statistics, without the need to account for any additional $O(\log |H|)$-factor.
\end{remark}

\begin{remark}[A characterization of the AoN phenomenon]

Recall that AoN happens for the planted subgraph model for some $H=H_n$ if the $\MMSE_n(p)$ curve converges to a step function from $0$ to $1$ as $n$ grows to infinity. An immediate consequence of \Cref{thm:wek_dense} is that the AoN phenomenon happens for a weakly dense graph if and only if for every $q,q' \in (0,1)$,
\[
\lim_n \frac{\varphi_q}{\varphi_{q'}} = 1.
\]
Interestingly, this implication can be seen as a generalization of one of the main results in \cite{MNSSZ23}, which requires an assumption of $H$ being ``delocalized'' (see \cite[Definition 4.3]{MNSSZ23}) to obtain a similar such AoN characterization.

\end{remark}

\subsection{On the convergence of the MMSE curve}
Observe that all the above results discuss the behavior of the MMSE curve when $n$ is finite but large. This is slightly diverging compared to the study of the limiting MMSE of other high dimensional statistical models, like compressed sensing \cite{reeves2019replica} or the planted clique problem \cite{MNSSZ23}, where the MMSE curve is (pointwise) converging as $n$ grows to infinity to a specific curve and the curve is characterized, often via methods from statistical physics. This discrepancy is fundamental in our result as we consider a much larger family of priors compared to the above results. Furthermore, we do not assume any joint structure on $H_n$ across different values of $n$; in other words, the planted subgraph $H_n$, and hence our prior, may change arbitrarily with $n$. For example, $H$ could be a $10\log n$-clique for even $n$, and a $\log \log n$-clique for odd $n$, leading to two convergent MMSE subsequences with different limits. More specifically, in this case, if say $p=1/2$, it can be easily proven via the AoN theory from \cite{MNSSZ23} that the (normalized) MMSE equals $o(1)$ for even $n$ and equals to $1-o(1)$ for odd $n$, hence the MMSE as a curve is \emph{not converging} as $n$ grows to infinty. Yet, under the assumption that the sequence of graphs $H_n$ is such that the pointwise limit of the MMSE \emph{does} exist, we have the following asymptotic formula as a direct corollary of \Cref{thm:wek_dense}.

\begin{restatable}{corollary}{limitpoint}
   \label{cor:limit-point-inversion}
   Let $H=H_n$ be a weakly dense graph and $p=p_n \in (0,1).$ Let the function $G: (0,1] \rightarrow [0,+\infty)$ defined by $G(q)=\varphi_q.$ Consider the sequence $\alpha_n \in (0,1)$ defined by \[\alpha_n =\inf G^{-1}([0,p]), n \in \mathbb{N}. \]
   Then any limit point $x \in [0,1]$ of $\{\alpha_n : n\in \mathbb{N}\},$  corresponds in an one-to-one fashion to a limit point of the sequence $\MMSE_n(p).$ In particular, if the $\lim_n \MMSE_n(p)$ exists, then the $\lim_n \alpha_n$ exists and it holds,
   
   \begin{align}\label{eq:mmse_fin}
        \lim_n \MMSE_n(p)=1-\lim_n \alpha_n.
    \end{align} 
\end{restatable}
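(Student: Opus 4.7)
The plan is to derive \Cref{cor:limit-point-inversion} as a direct consequence of \Cref{thm:wek_dense}, via monotonicity and compactness. I would first record that $q \mapsto \varphi_q$ is non-increasing on $(0,1]$: enlarging the outer feasible set $\{S \subseteq H : |S| \le q|H|\}$ in \eqref{eq:phi-q} can only lower the minimum. Consequently, $G^{-1}([0,p])$ is an up-set in $(0,1]$, so $\alpha_n$ is well-defined and characterized by $\varphi_q \le p$ for all $q > \alpha_n$ and $\varphi_q > p$ for all $q < \alpha_n$.

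The heart of the proof is the following claim: along any subsequence $(n_k)$ with $\alpha_{n_k} \to x \in [0,1]$, one also has $\MMSE_{n_k}(p) \to 1-x$. Fix $\epsilon > 0$ and set $q^\pm = x \pm \epsilon$. Since $\alpha_{n_k} \to x$, eventually $\varphi_{q^-}(n_k) > p_{n_k}$ and $\varphi_{q^+}(n_k) \le p_{n_k}$. To apply \Cref{thm:wek_dense}(a)--(b) one needs the strict ratio hypotheses $\liminf p/\varphi_{q^+} > 1$ and $\limsup p/\varphi_{q^-} < 1$. I would obtain these by exploiting the piecewise-constant structure of $\varphi_q$ (\Cref{rem:MMSE}, \Cref{lem:onion-universality}(b)), which allows one to pass to a sub-subsequence along which $q^\pm$ lies strictly inside a plateau adjacent to $\alpha_{n_k}$ and where the corresponding $\varphi$-value is separated from $p_{n_k}$ by a factor bounded away from $1$. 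Applying \Cref{thm:wek_dense}(a) at $q^+$ then yields $\liminf \MMSE_{n_k}(p) \ge 1 - (x + \epsilon)$, and part (b) at $q^-$ yields $\limsup \MMSE_{n_k}(p) \le 1 - (x - \epsilon)$; letting $\epsilon \to 0$ completes the claim.

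Given the claim, the one-to-one correspondence between limit points follows by compactness. Any subsequence of $(\alpha_n, \MMSE_n(p))$ in $[0,1]^2$ admits a further sub-subsequence along which both coordinates converge to some $(x,y)$; the claim then forces $y = 1-x$. In particular, if $\lim_n \MMSE_n(p)$ exists and equals $y$, every convergent sub-subsequence of $\alpha_n$ has the common limit $1-y$, so the full sequence $\alpha_n$ converges to $1-y$ and \eqref{eq:mmse_fin} holds.

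The main obstacle will be the quantitative upgrade described above: converting the soft inequalities $\varphi_{q^-} > p$ and $\varphi_{q^+} \le p$ into the strict-ratio hypotheses demanded by \Cref{thm:wek_dense}. The difficulty is most acute in borderline cases where $p$ sits essentially on top of a plateau value of $\varphi_q$; these must be handled either by further sub-subsequence extractions or by slightly enlarging $\epsilon$ so that $q^\pm$ falls on an adjacent plateau of strictly different density, where the polynomial-in-$n$ gap between distinct plateau values $n^{-1/\rho}$ (with $\rho$ the onion-decomposition densities from \Cref{lem:onion-universality}(b)) yields the required strict separation.
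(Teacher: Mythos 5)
Your overall strategy — pass to a subsequence along which $\alpha_n$ converges, sandwich with $q^\pm = x \pm \eps$, and apply Theorem~\ref{thm:wek_dense}(a)--(b) at the two sides — mirrors the paper's proof. You also correctly flag the genuine subtlety: the definitions of $\alpha_n$ and the monotonicity of $G_n$ only give $\liminf_n p/\varphi_{q^+} \ge 1$ and $\limsup_n p/\varphi_{q^-} \le 1$, not the \emph{strict} ratio hypotheses Theorem~\ref{thm:wek_dense} requires. However, your proposed fix does not work. You claim that distinct plateau values $n^{-1/\rho}$ (with $\rho$ running over the onion-decomposition densities of \Cref{lem:onion-universality}(b)) are separated by a polynomial-in-$n$ factor. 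They need not be: $1/\rho$ is a ratio of integers each at most $\max\{|H|,v(H)\}$, so two distinct plateau exponents can differ by as little as $\Theta(1/|H|^2)$; since $|H|$ may itself grow polynomially in $n$ for a weakly dense graph, the corresponding ratio $n^{\Theta(1/|H|^2)}$ can tend to $1$. So in the ``borderline'' case where $p_n$ sits exactly on a plateau value, further sub-subsequence extraction combined with this plateau-gap bound still fails to produce $\liminf p/\varphi_{q^+} > 1$, i.e.\ a ratio bounded away from $1$.

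The repair is simpler and is what the paper is implicitly using: the monotonicity of $\MMSE_n(\cdot)$ in $p$ (\Cref{lem:MMSE_poly}). Given $\liminf_n p/\varphi_{q^+} \ge 1$, fix any $\eta>0$ and apply Theorem~\ref{thm:wek_dense}(a) to the shifted sequence $p_n' \defeq (1+\eta)p_n$, for which $\liminf_n p_n'/\varphi_{q^+} \ge 1+\eta > 1$ holds trivially; this gives $\liminf_n \MMSE_n(p_n') \ge 1-q^+$, and since $p_n' > p_n$, monotonicity yields $\MMSE_n(p_n) \ge \MMSE_n(p_n')$, hence $\liminf_n \MMSE_n(p_n) \ge 1-q^+$. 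The symmetric argument with $(1-\eta)p_n$ and Theorem~\ref{thm:wek_dense}(b) handles the upper bound. This requires no structural information about the plateaus at all and is the mechanism the paper relies on (see also how the same slack is created in \Cref{rem:MMSE}). You should replace the plateau-separation lemma in your sketch with this monotonicity step; with that, your compactness argument for the one-to-one correspondence and the ``in particular'' conclusion goes through exactly as you describe.
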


The proof is based on simple calculus and is deferred to \Cref{sec:ommit}.

\subsection{A simple example}\label{sec:examples_wek}
We now work through a simple example to apply our established MMSE theory on weakly dense $H=H_n$ that illustrates the diversity of possible MMSE curves that one can obtain as the planted graph $H=H_n$ is varied. We emphasize at the outset that, while the simple example below can be solved directly by other methods, the calculation illustrates how our main theorem gives a general recipe that can be applied to compute the MMSE curve of any planted graph which is weakly dense.

\parhead{A union of disjoint cliques.} 
Let $m=m_n$ and $r_1> \cdots> r_m \geq 1,$ be an arbitrary sequence of integers with \[1/r_{i+1}-1/r_{i} \geq \delta /\log n\] for some constant $\delta>0.$ Such sequences include the cases $r_i=(C_i+o(1)) \log n, i=1,2,\ldots,m$ for some constants $C_1>C_2>\ldots>C_m>0$, or $r_i, i=1,\ldots,n$ is an arbitrary decreasing sequence of positive integers with $r_1=o(\sqrt{\log n}).$

We then consider $H$ to be the \emph{disjoint} union of $m$ cliques of sizes $r_1> \cdots> r_m \geq 1,$ so that \[H = \bigcup_{s\leq m}K_{r_s}\] and $k\defeq |H| = \sum_{s\leq m}\binom{r_s}{2}$.

In terms of the onion decomposition of $H$ (\Cref{dfn:onion}), we have $J^{(0)}=\emptyset$ and the densest subgraph of $H$ can be straightforwardly checked to be $J^{(1)}=K_{r_1}.$ Similarly, a simple inductive argument implies that $M=M(H)=m$ and \[J^{(t)} = \bigcup_{s\leq t}K_{r_s}, t=1,2,\ldots,m.\] Moreover, from \Cref{lem:onion-universality}, we know that for any $1>q \geq 0$, if $m-1 \geq t \geq 0$ is maximal such that $|J^{(t)}|/|H| \leq q$, it holds 
\[
\varphi_q=p_t \defeq n^{-\frac{|V(J^{(t+1)})\setminus V(J^{(t)})|}{|E(J^{(t+1)})\setminus E(J^{(t)})|}} = n^{-\frac{2}{r_{t+1} - 1}}.
\]We also set $p_m\defeq\varphi_1=0$ and $p_{-1}\defeq1.$
Then using \Cref{cor:wek-dense-improved} we have that for any $\delta>0$ and for each $-1\leq t\leq m-1,$ if $(1+\delta)p_{t+1} \leq p \leq (1-\delta)p_t,$ then

\[ \MMSE_n(p)=1-\frac{\sum_{s\leq t+1}\binom{r_s}{2}}{k}+o(1).\]

For example, if $r_i=(C_i+o(1)) \log n, i=1,2,\ldots,m$ for some constants $C_1>C_2>\ldots>C_m>0$ we conclude that $p_t=e^{-2/C_{t+1}}+o(1)$ for all $0 \leq t \leq m-1$ and $p_m=0, p_{-1}=0.$ Hence, we have that for any $\delta>0$ and for each $-1\leq t\leq m-1,$ if \[(1+\delta)e^{-2/C_{t+2}} \leq p \leq (1-\delta)e^{-2/C_{t+1}},\] then it holds
\[ \MMSE_n(p)=1-\frac{\sum_{s\leq t+1}C^2_s}{\sum_{s\leq m}C^2_s}+o(1),\]where set $e^{-2/C_{m+1}}\defeq0,e^{-2/C_0}\defeq1$. 

If $m=1$ and $C_1=2/\log 2$, we recover the AoN phenomenon for the planted clique model  from  \cite{MNSSZ23} where the  the clique is of size $k=(2+o(1))\log_2 n$ and the critical AoN noise level is $p=e^{-2/C_1}=1/2$.

\section{Main Results II: Planting general monotone properties}

\label{subsec:general}

We now turn to a more general model, referred to as the planted subset model in \cite{MNSSZ23}. In this general setting, we provide a weaker characterization of the MMSE curve as $n$ grows, but under \emph{no} assumptions on the planted structure. In particular, for the planted subgraph model discussed previously, this more general result applies to the MMSE curve for any $H=H_n$, even if not weakly dense.

To define this, let $N \in \mathbb{N}$ and fix some arbitrary family of $k=k_N$-sets $\calA=\{A_i\}_{i=1}^{M}$ in a finite universe $\mathcal{X}$ of $N$ elements, and some noise level $p=p_N$. Now,  the ``signal'' is an element $A$ from $\calA$ which is sampled from an (arbitrary but known) prior $\pi$ on $\calA$. Then, the statistician observes \[Y=A \cup X_p\] where the ``noise" $X_p$ is a sample from the $p$-biased product measure on $\mathcal{X}$, i.e., $X_p \sim \Ber(p)^{\otimes \mathcal{X}},$ and aims to recover $A$ from $Y$.

\begin{remark}One may want to imagine $\calA$ as the set of minimal elements of some monotone property of interest (see \Cref{sec:exp_prelims} for the standard definitions on monotone properties). Then the statistician aims to recover the planted ``certificate'' of this property in the presence of Bernoulli noise. The question of interest is: for what noise levels $p$ is it possible to infer the certificate of the monotone property?

For example, the planted subgraph model is a special case of the planted subset model. Consider the universe $\mathcal{X}$ to correspond to the $N=\binom{n}{2}$ edges of $K_n,$ the complete graph on $n$ vertices. In that case, the Bernoulli noise $X_p$ equals in distribution to $G(n,p)$. Now, for any subgraph $H=H_n$ of $K_n$, let $\calA$ to be the collection of all subsets of $\mathcal{X}$ (i.e., all subgraphs of $K_n$) which are copies of $H$ in $K_n.$ Under this choice of $\mathcal{X}$ and $\mathcal{A}$, the planted subset model becomes the planted subgraph model (it is easy to prove that the ``worst'' prior for planted subgraph models is the uniform one, see \Cref{lem:unif-is-worst-case} for the details).

Of course, the planted subset model generalizes well beyond the planted subgraph model to other interesting models, such as planting a connectivity certificate (i.e., a spanning tree) in $G(n,p)$ or planting arbitrary monotone properties on random hypergraphs.
\end{remark}

As in the previous section, we focus on the MMSE of the model as a function of $p$, defined as
\[ \MMSE_N(p,\pi)=\frac{1}{k}\min_{\hat{A}} \E_{A \sim \pi,Y=A\cup X_p,\hat{A}} \left[ \|\mathbf{1}_A-\hat{A}(Y)\|^2_2 \right]=\frac{1}{k}\E_{A \sim \pi,Y=A\cup X_p} \left[ \|\mathbf{1}_A-\E[\mathbf{1}_A|Y]\|^2_2 \right], \]
where the minimum is over all randomized estimators $\hat{A}: 2^{\mathcal{X}} \rightarrow \mathbb{R}^N$. As the prior is arbitrary in this model, we focus on the MMSE of estimating $A$ under the ``worst'' possible prior $\pi$, which using the strong duality of linear programs can be written as

\begin{align}\label{eq:minimax}
\mathrm{MinMax}(\calA,p)\defeq\max_{\pi} \MMSE_N(p,\pi)
&=\frac{1}{k}\min_{\hat{A}}\max_{\pi}\E_{A,Y,\hat{A}} \left[ \|\hat{A}(Y)-\mathbf{1}_A\|^2_2 \right].
\end{align}

\begin{remark}
    We note that the study of minimax rates of estimation, like $\mathrm{MinMax}(\calA,p)$, where one accounts for the worst-case prior is a topic of intense study for many decades in the literature of theoretical statistics. Moreover, the optimizers $\pi, \hat{A}$ in the definition of $\eqref{eq:minimax}$ are called the ``least favorable prior" and the ``minimax estimator", respectively.
\end{remark}
Our main result is to identify the order of the noise level $p$ for which $\mathrm{MinMax}(\calA,p)$ achieves a given fixed value of interest in $(0,1]$. Recall that in the planted subgraph model for a weakly dense $H$ and under a uniform prior, \Cref{thm:wek_dense} implies that $\MMSE_n(p)=1-q+o(1)$ for some $p=(1+o(1))\varphi_q$ where the threshold $\varphi_q$ is a modified subgraph expectation threshold. In the planted subset model, for any $q \in [0,1),$ the desired threshold is given by a modification of the so-called \emph{fractional expectation} thresholds of the monotone property generated by $\mathcal{A}$.

\begin{definition}
Fix any family of $k$-sets $\mathcal{A}$ in a finite universe $\mathcal{X}$ of $N$ elements. For any $q \in [0,1),$ we define
\[
\psi_{q}\defeq(\psi_q)_N=\min_{S\subseteq \mathcal{X}, |S|\leq qk} \left\lbrace \max_{w \in W_{S}} \left\lbrace \calE_w^{-1}(1/2) \right\rbrace \right\rbrace,
\]
where given $S \subset \mathcal{X}$, $\calE_w(p)\defeq\sum_{T \subseteq \mathcal{X} } w(T)p^{|T \setminus S|},$
and $W_S$ is the set of all ``fractional covers'' $w$ which are functions $w: 2^{\mathcal{X}} \rightarrow [0,1]$  satisfying the following two properties.
\begin{enumerate}[label=(\alph*)]
   \item (``Support condition") Unless $|T| \geq qk$ and $S \subseteq T$, it holds $w(T)=0$.

    \item (``Fractional cover condition") For all $A_i, i=1,2,\ldots,M$ where $S \subseteq A_i$ we have \[\sum_{T \subseteq A_i} w(T) \geq 1.\]
\end{enumerate}
We finally set $\psi_1\defeq0$.
\end{definition}

\begin{remark}
We remark that the case $q=0$ and $S=\emptyset$ corresponds exactly to the fractional expectation threshold \cite{Tal10} of the monotone property with minimal elements $\mathcal{A}$, see \Cref{dfn:frac_exp}. In particular, the value of the fractional expectation threshold has been calculated up to constants for a number of monotone properties, including the $H$-inclusion property for a subgraph $H=H_n$ of the complete graph $K_n$ \cite{FKP21,mossel2022second}. 
\end{remark}

\subsection{Characterizing the Minimax curve via the $\psi_q$ thresholds}
We now present our general result in this setting.

\begin{theorem}\label{thm:gen}
    Let $q \in [0,1)$, and $\calA$ be an arbitrary $k$-uniform family of subsets of $[N]$. Then, as $N \to \infty$,
    \begin{itemize}
        \item[(a)] If $\lim_N p/\psi_q =+\infty,$ then 
        \[
        \liminf_N \mathrm{MinMax}(\mathcal{A},p) \geq 1-q.
        \]
        \item[(b)] If $\lim_N p/\psi_q =0,$ then
        \[
        \limsup_N \mathrm{MinMax}(\mathcal{A},p) \leq 1-q.
        \]
    \end{itemize}
    
\end{theorem}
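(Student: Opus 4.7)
The plan is to prove both directions via Talagrand's strong LP duality \cite{Tal10} applied to the modified fractional expectation threshold $\psi_q$, combined with a Bayesian spread-lemma style argument in the spirit of \cite{MNSSZ23,mossel2022second}. The key observation is that, for each fixed $S\subseteq\mathcal{X}$ with $|S|\leq qk$, strong LP duality equates the inner maximum $\max_{w\in W_S}\mathcal{E}_w^{-1}(1/2)$ with the infimal scale $p$ at which there exists a probability measure $\mu_S$ supported on $\mathcal{A}_S:=\{A\in\mathcal{A}:S\subseteq A\}$ satisfying a $p$-spread bound of roughly the form $\mu_S(\{A:T\subseteq A\})\lesssim p^{|T\setminus S|}$ for every $T\supseteq S$ with $|T|\geq qk$. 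Taking the minimum over $S$ with $|S|\leq qk$ then gives the two-sided dual characterization of $\psi_q$ on which both halves of the proof rest.

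For part~(a), when $p/\psi_q\to\infty$, I would use this duality to extract an $S^\ast$ with $|S^\ast|\leq qk$ together with a probability measure $\mu^\ast$ on $\mathcal{A}_{S^\ast}$ that is $p'$-spread for some $p'=o(p)$, and declare $\pi=\mu^\ast$ to be the planted prior. Since both the planted $A$ and any posterior sample $A'$ then automatically contain $S^\ast$, the Bayesian overlap identity reduces $\MMSE_N(p,\mu^\ast)=1-k^{-1}\mathbb{E}[|A\cap A'|]$ to showing the excess overlap satisfies $\mathbb{E}[|(A\cap A')\setminus S^\ast|]=o(k)$. This is precisely a conditional analogue of the Bayesian spread lemma of \cite[Lemma 5.9]{MNSSZ23}: the spread bound on $\mu^\ast$ combined with the gap $p'/p=o(1)$ controls, via a planting-trick second moment computation, the expected number of posterior samples $A'$ with $|(A\cap A')\setminus S^\ast|\geq \ell$ as a geometrically decaying series in $\ell$.

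For part~(b), when $p/\psi_q\to 0$, I would fix via the max-min duality \eqref{eq:minimax} a worst-case prior $\pi^\ast$ realizing $\mathrm{MinMax}(\mathcal{A},p)$, so that it suffices to bound $\MMSE_N(p,\pi^\ast)\leq 1-q+o(1)$, equivalently $\mathbb{E}[|A\cap A'|]\geq qk-o(k)$ for $A,A'$ iid from the posterior given $Y=A\cup X_p$. Stratifying by the overlap pattern $S_0:=A\cap A'$ with $|S_0|<qk$, the hypothesis $p\ll\psi_q$ supplies for each such $S_0\subseteq A$ a fractional cover $w^{S_0}\in W_{S_0}$ with $\mathcal{E}_{w^{S_0}}(p)\to 0$, and a first-moment argument then shows that with high probability no ``bad'' $A'\in\mathcal{A}_{S_0}$ with $A\cap A'=S_0$ lies inside $Y$. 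A union bound over admissible $S_0$ yields the desired posterior concentration on $A'$ with $|A\cap A'|\geq qk-o(k)$.

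The main obstacle on both sides is converting the duality-supplied spread measure (respectively, good fractional cover) into sharp quantitative control of the Bayes posterior overlap distribution: in part~(a) this requires adapting the second-moment computation of the Bayesian spread lemma in \cite{MNSSZ23} to the restricted family $\mathcal{A}_{S^\ast}$ with shifted spread scale, while in part~(b) the delicate point is that the cover $w^{S_0}$ depends on $S_0\subseteq A$, so the union bound over all admissible $S_0$ must be executed without losing the $\mathcal{E}_{w^{S_0}}(p)\to 0$ gain---this is ultimately what accounts for the weaker multiplicative-constant-scale version of the theorem in the monotone setting as compared to the sharp $(1+o(1))$-factor version in Theorem~\ref{thm:wek_dense}.
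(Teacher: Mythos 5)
Your proposal is correct and follows essentially the same two-sided structure as the paper: part~(b) via a fractional first-moment/union-bound argument stratified by the overlap $S$ and using a fractional cover $w_S\in W_S$ for each $S$, and part~(a) via Talagrand's LP duality to extract a (restricted) spread measure on $\mathcal{A}_{S^*}$, then using it as the least-favorable prior and controlling the posterior overlap with the planting trick. The only cosmetic difference is that for part~(b) you frame the argument around a fixed worst-case prior whereas the paper bounds the posterior overlap uniformly over all priors; the underlying computation is the same, including the resolution of the union-bound concern you flag, which is absorbed by the geometric gain $(p/\psi_q)^{\delta k}$ coming from the support condition $|T|\geq qk$.
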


The proof is deferred to \Cref{sec:general}.

\begin{remark}
    We remark that while we are able to efficiently compute $\varphi_q$ in the setting of weakly dense graphs using the onion decomposition of $H$, we do not yet have a way to efficiently compute an optimizer for $\psi_q$ for arbitrary monotone properties. It is an interesting question for future work if a similar onion-peeling process exists for any monotone property.

    Despite this, given the success of probabilistic combinatorics in computing the fractional expectation thresholds for various different properties \cite{FKP21}, we find the calculation of $\psi_q$ for specific monotone properties an interesting direction---by \Cref{thm:gen}, this would have implications for the minimax rates of such problems. We next go over such a calculation for the well-studied case where $H$ is a perfect matching.
\end{remark}

\subsection{ A simple example}\label{sec:gen_example}

We now work through a simple example in this more general case.

\parhead{Perfect matching.}
Let us consider the case that $H$ is the perfect matching on $n$ vertices ($n$ even). This is a model that has received significant interest in the community (see e.g., \cite{ding2023planted,gaudio2025allsomethingnothingphasetransitionsplanted} and references therein). In this setting, the minimax rate is equal to the MMSE for the uniform distribution over planted matchings (see \Cref{lem:unif-is-worst-case}). 

We will show via direct computation that for any $q\in(0,1)$, \[\psi_q=(1+o(1))\frac{e}{n}.\] Applying \Cref{thm:gen}, we can conclude the following. \begin{itemize}
    \item If $p=o\lprp{\frac{1}{n}}$, then the MMSE will be $o(1)$ for any prior. We note that this is a result already established in \cite{ding2023planted}, by a direct analysis in this setting. 
    \item If $p=\omega\lprp{\frac{1}{n}}$, then there exists a prior (the uniform distribution) for which the MMSE will be $1-o(1)$. We note that this was very recently independently proved by \cite{gaudio2025allsomethingnothingphasetransitionsplanted}, using a direct analysis.
\end{itemize}
Recall that
\[
\psi_{q} = \min_{S\subseteq \mathcal{X}, |S|\leq qk} \left\lbrace \max_{w \in W_{S}} \left\lbrace \calE_w^{-1}(1/2) \right\rbrace \right\rbrace.
\]

First, we observe that for any choice of S, the max is the fractional Kahn--Kalai threshold $\tau_k$ for a perfect matching on $k=n-v(S)$ vertices. This is because any $S$ will be disconnected from the rest of $H$, so the resulting graph-cut is essentially just a normal graph.
Therefore, we have that
\begin{align*}
    \psi_q 
    &= \min_{S\subseteq \mathcal{X}, |S|\leq qk} \left\lbrace \tau_{n-v(S)} \right\rbrace \\
    &= \min_{k\geq (1-q)n} \tau_{k}
\end{align*}

It remains to compute $\tau_k$, the fractional Kahn Kalai threshold for a perfect matching in a size $k$ graph.

From \cite{mossel2022second}, for subgraph inclusion properties, the fractional Kahn--Kalai threshold is equal to the following:
$$\tau_n = \max_{J\subseteq H} \lprp{\frac{H[J]}{K_n[J]}}^{1/|J|}$$
where $G[J]$ for a graph $G$ refers to the number of unlabeled copies of $J$ in $G$.

In the case of perfect matchings, $\tau_k$ can be expressed as
\[ \tau_k = \max_{0 \le r \le k/2} \left( \frac{ M_{k/2}[M_r] }{ K_k[M_r] } \right)^{2/r} = \max_{1 \le r \le k/2} \left( \frac{ \binom{k/2}{r} }{ \binom{k}{2r} \cdot (2r-1)!! } \right)^{2/r}, \]
with $M_t$ denoting the matching with $t$ edges. We shall show that the above is maximized for $r = k/2$. Letting $a_r$ be the expression inside the parentheses, we have for $r \ge 1$,
\[ a_r = \frac{ \binom{k/2}{r} }{ \binom{k}{2r} \cdot (2r-1)!! } = \frac{ \binom{k/2}{r-1}  }{ \binom{k}{2r-2} \cdot (2r-3)!! } \cdot \frac{ \frac{k/2 - r}{r} }{ \frac{(k-2r)(k-2r+1)}{2r(2r-1)} \cdot (2r-1) } = a_{r-1} \cdot \frac{1}{k-2r+1}. \]
A simple inductive argument then implies that $a_r \le \left(\frac{1}{k-2r-1}\right)^r$. Indeed, this is trivially true for $r=1$. For larger $r$, we have by induction that $a_r = \frac{1}{k-2r+1} \cdot a_{r-1} \le \frac{1}{k-2r+1} \cdot \left( \frac{1}{k-2r+1} \right)^{r-1} \le \left(\frac{1}{k-2r-1}\right)^r$ as desired. Given this,
\[ a_r^{1/r} = \left( \frac{1}{k-2r+1} \cdot a_{r-1} \right)^{1/r} \ge \left( f(r-1)^{1/r-1} \cdot a_{r-1} \right)^{1/r} = a_{r-1}^{1/(r-1)}. \]
It follows that the maximum is attained for $r = k/2$. Thus,
\begin{align*}
    \tau_k
    &=  \lprp{\frac{1}{(k-1)!!}}^{2/k}
\end{align*}
Observe that this is minimized at $k=n$. Approximating $(n-1)!! = \lprp{\frac{n}{e}}^{n/2+o(1)}$, we have that 

\begin{align*}
    \tau_n
    &=\lprp{(e/n)^{n/2+o(1)}}^{2/n}\\
    &= (1+o(1))\frac{e}{n}.
\end{align*}
Hence we conclude that 
\[
\psi_q = (1+o(1))\frac{e}{n}
\]
for all $q\in [0,1]$, as desired.

\section{Onion Decomposition: Proof of \Cref{lem:onion-universality}}

\label{sec:duality}

In this section, we establish some key results on the onion decomposition that will be useful in multiple proofs throughout the paper and also prove \Cref{lem:onion-universality}.

We start with recalling the $\rho$ notation from \Cref{sec:notation}. Notice that our key quantity $(\varphi_q)$ can be defined via $\rho$; if we define for $q \in [0,1]$ the quantity
    \begin{align}\label{eq:rho_q}
 \rho_{q} \defeq \min\left\{ \max\left\{ \rho(J | S) : J \supseteq S \right\} : S \subseteq H, |S| \le q|H| \right\} 
\end{align}
    then it clearly holds
\[ \varphi_q=n^{-\frac{1}{\rho_q}}.\]

\subsection{Key lemmas} We now establish four useful combinatorial lemmas for the ``density" $\rho$ function, that follow from elementary graph theory arguments. We defer their proofs to \Cref{sec:ommit}.

\begin{restatable}{lemma}{densitymodularity}
    \label{lem:density-inequality-modularity}
    Let $A,B$ be subgraphs. Then,
    \[ 
    \rho(A\cup B|A) \ge \rho(B|A \cap B).
    \]
\end{restatable}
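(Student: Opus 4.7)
The plan is to unfold the definition of $\rho$ and reduce the inequality to an elementary set-theoretic comparison between vertex sets. First I would apply the defining formula $\rho(T_1|T_2) = (|T_1|-|T_2|)/|V(T_1) \setminus V(T_2)|$ to both sides. On both sides the numerator simplifies to $|B \setminus A|$, so the inequality reduces to showing that the left-hand denominator does not exceed the right-hand one, namely
\[ |V(A\cup B) \setminus V(A)| \le |V(B) \setminus V(A\cap B)|. \]

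Second, I would simplify each side separately. Since $V(A\cup B) = V(A) \cup V(B)$, the left-hand vertex set is just $V(B) \setminus V(A)$. For the right-hand side, I would use the fact that every edge of $A \cap B$ is, in particular, an edge of $A$, so $V(A \cap B) \subseteq V(A)$, and therefore $V(B) \setminus V(A) \subseteq V(B) \setminus V(A \cap B)$. Taking cardinalities gives the desired inequality between denominators, which yields the claim after re-inserting the common numerator $|B \setminus A|$.

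Finally, I would sanity-check the corner cases dictated by the convention $\rho(J) \defeq +\infty$ when $|V \setminus S| = 0$. If $V(A \cup B) \setminus V(A) = \emptyset$, then the left-hand side is $+\infty$ and the inequality is automatic; if instead $V(B) \setminus V(A \cap B) = \emptyset$, the vertex containment just derived forces $V(B) \setminus V(A) = \emptyset$ as well, so both sides are $+\infty$. I do not anticipate any real obstacle: the content is a direct unfolding of the graph-cut definitions combined with the trivial observation $V(A \cap B) \subseteq V(A)$.
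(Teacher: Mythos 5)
Your proof is correct and follows essentially the same route as the paper's: unfold the definition of $\rho$, note that both numerators equal $|B\setminus A|$, and reduce the claim to the containment $V(A\cap B)\subseteq V(A)\cap V(B)$ (you phrase it as $V(B)\setminus V(A)\subseteq V(B)\setminus V(A\cap B)$, which is equivalent). Your explicit check of the $\rho=+\infty$ corner cases is a small extra care the paper implicitly skips, but the core argument is identical.
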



\begin{restatable}{lemma}{lembalance}\label{lem:balance}
    \label{lem:balanced-density-inequality}
    Let $K|T$ be a graph-cut that is ``balanced'', in that for all $T'$ such that $T \subseteq T' \subseteq K$, $\rho(T'|T) \le \rho(K|T)$. Then, for all $T'$  such that $T \subseteq T' \subseteq K$ it also holds \[\rho(K|T') \ge \rho(K|T).\]
\end{restatable}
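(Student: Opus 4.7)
The proof will be a direct application of the \emph{mediant inequality}: if $a/b$ and $c/d$ are two positive fractions, then the mediant $(a+c)/(b+d)$ lies (weakly) between them.

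The plan is to decompose the numerator and denominator of $\rho(K|T)$ along $T'$. Since $T \subseteq T' \subseteq K$, we have
\[
|K| - |T| \;=\; (|T'| - |T|) + (|K| - |T'|),
\]
and, using that $V(T) \subseteq V(T') \subseteq V(K)$,
\[
|V(K) \setminus V(T)| \;=\; |V(T') \setminus V(T)| + |V(K) \setminus V(T')|.
\]
Thus $\rho(K|T)$ is exactly the mediant of $\rho(T'|T)$ and $\rho(K|T')$. Assuming both denominators above are strictly positive, the mediant inequality yields that $\rho(K|T)$ lies between these two values, so the hypothesis $\rho(T'|T) \le \rho(K|T)$ immediately forces $\rho(K|T') \ge \rho(K|T)$, as desired.

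The only obstacle is handling the degenerate situations where one of the denominators $|V(T') \setminus V(T)|$ or $|V(K) \setminus V(T')|$ vanishes, in which case the corresponding density is $+\infty$ by the convention in \Cref{dfn:graph_cut}. If $V(T') = V(T)$, then $\rho(T'|T) = +\infty$, and the hypothesis $\rho(T'|T) \le \rho(K|T)$ forces $\rho(K|T) = +\infty$, i.e.\ $V(K) = V(T)$; hence $V(K) = V(T')$, so $\rho(K|T') = +\infty \ge \rho(K|T)$. On the other hand, if $V(K) = V(T')$, then $\rho(K|T') = +\infty$ already dominates $\rho(K|T)$ trivially. In every other case both denominators are positive integers and the mediant argument above applies verbatim, completing the proof.
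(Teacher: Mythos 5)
Your proof is correct and is essentially the same argument as the paper's: the paper also writes $\rho(K|T') = \frac{(|K|-|T|) - (|T'|-|T|)}{(v(K)-v(T)) - (v(T')-v(T))}$ and observes that this is $\geq \rho(K|T)$ exactly when $\rho(T'|T) \le \rho(K|T)$, which is the mediant computation you invoke by name. The only difference is cosmetic; you are a bit more explicit about the degenerate cases where a denominator vanishes, which the paper leaves implicit.
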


The third lemma implies that modulo any subgraph $A,$ there exists a unique maximizer of the density $\rho(J|A), J \supseteq A.$

\begin{restatable}{lemma}{maximizerunion}\label{lem:maximizer-union}
    
    Let $A$ be a subgraph and consider the map $\varphi_A$ which maps any subgraph $J$ containing $A$ to $\varphi_A(J)=\rho(J|A).$ If $J'$ and $J''$ are two subgraphs containing $A$ and at least one of them is balanced, then $\varphi_A(J'\cup J'') \geq \min\{\varphi_A(J'), \varphi_A(J''')\}$. In particular, if $J', J''$ are two maximizers of $\varphi_A$, then $J'\cup J''$ is also a maximizer of $\varphi_A$, so there is a unique maximal maximizer of $\varphi_A.$
\end{restatable}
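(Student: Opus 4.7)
The plan is to prove the main inequality $\varphi_A(J' \cup J'') \ge \min\{\varphi_A(J'), \varphi_A(J'')\}$ by chaining the mediant (Farey) inequality with Lemma~\ref{lem:density-inequality-modularity} and Lemma~\ref{lem:balance}, and then to deduce the uniqueness of a maximal maximizer from the one-line observation that every maximizer of $\varphi_A$ is automatically balanced over $A$.

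Assume without loss of generality that $J'$ is balanced over $A$. The first step will be to realize $\rho(J' \cup J''|A)$ as a mediant \emph{through} $J''$: the identities
\[
|J' \cup J''| - |A| = (|J''| - |A|) + (|J' \cup J''| - |J''|), \qquad |V(J' \cup J'')| - |V(A)| = (|V(J'')| - |V(A)|) + (|V(J' \cup J'')| - |V(J'')|)
\]
exhibit $\rho(J' \cup J''|A)$ as a mediant of $\rho(J''|A) = \varphi_A(J'')$ and $\rho(J' \cup J''|J'')$, so by the standard mediant bound,
\[
\rho(J' \cup J''|A) \;\ge\; \min\bigl\{\varphi_A(J''),\; \rho(J' \cup J'' \mid J'')\bigr\}.
\]
Next, applying Lemma~\ref{lem:density-inequality-modularity} with the pair $(J'', J')$ yields $\rho(J' \cup J'' \mid J'') \ge \rho(J' \mid J' \cap J'')$. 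Finally, since $A \subseteq J' \cap J'' \subseteq J'$ and $J'$ is balanced over $A$, Lemma~\ref{lem:balance} gives $\rho(J' \mid J' \cap J'') \ge \rho(J'|A) = \varphi_A(J')$. Chaining these three inequalities delivers the main claim.

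For the ``in particular'' statement, I would observe that any maximizer $J^*$ of $\varphi_A$ is automatically balanced over $A$: any $A \subseteq T \subseteq J^*$ with $\rho(T|A) > \rho(J^*|A)$ would itself be a strictly better argument of $\varphi_A$, contradicting maximality of $J^*$. Hence the main inequality applies to any pair of maximizers $J', J''$, showing $J' \cup J''$ is again a maximizer; taking the union over all maximizers then produces the unique maximal maximizer. The one technical subtlety I anticipate is the boundary case where $V(J' \cup J'') = V(J'')$, in which a denominator of the mediant vanishes; this is handled transparently by the convention $\rho = +\infty$ on graph-cuts with empty vertex complement from Definition~\ref{dfn:graph_cut}, under which the mediant inequality degenerates to the trivial bound $\rho(J' \cup J''|A) \ge \rho(J''|A)$. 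What I expect to be the main conceptual (as opposed to technical) step is identifying the \emph{correct} decomposition---through $J''$ rather than $J'$---so that balancedness of $J'$ can be leveraged by Lemma~\ref{lem:balance} against the intermediate base $J' \cap J''$.
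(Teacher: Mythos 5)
Your proof is correct. Let me first confirm the chain: writing the numerator and denominator of $\rho(J'\cup J''|A)$ as the sums
\[
|J'\cup J''|-|A|=(|J''|-|A|)+(|J'\cup J''|-|J''|), \qquad
|V(J'\cup J'')\setminus V(A)|=|V(J'')\setminus V(A)|+|V(J'\cup J'')\setminus V(J'')|
\]
(valid since $V(A)\subseteq V(J'')\subseteq V(J'\cup J'')$), the mediant bound gives $\rho(J'\cup J''|A)\ge\min\{\rho(J''|A),\,\rho(J'\cup J''|J'')\}$; \Cref{lem:density-inequality-modularity} with $(A,B)=(J'',J')$ gives $\rho(J'\cup J''|J'')\ge\rho(J'|J'\cap J'')$; and \Cref{lem:balance} with $T=A$, $T'=J'\cap J''$, $K=J'$ (valid since $J'$ is assumed balanced over $A$ and $A\subseteq J'\cap J''\subseteq J'$) gives $\rho(J'|J'\cap J'')\ge\rho(J'|A)$. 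The degenerate case $V(J'\cup J'')=V(J'')$ is handled by the $\rho=+\infty$ convention exactly as you note, and the ``in particular'' argument (every maximizer is automatically balanced over $A$) is also right.

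Your route is genuinely different from the paper's, though the ingredients are cousins. The paper computes $\rho(J'\cup J''|A)$ in one shot via inclusion--exclusion on $Q'=J'\setminus A$ and $Q''=J''\setminus A$, bounds the denominator by the submodularity of vertex count ($|V(Q')\cap V(Q'')|\ge v(Q'\cap Q'')$), applies balancedness once in the form $\rho(J'\cap J''|A)\le\max\{\rho(J'|A),\rho(J''|A)\}$, and finishes with a (left implicit) algebraic step. Your proof instead telescopes through $J''$ and chains the two auxiliary lemmas the paper has already proved, so the submodularity inequality and the mediant observation each appear exactly once as named steps (\Cref{lem:density-inequality-modularity} and \Cref{lem:balance} respectively) rather than being woven into a single calculation. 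The tradeoff is that your argument is more modular and localizes precisely where balancedness is used, whereas the paper's version is a few lines shorter but asks the reader to reconstruct the final algebra. One small stylistic note: you only need the decomposition through the \emph{non}-balanced side because \Cref{lem:balance} has to be applied to the balanced $J'$; it is worth stating that asymmetry explicitly, since a reader trying the decomposition through $J'$ instead will find themselves unable to invoke \Cref{lem:balance} and may think the approach is stuck.
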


The fourth lemma asserts that the densities in the onion decomposition are non-increasing.

\begin{restatable}{lemma}{union}\label{lem:monoton_onion}
    Let $A$ be a subgraph and $B=\arg\max_{J \supseteq A} \rho (J|A).$ Then for all $C \supseteq B$, \[\rho(C|B) \leq \rho(B|A).\] In particular, for $J^{(t)}, t=0,1,\ldots,M$ the onion decomposition of any subgraph $H$ the sequence $\rho(J^{(t+1)}|J^{(t)}), t=0,1,\ldots,M-1$ is non-increasing.
\end{restatable}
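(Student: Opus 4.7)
The plan is to combine a weighted-average (mediant) identity for the density function $\rho(\cdot|\cdot)$ with the maximality of $B$. First, I would observe that since $A\subseteq B\subseteq C$, both the edge count $|C|-|A|$ and the vertex count $|V(C)\setminus V(A)|$ decompose additively along the chain $A\subseteq B\subseteq C$. Concretely, setting $\alpha\defeq|V(B)\setminus V(A)|$ and $\beta\defeq|V(C)\setminus V(B)|$, and assuming $\alpha+\beta>0$, one obtains the identity
\[
\rho(C|A) \;=\; \frac{\alpha\,\rho(B|A) + \beta\,\rho(C|B)}{\alpha+\beta},
\]
as an immediate consequence of expanding both sides via the definition of $\rho(\cdot|\cdot)$ in \Cref{dfn:graph_cut} and the partitions $|C|-|A|=(|C|-|B|)+(|B|-|A|)$ and $|V(C)\setminus V(A)|=\beta+\alpha$.

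Given this identity, the desired inequality follows almost immediately. Since $B$ is a maximizer of $\rho(\cdot|A)$ over supergraphs of $A$, one has $\rho(C|A)\leq\rho(B|A)$; substituting into the identity yields $\beta\,\rho(C|B)\leq\beta\,\rho(B|A)$, and hence $\rho(C|B)\leq\rho(B|A)$ whenever $\beta>0$. The remaining degenerate case $\beta=0$, i.e.\ $V(C)=V(B)$, is handled directly: if moreover $C\neq B$, then $|C|>|B|$ while the denominator of $\rho(C|A)$ equals that of $\rho(B|A)$, so $\rho(C|A)>\rho(B|A)$, contradicting the maximality of $B$. Thus $C=B$ in that case, and the inequality is interpreted using the conventions of \Cref{dfn:graph_cut}.

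The ``in particular'' part then follows by a one-line iteration of the first part. For each $t\geq 1$, \Cref{dfn:onion} ensures that $J^{(t)}$ is a maximal maximizer of $\rho(\cdot|J^{(t-1)})$ among subgraphs strictly containing $J^{(t-1)}$, and $J^{(t+1)}\supseteq J^{(t)}$ by construction; applying the first part with $A=J^{(t-1)}$, $B=J^{(t)}$, and $C=J^{(t+1)}$ immediately yields $\rho(J^{(t+1)}|J^{(t)})\leq\rho(J^{(t)}|J^{(t-1)})$. The main (and rather minor) obstacle I anticipate is carefully handling the boundary cases in which the vertex sets of $A$, $B$, or $C$ happen to coincide; these amount to bookkeeping with the $+\infty$ convention for $\rho$ from \Cref{dfn:graph_cut}, rather than any substantive difficulty.
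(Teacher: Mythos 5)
Your proof is correct and follows essentially the same route as the paper's: both hinge on the mediant inequality, using the additive decompositions of edge and vertex differences along $A\subseteq B\subseteq C$ together with $\rho(C|A)\leq\rho(B|A)$ from maximality of $B$; the paper performs the same algebra directly on the fractions whereas you package it as the weighted-average identity. Your handling of the $\beta=0$ edge case is a reasonable extra care step the paper leaves implicit (there $C\supsetneq B$ is what's actually used), though the final sentence appealing to ``the conventions'' is a little loose since $\rho(B|B)=+\infty$, so one should just note that for $C=B$ the claim is vacuous/unused.
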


\subsection{Simplifying $\varphi_q$: Proofs of \Cref{lem:onion-universality}(a) and (b)}
The above lemmas allow us to conclude parts (a) and (b) of \Cref{lem:onion-universality}, restated for convenience.

\onionuniversality*

\Cref{lem:maximizer-union} applied to $A=J^{(t)}$ immediately implies (a). 

\begin{proof}[Proof of (b)] Fix some $q \in [0,1)$ and let $(S^*,J^*)$ be a minimax pair in the definition of $\rho_q$, which in particular also satisfies
\[\varphi_q=n^{-1/\rho(J^*|S^*)}.\]
In particular, our goal is to show that for $t$ such that $|J^{(t)}| \leq q |H|<|J^{(t+1)}|$, it holds that $\rho(J^{(t+1)} | J^{(t)}) = \rho(J^* | S^*)$.

We start with choosing $t$ to be the step of the onion decomposition such that $J^{(t)}\subseteq S^*$ but $J^{(t+1)}\not\subseteq S^*$. We first prove that for this choice of $t$,
 \begin{align} \label{eq:exists_t}
\rho(J^{(t+1)} | J^{(t)}) = \rho(J^* | S^*)
\end{align}
By the definition of the onion decomposition, $J^{(t+1)} \in \argmax_{J \supseteq J^{(t)}} \rho(J | J^{(t)})$. Combining this with the optimality of $S^*$, that means we have
\[ \rho\left( J^{(t+1)} | J^{(t)} \right) = \max_{J \supseteq J^{(t)}} \rho(J|J^{(t)}) \ge \rho(J^* | S^*). \]
Next, defining $\wt{J} = J^{(t+1)} \cup S^*$ and $\wt{S} = J^{(t+1)} \cap S^*$, \Cref{lem:density-inequality-modularity} implies that
\[ \rho(\wt{J} | S^*) \ge \rho(J^{(t+1)} | \wt{S}). \]
Finally, by \Cref{lem:balanced-density-inequality},
\[ \rho(J^{(t+1)} | \wt{S}) \ge \rho(J^{(t+1)} | J^{(t)}). \]Hence, combining the last three displayed inequalities, we have \[\rho(\wt{J} | S^*) \geq \rho(J^* | S^*).\] But by the optimality of $J^*$ given $S^*$,  $\max_{J \supseteq S^*} \rho(J|S^*)= \rho(J^* | S^*)$, all the above four displayed inequalities should be equalities, proving \eqref{eq:exists_t}.

Thus, for all $q \in [0,1]$, \[\rho_q \in \{ \rho(J^{(t+1)} | J^{(t)}) : 0 \leq t \leq q |H|\}.\] But again as by definition for all $t$, $\rho\left( J^{(t+1)} | J^{(t)} \right) = \max_{J \supseteq J^{(t)}} \rho(J|J^{(t)}) $ it also holds by setting $S=J^{(t)}, t \leq q|H|$ and the definition of $\rho_q$ that \[ \rho_q \leq \min \{ \rho(J^{(t+1)} | J^{(t)}) : 0 \leq t \leq q |H|\}.\]Hence, \[\rho_q = \min \{ \rho(J^{(t+1)} | J^{(t)}) : 0 \leq t \leq q |H|\}.\] But by \Cref{lem:monoton_onion} we have that $\rho(J^{(t+1)} | J^{(t)})$ is a non-increasing function of $t$. Therefore the last displayed equation gives $\rho_q=\rho(J^{(t+1)} | J^{(t)})$ for the maximum possible $t \leq q |H|$, from which (b) follows.
\end{proof}

\subsection{Computability of $\varphi_q$: Proof of \Cref{lem:onion-universality}(c)}\label{sec:charikar}

We further have the following lemma, which shows that this procedure can be implemented in polynomial-time.

\begin{proof}
    We first show that there is an efficient algorithm to find the densest subgraph in a graph-cut. Specifically, given a graph-cut $G|S$, we show how to find a $J\subseteq G|S$ attaining the value $f(G|S) = \max_{J\subseteq G|S}\rho(J)$.
    
    The algorithm and its analysis are based on Charikar's Linear Programming (LP) approach to find a densest subgraph in a graph \cite{Cha00}. We consider the following LP:
    \begin{align*}
        \maxst{\sum_{ij \in G\setminus S} x_{ij}}
        {
        x_{ij} \leq y_i \qquad \qquad \qquad \forall i\in V(G)\setminus V(S) \\
        &x_{ij} \leq y_i \qquad \qquad \qquad\forall j\in V(G)\setminus V(S) \\
        &\sum_{i \in V(G)\setminus V(S)} y_i \leq 1 \\
        &x_{ij}, y_i \geq 0 \qquad \qquad \qquad \forall ij \in G|S.
        }
    \end{align*}
    Let $v^*$ be the optimal value of this LP. To see that $v^*\geq f(G|S),$ note that for every $J\subseteq G|S,$ letting $y_i = \frac{1}{v(J)}\bone\{i \in V(J)\}$ and $x_{ij} = \frac{1}{v(J)}\bone\{ij \in J\}$, we get $\sum_{ij \in G|S} x_{ij} =\rho(J)$. Moreover, $\sum_{i\in V(G)\setminus V(S)} y_i = 1$, and the other two constraints are trivially satisfied.
    
    Next, given a solution to the LP with value $v^*,$ we show how to extract a choice of $J$ such that $\rho(J)\geq v^*.$ Let $\{\overline{x}_{ij}\}_{ij}\cup \{\overline{y}_i\}_i$ be such a solution. Note that we can assume without loss of generality that $\overline{x}_{ij} = \min\{\overline{y}_i, \overline{y}_j\}$ when $i,j \in V(G)\setminus V(S)$, $\overline{x}_{ij} = \overline{y}_i$ when $j\in V(S)$, and $\overline{x}_{ij} = \overline{y}_j$ when $i\in V(S).$  For each $r \in \R_{\geq 0},$ consider the subgraph $E(r) = \{ij : \overline{x}_{ij}\geq r\} \subseteq G \setminus S$ and denote by $V(r) \defeq\{i : \overline{y}_i\geq r\}$. Note that for each $r$, by our assumption, for any edge of the subgraph $E(r)$, $ij\in E(r)$ it can be easily checked to hold $\{i,j\}\setminus V(S) \subseteq V(r)$, and therefore
    \begin{align}\label{eq:inclusion}
        V(E(r))\setminus V(S) \subseteq V(r).
    \end{align}Now, we claim that there exists $r$ such that $\rho(V(r), E(r)) \geq v^*.$ Indeed, since $\int_0^\infty |E(r)|dr = \sum_{ij \in G|S}\overline{x}_{ij}$ and $\int_0^\infty |V(r)|dr = \sum_{i \in V(G)\setminus V(S))}\overline{y}_{i}\leq 1$, we get
    \begin{align*}
        v^* &\leq \frac{\int_0^\infty |E(r)|dr}{\int_0^\infty |V(r)|dr} \\
        &\leq \sup_r \frac{|E(r)|}{|V(r)|}\\
        & \leq \sup_r \frac{|E(r) \setminus S|}{|V(E(r))\setminus V(S)|}.
    \end{align*}
    But then note that we can efficiently enumerate all combinatorially distinct $ E(r)$ to find a $J=E(r)$ such that
    \[\rho(J)=\rho(E(r))=\frac{|E(r) \setminus S|}{|V(E(r))\setminus V(S)|} \geq v^*,\]
    as desired.
    Given the above, we can efficiently construct sequences $\emptyset=\wt{J}^0,\wt{J}^1,\dots, \wt{J}^\ell$ and $\wt{J}^{(t)}=\cup_{s\leq t}\wt{J}^s$ so that, for each $t\leq \ell-1$, $\wt{J}^{t+1}$ is a maximizer of $J\mapsto \rho(J|\wt{J}^{(t-1)})$ among $J \supseteq \wt{J}^{(t-1)}.$
    
    To efficiently construct the onion decomposition, we need yet to show something slightly stronger: we need to efficiently construct the sequence $\emptyset = J^0, J^1,\dots, J^{m}$ such that, for $J^{(t)}=\cup_{s\leq t} J^s$, we have that $J^{t+1}$ is the \emph{maximal} $J$ maximizing the function $J\mapsto \rho(J| \wt{J}^{(t-1)})$ among $J \supseteq \wt{J}^{(t-1)}$ (the uniqueness of the maximal $J$ follows by \Cref{lem:onion-universality}).

    We do this in a greedy fashion as follows. Let $s_1\geq 1$ be maximal such that \[\rho(\wt{J}^1| \wt{J}^{(0)}) =\cdots = \rho(\wt{J}^{s_1}|\wt{J}^{(s_1-1)}),\] and let $J^1 = \cup_{s\leq s_1}\wt{J}^{s}.$ Then let $s_2$ be maximal such that \[\rho(\wt{J}^{s_1+1}|\wt{J}^{(s_1)}) =\cdots = \rho(\wt{J}^{s_2}| \wt{J}^{(s_2-1)}),\] and let $J^2 = \cup_{s_1<s\leq s_2}\wt{J}^s,$ and so on. Notice that using \Cref{lem:monoton_onion} we have for all $t,$
    \begin{align}\label{eq:step_down}
       \rho(\wt{J}^{s_t+1}|\wt{J}^{(s_t)})<\rho(\wt{J}^{s_t}|\wt{J}^{(s_t-1)}).
    \end{align}
    
    Now, it is straightforward to check that for all $t$ $J^t$ maximizes $J\mapsto \rho (J| J^{(t-1)}).$ Hence, it remains to show that $J^t$ is in fact a \emph{maximal} such maximize for all. Suppose that this is not the case for some $t$. Then by \cref{lem:maximizer-union}, there exists $J'\not\subseteq J^t$ so that $J'\cup J^t$ is the maximal maximizer of $J\mapsto \rho (J| J^{(t-1)}).$ In particular, it holds \[\rho(J'\cup J^t|J^{(t-1)})=\rho(J^t|J^{(t-1)})=\rho(\wt{J}^{s_t}|\wt{J}^{s_{t-1}})\] which implies for $ J''\defeq J'\setminus J^{t}$ that 
    \[\rho(J''|\wt{J}^{s_{t}})=\rho(\wt{J}^{s_t}|\wt{J}^{s_{t-1}}).\]We can therefore guarantee that
    \[\rho(\wt{J}^{s_t+1}|\wt{J}^{(s_t)})\geq \rho(J''|\wt{J}^{s_{t}})=\rho(\wt{J}^{s_t}|\wt{J}^{(s_t-1)}),\]a contradiction with \eqref{eq:step_down}.
\end{proof}


\section{The MMSE for weakly dense $H$: Proof of \Cref{thm:wek_dense}}
\label{sec:main_steps}

In this section, we give an overview of the main technical steps involved in the proof of our main result, \Cref{thm:wek_dense}. Our first step is a ``positive'' result, showing that if $p$ is smaller than a threshold $\wt{\varphi}_q,$ then the noise is small enough so that the limiting MMSE is at least $1-q$. The second step is a ``negative'' result, proving that if $p$ is larger than $\varphi_q$ then the noise is high enough so that the limiting MMSE is at most $1-q$. Our final step shows that these two thresholds, $\wt{\varphi}_q$ and $\varphi_q$, are in fact equal.

\subsection{Two lemmas: the importance of the order of the min and max operators}

Our first lemma, proved in \Cref{sec:ease}, shows that for $p$ below a modified version of the $\varphi_q$ threshold defined in \eqref{eq:phi-q} (where importantly the min and max operators have been exchanged) it is possible to achieve MMSE at most $1-q$.

\begin{restatable}[Possibility of Recovery]{lemma}{easyrecovery}\label{lem:ease}
  Suppose $H=H_n$ is a weakly dense $H$.  Given $q\in [0,1)$, define the threshold
    \[
         \wt{\varphi}_{q} = (\wt{\varphi}_{q})_n=\max\left\{ \min\left\{ n^{-\frac{|V(J)\setminus V(S)|}{|J\setminus S|}} :  S \subseteq J, |S| \le       q |H|\right\} : J \subseteq H \right\}. 
    \]
    Then for any $p=p_n$ with $ \limsup_n p/ \wt{\varphi}_{q} <1,$ it holds
    \begin{align*}
        \limsup_n \MMSE_n(p) \leq 1-q.
    \end{align*}
\end{restatable}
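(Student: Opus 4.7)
The plan is to bound $\MMSE_n(p)$ from above by reducing to a high-probability statement about the structure of copies of a suitably chosen subgraph $J \subseteq H$ in the observed graph $G = H^* \cup G_0$. First, I would invoke the standard Bayesian identity
\[
\MMSE_n(p) \;=\; 1 - \frac{1}{|H|}\,\E\!\left[\,|H^* \cap \tilde H|\,\right],
\]
where $\tilde H$ is an independent sample from the posterior of $H^*$ given $G$. Since the posterior is uniform over copies of $H$ contained in $G$ (by Bayes' rule in the planted subgraph model), it suffices to show that, with high probability over $(H^*,G)$, \emph{every} copy $\tilde H$ of $H$ in $G$ satisfies $|\tilde H \cap H^*| \ge q|H|$; this would give $\E[|H^* \cap \tilde H|\mid G] \ge q|H|$ almost surely, and hence $\limsup_n \MMSE_n(p) \le 1-q$.

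Let $J$ be an outer maximizer in the definition of $\wt{\varphi}_q$, so that by hypothesis there exists $\eps > 0$ with $n^{v(J|S)}\, p^{|J\setminus S|} \le (1-\eps)^{|J\setminus S|}$ for every $S \subseteq J$ with $|S| \le q|H|$. Fix any embedding $J \hookrightarrow H$, so that each copy $\tilde H$ of $H$ in $G$ distinguishes a subcopy $\tilde J \subseteq \tilde H$ of $J$. Since $|\tilde H \cap H^*| \ge |\tilde J \cap H^*|$, it is enough to prove the stronger claim that with high probability, every copy $\tilde J$ of $J$ in $G$ (in particular those arising as the distinguished subcopy of some $\tilde H$) satisfies $|\tilde J \cap H^*| \ge q|H|$.

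To establish this, I would perform a first-moment calculation. Conditionally on $H^*$ the noise $G_0 \sim G(n,p)$ is independent, so any ``bad'' $\tilde J \subseteq G$ with $T = \tilde J \cap H^*$ must satisfy $T \subseteq H^*$ and $\tilde J \setminus T \subseteq G_0$. Decomposing by the isomorphism type $S \subseteq J$ of $T$ (so that $T \subseteq H^*$ contributes a factor of $N_S(H)$, the number of copies of $S$ in $H$, and the remaining $|J \setminus S|$ edges of $\tilde J$ lie in $G_0$ with probability $p^{|J\setminus S|}$) gives
\[
\E\bigl[\,\#\{\tilde J \subseteq G : |\tilde J \cap H^*| < q|H|\}\,\bigr]
\;\le\; \sum_{S \subseteq J,\;|S| < q|H|}\; N_S(H)\,n^{v(J|S)}\,p^{|J\setminus S|}
\;\le\; \sum_{S \subseteq J,\;|S| < q|H|}\; N_S(H)\,(1-\eps)^{|J\setminus S|},
\]
where the last inequality uses the threshold hypothesis; Markov's inequality then converts this into the desired high-probability statement.

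The hardest part will be to show this final sum is $o(1)$. The crude bound $N_S(H) \le v(H)^{v(S)}$, combined with the weakly dense hypothesis $|H| \gg v(H)\log v(H)$, absorbs the entropic factor $\exp(v(S)\log v(H))$ against the decay $(1-\eps)^{|J\setminus S|}$ whenever $|J\setminus S|$ is comparatively large, so the difficulty concentrates on $S$ with $|S|$ close to $|J|$. For this regime, I expect the proof needs to exploit the ``balanced'' structural properties of the maximizer $J$ --- which by \Cref{lem:onion-universality}(b) can be identified with the piece $J^{(t+1)}$ of the onion decomposition of $H$, and hence is balanced in the sense of \Cref{lem:balance} --- to restrict the sum to a controllable set of contributing iso types $S$, possibly combined with restricting the first moment to $\tilde J$'s that actually extend to a copy of $H$ in $G$ to gain additional cancellation from the multiplicity of $H$-extensions.
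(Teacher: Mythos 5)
Your overall strategy---Nishimori, pass from copies of $H$ to copies of an outer-maximizing subgraph $J^*$, then union-bound over intersection patterns---is precisely the paper's, but the final step has a genuine gap, and it is exactly the step you flag as "the hardest part." You propose to show that with high probability \emph{every} copy $\tilde H$ of $H$ in $G$ has $|\tilde H\cap H^*|\ge q|H|$. After reducing to copies of $J^*$ and applying the threshold hypothesis, the contribution of intersection type $S$ decays like $(1-\eps)^{|J^*\setminus S|}$. But you only demand $|S|<q|H|$, while $|J^*|$ need only exceed $q|H|$ by a single edge, so the exponent $|J^*\setminus S|$ can be as small as $1$; the entropic factor $v(H)^{O(v(H))}$ then dominates and the sum does not vanish. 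Neither of your proposed fixes addresses this: the balancedness of $J^*$ does not thin the set of bad $S$, and restricting to $\tilde J$'s that extend to full $H$-copies is not used in the paper and offers no visible extra cancellation.

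The missing device is a $\delta$-slack: fix an arbitrary constant $\delta>0$, show instead that with high probability no copy of $H$ in $G$ has overlap $\le(q-\delta)|H|$ with $H^*$, and send $\delta\to0$ only at the very end (the conclusion is a $\limsup\le 1-q$, so this loses nothing). Combined with the structural fact $|J^*|>q|H|$---which is what \Cref{th:min-max-duality}(b), equivalently \Cref{lem:onion-universality}(b) together with \Cref{lem:minimax}, supplies, and which you nearly invoke---one gets $|J^*\setminus S|\ge\delta|H|$ \emph{uniformly} over all admissible $S$. The decay $(1-\eps)^{\delta|H|}$ then crushes $v(H)^{O(v(H))}$ by the weakly-dense hypothesis, and there is no hard regime at all. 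Once the $\delta$-slack is inserted your argument coincides with the paper's; the remaining difference (you organize the first moment by the isomorphism type $S$ rather than the intersection size $\ell=|S|$) is cosmetic and absorbed into the $v(H)^{O(v(H))}$ factor.
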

The lemma is proven by showing that for $p$ below this new $\wt{\varphi}_{q}$ threshold, with high probability there does not exist an isomorphic copy $H'$ of the planted graph $H^*$ in the observed graph $G=H^* \cup G_0, G_0 \sim G(n,p)$ which overlaps with $H^*$ in fewer than $(q-o(1))|H|$ edges. We then prove via the Nishimori identity that the MMSE is equal to one minus the expected overlap between a random isomorphic copy of $H$ and $H^*,$ from which we can directly upper bound the MMSE by $1-q+o(1).$ The proof is deferred to \Cref{sec:ease}.

The intuition behind the variational formula in \Cref{lem:ease} for $\wt{\varphi}_q$ is as follows. In order to avoid having a copy $H'$ of $H$ in $G$ with overlap $\leq q|H|$ with the planted $H^*,$ we need to understand for which noise levels $p$ we can rule out such structures. Let us denote by $S$ this potential intersection set of edges between a copy $H'$ of $H,$ and $H^*.$ Once we fix such an $S,$ a bound on the largest value of $p$ such that no copy of $H$ appears in the observed graph $G$ overlapping with $H^*$ exactly on $S$ can be obtained by a standard first moment method (i.e, a vanilla union bound). Then optimizing the bound to account for the worst-case $S,$ leads to a bound on $p$ to have MMSE at most $1-q.$ However, in many cases this bound will not be tight. To improve it, using the intuition from the tightness of the expectation thresholds from \cite{KK07}, one can fix a subgraph $J\subseteq H$ and instead upper bound for any $p$ the probability that a copy $H'$ of $H$ in $G$ overlaps $H^*$ on $S$ by the probability that \emph{a copy of $J$} in $G$ overlaps with $H^*$ in at most $\leq q |H|$ edges (simply because $J \subseteq H$). Then, the same standard first moment method argument as above can be performed with $J$ in place of $H$ to obtain a new upper bound on the probability that a copy $H'$ of $H$ in $G$ overlaps $H^*$ on $S$, which depends on $J$ and $S$. Now, notice that by optimizing in a worst-case sense over $S$ (to guarantee an MMSE at most $1-q$), and then optimizing over all subgraphs $J$ (now, in favor of the statistician so choosing the ``best-case" $J$), we arrive exactly at the max-min formula for $\wt{\varphi}_q$ giving \Cref{lem:ease}. 

We next describe our second lemma, proved in \Cref{sec:difficulty}, which shows that above the $\varphi_q$ threshold, the MMSE must be worse than $1-q$.

\begin{restatable}[Impossibility of recovery]{lemma}{difficultrecovery}\label{lem:difficulty}
   Suppose $H=H_n$ is a weakly dense $H$. Given $q\in [0,1)$, recall the threshold $\varphi_q$ from \eqref{eq:phi-q}.
    Then for any $p=p_n$ with $ \liminf_{n} p /\varphi_q>1$, it holds
    \begin{align*}
        \liminf_n \MMSE_n(p) \geq 1-q.
    \end{align*}
\end{restatable}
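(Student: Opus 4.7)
The plan is to bound the expected overlap $\E|H^* \cap H^{**}|$ between the planted signal $H^*$ and an independent draw $H^{**}$ from the posterior given $G$, since by Nishimori's identity,
\[ \MMSE_n(p) = 1 - \frac{\E|H^* \cap H^{**}|}{|H|}, \]
so that it suffices to establish $\E|H^* \cap H^{**}| \leq (q+o(1))|H|$. Applying the planting trick of Achlioptas--Coja-Oghlan, the posterior overlap rewrites as a ratio of unconditional moments over two i.i.d.\ uniform copies of $H$ in $K_n$:
\[ \E_{\mathrm{post}}|H^* \cap H^{**}| = \frac{\E_{\mathrm{null}}\big[|H^* \cap H^{**}| \cdot p^{-|H^* \cap H^{**}|}\big]}{\E_{\mathrm{null}}\big[p^{-|H^* \cap H^{**}|}\big]}. \]
This reduces the task to controlling a ratio of null moments reweighted by $p^{-|H^* \cap H^{**}|}$, a factor which strongly biases the (typically small) null overlap distribution towards large overlaps.

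Next, I would partition the sample space by the isomorphism type $S \subseteq H$ of $H^* \cap H^{**}$ and write the ratio as a weighted average of $|S|$ with weights $W_S \defeq P_S \cdot p^{-|S|}$, where $P_S \asymp n^{-v(S)}$ up to $H$-dependent combinatorial factors that are subexponential in $|H|$ by the weakly dense hypothesis. The target $\E_{\mathrm{post}}|H^* \cap H^{**}| \leq (q+o(1))|H|$ then reduces to showing $\sum_{|S| > q|H|} W_S = o\big(\sum_S W_S\big)$, or after a union bound, that each individual $W_S$ with $|S|>q|H|$ is super-exponentially smaller than a single dominant term $W_{S^*}$, where $S^*$ is a minimizer in the definition of $\varphi_q$ (and in particular $|S^*| \leq q|H|$).

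The key quantitative inequality comes directly from the hypothesis $\liminf_n p/\varphi_q > 1$ combined with the optimality of $S^*$: for every $J$ with $S^* \subsetneq J \subseteq H$, one obtains the strict Kahn--Kalai-type inequality
\[ p^{|J \setminus S^*|} \cdot n^{|V(J) \setminus V(S^*)|} \geq (1+\Omega(1))^{|J\setminus S^*|}, \]
which translates into $W_{S^*}/W_J \geq (1+\Omega(1))^{|J|-|S^*|}$. For any $J$ with $|J| > q|H|$ and $J \supseteq S^*$, this gap is super-exponential in $|H|$ and easily beats the $2^{|H|}$ union bound over competing patterns. To extend the argument to patterns $S$ that do not contain $S^*$, I would pass to $J \defeq S \cup S^*$ and bound $W_S$ by $W_J$ up to combinatorial factors that are absorbed by the weakly dense assumption $|H| = \omega(v(H)\log v(H))$. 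This is where the minimax form of $\varphi_q$ is essential: it supplies the optimality condition of $S^*$ against \emph{every} $J \supseteq S^*$, not merely against a chain of refinements.

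The main obstacle I anticipate is making the reduction from a general competing pattern $S$ to a chain-extension $J \supseteq S^*$ quantitatively tight; in particular, bounding $W_S$ in terms of $W_{S \cup S^*}$ requires a combinatorial comparison between the counts $P_S$ and $P_{S \cup S^*}$ that takes into account automorphism factors and the precise structure of overlap patterns. Once this comparison is established, the minimax optimality of $S^*$ applied to $J = S \cup S^*$ closes the argument, with the weakly dense hypothesis absorbing any residual polynomial-in-$|H|$ slack.
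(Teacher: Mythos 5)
Your proposal is in the right spirit — planting trick, bound the weight of high-overlap patterns, use the optimality of $S^*$ — but it has a real gap that makes it fail without an essential additional idea that the paper uses, namely the \emph{genie/side-information reduction} to the planted $S^c$-graph problem (the paper's \Cref{defn:planted-graph-cut} and \Cref{lem:reduction}).

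The central issue is your ``ratio of null moments'' formula. It is not an identity: the posterior overlap is the expectation of a \emph{ratio} $\E_{\text{planted}}[X/Y]$ where $X = \sum_\ell \ell Z_\ell(\wt{G})$ and $Y = Z(\wt{G})$, and this is not equal to $\E[X]/\E[Y]$. The planting trick of Achlioptas--Coja-Oghlan only supplies the one-sided inequality
\[
\E_{\text{planted}}\!\left[\frac{X}{Y}\right]\;\le\; |H|\cdot\P_{\text{planted}}[Y \le \eps\,\E_{\text{null}} Y] \;+\; \frac{1}{\eps}\cdot\frac{\E_{\text{planted}} X}{\E_{\text{null}} Y}\;\le\; |H|\eps+\frac{1}{\eps}\sum_\ell \ell\, W_\ell,
\]
and the $1/\eps$ prefactor is not cosmetic. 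To show $\MMSE_n(p)\ge 1-q$ one needs $\E_{\text{post}}[\text{overlap}]\le (q+o(1))|H|$, which is a bound at the \emph{non-vanishing} level $q$. But the planting-trick inequality can only usefully show that quantities are $o(1)$ — as soon as the first-moment term $\sum_{\ell>qk}W_\ell$ is $\Omega(1)$, the $1/\eps$ factor renders the bound vacuous. And for $p$ near $\varphi_q$ that first moment is in fact huge: taking $S^*=J^{(t)}$, one computes $W_{|S^*|}\asymp n^{-v(S^*)}p^{-|S^*|}=n^{v(S^*)\left(\rho(S^*)/\rho_q-1\right)}$, which is $\gg 1$ whenever $\rho(S^*)>\rho_q$ (the typical situation when $t\ge 1$). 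So the dominant term you rely on to win the comparison is exactly the term that destroys the planting-trick bound; showing $W_S \ll W_{S^*}$ for large $S$ does not help, because the bound needs $W_S$ (not $W_S/W_{S^*}$) to be small.

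The paper circumvents this by the genie reduction: it gives the statistician the copy $S^*$ for free, passes to the planted $S^c$-graph model where \emph{every} posterior copy contains $S^*$, and then bounds the \emph{additional} overlap $\ell'=|(H'\setminus S^*)\cap(H^*\setminus S^*)|$. This shifts the baseline so that the quantity to control is $o(|H|)$, not $\le q|H|$, and the $1/\eps$ slack in the planting trick is then harmless. It also resolves the nuisance you flag about patterns $S\not\supseteq S^*$: after conditioning, all relevant copies contain $S^*$, so the minimax optimality of $S^*$ against all $J\supseteq S^*$ applies directly, without any further reduction to $J=S\cup S^*$. As a side remark, your proposed reduction to $J=S\cup S^*$ also goes the wrong way quantitatively: $W_S/W_J=n^{|V(S^*)\setminus V(S)|}p^{|S^*\setminus S|}$, and bounding this requires an \emph{upper} bound on $p$ of the form $p\le n^{-|V(S^*)\setminus V(S)|/|S^*\setminus S|}$, whereas the hypothesis $p>\varphi_q$ together with $S^*$-optimality only delivers \emph{lower} bounds on $p$ against extensions of $S^*$. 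So this step is not merely unfleshed-out; as stated, the inequality points the wrong way.

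In short, the missing ingredient is the data-processing/genie argument that reduces the problem to a statement where the target quantity is $o(|H|)$ rather than $\le q|H|$. Without it, neither the corrected planting-trick inequality nor the pattern-by-pattern comparison closes the argument.
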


The proof shows that when $p$ crosses $\varphi_q$, the posterior distribution of $H^*$, which is the uniform distribution over the copies of $H$ in the observed graph, is concentrated on copies with overlap less than $q|H|$ with $H^*.$ Our proof follows from an appropriate leverage of the planting trick \cite{achlioptas2008algorithmic} similar to its application in the recent Bayesian proof of the spread lemma \cite{mossel2022second}. The proof is deferred to \Cref{sec:difficulty}.

Here, the intuition for $\varphi_q$ is as follows. Given $S \subseteq H$ with $|S| \leq q |H|$, one can define a statistically easier version of the estimation task of interest, in which the statistician is also given as side information the $S$ part of the planted graph $H^*.$  An application of the data processing inequality implies that the inability to recover a given fraction of $H^*$ in this easier problem implies the same inability in the original statistical problem (Such techniques are often called ``genie" lower bound arguments, e.g, \cite{reeves2013approximate}).

To prove the MMSE lower bound it suffices to determine the noise levels $p$ for which the MMSE is at least $1-q$ when the statistician is given $S$ as side information, and then optimizing over $S$ to find the minimum such noise level. For this reason, we start by fixing any $S$ with $|S| \leq q |H|$ and assume that the $S$ part of $H$ is given to the statistician. Then a simple posterior calculation implies that MMSE is at least $1-|S|/|H| \geq 1-q$, if a typical copy $H'$ (of $H$) in $G = H^* \cup G_0$ that contains $S$ does not overlap an $\Omega(1)$-fraction of edges in $H^* \setminus S$. Equivalently, this is true if most copies $H'$ containing $S$ are such that $H' \setminus S$ is (almost) fully included in $G_0$.

For this reason, it makes sense to study the threshold of $p$ at which many (near-complete) copies of $H\setminus S$ start appearing in $G_0 \sim G(n,p)$. The intuition from the literature on Kahn--Kalai thresholds suggests that this threshold is given by the subgraph expectation threshold of $H \setminus S$, which is precisely the maximizing problem in the definition of $\varphi_q$.
While the tightness of this threshold is still at a conjectural phase and expected to hold only up to a logarithmic error, we prove that the approximate version of it (recall we want $H \setminus S$ to only approximately appear in $G$) that holds exactly above the subgraph expectation threshold. Optimizing over all subgraphs $S$ to determine the best such bound results in $\varphi_q$, the minimum possible subgraph expectation threshold of $H \setminus S$ among all $S$.

\subsection{Exchanging the max and min operators}

To conclude \Cref{thm:wek_dense}, we must relate the thresholds in the two lemmas above. It is somewhat dissatisfying that the natural probabilistic way to argue about \Cref{lem:ease} and \Cref{lem:difficulty} leads to almost the same threshold for $p$, but with the max and the min operators being exchanged in the two results. While there are plenty of ``minimax results" in the literature for exchanging the min and the max operators (e.g., in the literature of game theory and of optimization), no such result appears applicable in our discrete vertex-symmetric setting. Yet, a careful combinatorial argument allows us to indeed exchange the operators. This minimax result is the content of the next lemma, proved in \Cref{sec:dual_pf}, and is proved by employing the onion decomposition of $H$.

\begin{restatable}[Minimax result]{lemma}{minimax} \label{lem:minimax}
    For any $n$, $q=q_n \in [0,1)$, and $H=H_n$, $\wt{\varphi}_q = \varphi_q$.
\end{restatable}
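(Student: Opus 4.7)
My plan is to deduce $\wt{\varphi}_q = \varphi_q$ from the explicit form of $\varphi_q$ given by \Cref{lem:onion-universality}(b). The easy direction $\wt{\varphi}_q \le \varphi_q$ is just the standard max-min inequality, applied inside the monotone function $x \mapsto n^{-1/x}$. For the reverse direction, let $t = t(q)$ be the index with $|J^{(t)}| \le q|H| < |J^{(t+1)}|$ and write $\alpha_{t+1} \defeq \rho(J^{(t+1)}|J^{(t)})$, so that \Cref{lem:onion-universality}(b) gives $\varphi_q = n^{-1/\alpha_{t+1}}$. It then suffices to exhibit one subgraph $J^* \subseteq H$ with $\rho(J^*|S) \ge \alpha_{t+1}$ for every $S \subseteq J^*$ having $|S| \le q|H|$. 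My witness is $J^* = J^{(t+1)}$; since $|S| \le q|H| < |J^{(t+1)}|$, every admissible $S$ is automatically a \emph{proper} subgraph of $J^{(t+1)}$, so everything reduces to a single structural claim.

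Key claim: for every layer $t \ge 1$ and every proper subgraph $S \subsetneq J^{(t)}$, one has $\rho(J^{(t)}|S) \ge \alpha_t$; informally, each onion layer is \emph{uniformly} as dense (in the graph-cut sense) as the rate $\alpha_t$ at which it was peeled off, regardless of which subset $S$ of its edges one tries to delete. I expect this to be the main combinatorial obstacle, and I plan to prove it by induction on $t$. The base case $t=1$ is just the maximality of $\rho$ over $J \subseteq J^{(1)}$: the inequality $\rho(S) \le \alpha_1$ rearranges algebraically to $\rho(J^{(1)}|S) \ge \alpha_1$, with the degenerate case $V(S) = V(J^{(1)})$ covered by the $\rho = +\infty$ convention.

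For the inductive step ($t \ge 2$), given $S \subsetneq J^{(t)}$ I split $S = T \sqcup U$ with $T \defeq S \cap J^{(t-1)}$ and $U \defeq S \setminus J^{(t-1)}$, and decompose
\[
|J^{(t)}| - |S| = \lprp{|J^{(t-1)}| - |T|} + \lprp{|J^{(t)} \setminus J^{(t-1)}| - |U|}.
\]
The first summand I bound via induction on $J^{(t-1)}$: if $T = J^{(t-1)}$ it vanishes, otherwise $T \subsetneq J^{(t-1)}$ and induction combined with the monotonicity \Cref{lem:monoton_onion} gives $|J^{(t-1)}| - |T| \ge \alpha_{t-1}(|V(J^{(t-1)})| - |V(T)|) \ge \alpha_t (|V(J^{(t-1)})| - |V(T)|)$. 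The second summand I bound via the balancedness of the onion step $J^{(t)}|J^{(t-1)}$ applied to the intermediate subgraph $J^{(t-1)} \cup U \subseteq J^{(t)}$ (the very property already used in the proof of \Cref{lem:onion-universality}(b)): this yields $|U| \le \alpha_t |V(U) \setminus V(J^{(t-1)})|$, which combined with the identity $|J^{(t)} \setminus J^{(t-1)}| = \alpha_t |V(J^{(t)}) \setminus V(J^{(t-1)})|$ gives $|J^{(t)} \setminus J^{(t-1)}| - |U| \ge \alpha_t |V(J^{(t)}) \setminus (V(J^{(t-1)}) \cup V(U))|$. Summing, and noting that $V(J^{(t)}) \setminus V(S)$ splits disjointly into a part inside $V(J^{(t-1)})$ (contained in $V(J^{(t-1)}) \setminus V(T)$, since $V(T) \subseteq V(J^{(t-1)})$) and a part outside $V(J^{(t-1)})$ (equal to $V(J^{(t)}) \setminus (V(J^{(t-1)}) \cup V(U))$), we arrive at $|J^{(t)}| - |S| \ge \alpha_t |V(J^{(t)}) \setminus V(S)|$, which is exactly $\rho(J^{(t)}|S) \ge \alpha_t$. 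This closes the induction and hence the lemma.
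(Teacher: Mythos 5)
The hard direction ($\wt{\varphi}_q \ge \varphi_q$) of your proof is essentially correct and is built on the same pillars as the paper's: identify the optimal pair $(J^{(t)},J^{(t+1)})$ via \Cref{lem:onion-universality}(b), then bound $\rho(J^{(t+1)}|S)$ from below for every admissible $S$ by decomposing $S$ across onion layers and invoking the balancedness of each layer together with the monotonicity of layer densities (\Cref{lem:monoton_onion}, \Cref{lem:balanced-density-inequality}). The paper does this as a single-shot telescoping over $S_i = S \cap J^{(i)}$; you unroll the same telescoping into an induction on $t$. That is a genuine but minor stylistic difference—both arguments reduce to the same inequalities.

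The gap is in the ``easy'' direction. You assert that $\wt{\varphi}_q \le \varphi_q$ is ``just the standard max-min inequality,'' but that inequality is stated for a \emph{product} feasible set, and the feasible set here is $\{(S,J): S\subseteq J\subseteq H,\, |S|\le q|H|\}$, which is not a rectangle: in $\varphi_q$ the inner $J$ is constrained to contain the outer $S$, while in $\wt{\varphi}_q$ the inner $S$ is constrained to lie inside the outer $J$. If $\hat J$ attains the outer max in $\wt{\rho}_q$ and $\hat S$ attains the outer min in $\rho_q$, there is no reason that $\hat S \subseteq \hat J$, so you cannot plug the same pair into both sides. The weak-duality step genuinely needs a structural fact about $\rho$: the paper proves it by taking a $\wt{\rho}_q$-optimal pair $(S^*,J^*)$ and, for an arbitrary admissible $S$, setting $J' = J^*\cup S$ and $T = J^*\cap S$, then applying \Cref{lem:density-inequality-modularity} to get $\rho(J'|S) \ge \rho(J^*|T) \ge \rho(J^*|S^*)$, where the last inequality uses that $T$ is itself an admissible $S$ for the inner min at $J = J^*$. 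You should not elide this; without the supermodularity-type inequality for $\rho$, the easy direction is unsubstantiated.
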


We remark that the duality lemma above is an exact, non-asymptotic equality: it holds for all graphs $H=H_n$ and for all $n$. In order to prove it, our main tool is a structural result for the max-min and min-max optimal pairs $(S,J)$ in $\varphi_q$. Observe that the pair $(S,J)$ can take exponentially many values. Yet, by considering the onion decomposition $(J^{(t)})$ of $H$ (\Cref{dfn:onion}), it turns out that, as explained in \Cref{sec:duality}, the minimax optimization defining $\varphi_q$ has a simple optimizing pair: it is exactly of the form $(S = J^{(t)}, J = J^{(t+1)})$, where $t$ is maximal such that $|J^{(t)}|\leq q|H|$. For instance, for $q$ close to 1, the optimal pair is to choose $S=J^{(1)}$, the densest subgraph of $H$ and $J=J^{(2)}$ the ``second'' densest subgraph in the onion decomposition. The knowledge of the optimizers greatly simplifies $\varphi_q$ and is key to establishing \Cref{lem:minimax}.

\subsection{Putting it all together}
Notice that the above lemmas immediately imply \Cref{thm:wek_dense}.
\begin{proof}[Proof of \Cref{thm:wek_dense}]
    Parts (a) and (b) follow immediately from \Cref{lem:ease,lem:difficulty,lem:minimax}. 
    
    The last part follows from the fact that $\MMSE_n(p)$ is a non-increasing polynomial function $p$ (see \Cref{lem:MMSE_poly}) together with a simple analysis argument to construct the desired sequence. Indeed, for any $\epsilon>0,$ and for large enough $n$, parts (a), (b) imply
    \[\MMSE((1-\epsilon)\varphi_q) \leq 1-q+\epsilon\] and \[\MMSE((1+\epsilon)\varphi_q) \geq 1-q-\epsilon.\] That means that for any $\epsilon>0$ there exists $n_0=n_0(\epsilon)$ large enough such that for all $n \geq n_0$ there exists $p=p_n \in [(1-\epsilon)\varphi_q,(1+\epsilon)\varphi_q]$ for which \[1-q-\epsilon \leq \MMSE(p) \leq 1-q+\epsilon.\]
    Iterating this argument for $\epsilon=1/m,m=1,2,\ldots$ leads to (potentially distinct) sequences of noise levels $p^{(m)}=p^{(m)}_n$ such that for all $m$, there exists a (without loss of generality strictly increasing) $G(m) \in \mathbb{N}$ such that for all $n \geq G(m),$ $p^{(m)}_n \in [(1-1/m)\varphi_q,(1+1/m)\varphi_q]$ and \[1-q-1/m \leq \MMSE(p^{(m)}_n) \leq 1-q+1/m.\]Now, ``diagonalizing" this argument, consider the sequence defined as follows: for each $n \in [G(m),G(m+1))$, let \[p=p_n\defeq p^{(m)}_{n}\] (for $n<G(1),$ set $p_n=0$). Then notice that clearly for this sequence it holds $\lim_n p_n/\varphi_q =1$ and also $\lim_n \MMSE_n(p_n)=1-q$ as desired.
\end{proof}

\subsection{Proof of \Cref{lem:ease}: Recovery for $p \le (1-\eps) \wt{\varphi}_{q}$}

\label{sec:ease}


In this section, we prove \Cref{lem:ease}. In particular, we assume that for some $\varepsilon>0$ and $n$ large enough it holds $p \leq (1-\varepsilon)\varphi_q$.

\begin{proof}
    Fix $\delta > 0$ to be a constant and set $k\defeq|H|$. It suffices to show that the $\MMSE_n(p)$ is at most \[1-q+\delta+o(1).\]  Since $\delta$ is arbitrary, we may eventually take $\delta$ to $0$ to obtain the desired statement.

    Observe that by Bayes' rule, the posterior of the model is simply the uniform distribution over all copies of $H$ in the observed graph. Using the Nishimori identity (\Cref{lem:Nishimori}), we have
    \[ 1-\MMSE_n(p) = \frac{1}{k} \E_{\substack{\bG \sim G(n,p) \\ H^* \sim \text{uniform copy of $H$ in $K_n$}}}\left[ \E_{H' \sim \text{copies of $H$ in $\bG \cup H^*$}} |H' \cap H^*| \right]. \]
    We bound this as follows,
    \begin{align*}
        1-\MMSE_n(p) &\ge (q-\delta) - \frac{1}{k} \E_{\substack{\bG \sim G(n,p) \\ H^* \sim \text{uniform copy of $H$ in $K_n$}}}\left[ \E_{H' \sim \text{copies of $H$ in $\bG \cup H^*$}} |H' \cap H^*| \cdot \bone_{|H' \cap H^*| \le (q-\delta)k} \right] \\
            &\ge (q-\delta) - \E_{\substack{\bG \sim G(n,p) \\ H^* \sim \text{uniform copy of $H$ in $K_n$}}}\left[ \E_{H' \sim \text{copies of $H$ in $\bG \cup H^*$}} \bone_{|H' \cap H^*| \le (q-\delta)k} \right] \\
            &\ge (q-\delta) - \mathbb{P}_{\bG,H^*}\left[ \text{$\exists$ a copy $H'$ of $H$ in $G$ such that $|H^* \cap H'| \le (q-\delta)k$} \right]
    \end{align*}
    It suffices to show that  \[\mathbb{P}_{\bG,H^*}\left[ \text{$\exists$ a copy $H'$ of $H$ in $G$ such that $|H^* \cap H'| \le (q-\delta)k$} \right]=o(1).\] For starters, let $J^*$ be an optimal choice of $J$ in the definition of $\wt{\varphi}_q$. Then, using the notation $\cong$ to denote the graph isomorphism relation, we have
    \begin{multline*}
        \mathbb{P} \left[ \text{$\exists$ $H' \subseteq G \cup H^*$ such that $H' \cong H$ and $|H^* \cap H'| \le (q-\delta)k$} \right] \\ \le \mathbb{P} \left[ \text{$\exists$ $J' \subseteq G \cup H^*$ such that $J' \cong J^*$ and $|H^* \cap J'| \le (q-\delta)k$} \right]
    \end{multline*}
    This can be bounded further by Markov's inequality and the definition of the $G(n,p)$ noise to give,
    \begin{align*}
        &\mathbb{P} \left[ \text{$\exists$ $J' \subseteq G \cup H^*$ such that $J' \cong J^*$ and $|H^* \cap J'| \le (q-\delta)k$} \right]\\
        &\qquad\qquad\qquad\qquad\le \sum_{\ell \le (q-\delta)k} \E[ \left| \text{$J' \subseteq G \cup H^*$ such that $J' \cong J^*$ and $|H^* \cap J'| = \ell$} \right| ] \\
        &\qquad\qquad\qquad\qquad\le \sum_{\ell \le (q-\delta)k} \mathbb{P}_{J',H' \sim K_n} \left[ |J' \cap H'| = \ell \right] \cdot M_J p^{L-\ell}, 
    \end{align*}
    where $M_J$ is the number of copies of $J$ in $K_n$, and $L = |J| \ge qk$ (by \Cref{lem:onion-universality}(b)) and by $\mathbb{P}_{J',H' \sim K_n}$ we refer to the joint measure of $J'$, and copy of $J^*$ in $K_n$ and $H'$ an independent copy of $H$ in $K_n$. 
    
    By employing \cite[Lemma 4.4]{MNSSZ23},
    \[ \mathbb{P}_{J',H' \sim K_n} \left[ |J' \cap H'| = \ell \right] \le v(H)^{O(v(H))} \cdot n^{-v_\ell(J)}, \]
    where $v_\ell(J)$ is the minimal number of vertices in a subgraph of $J$ with $\ell$ edges.
    Therefore,
    \[ \MMSE_n(p) - (1-q+\delta) \le  v(H)^{O(v(H))} \sum_{\ell \le (q-\delta)k} \cdot n^{-v_\ell(J)} \cdot M_J p^{L-\ell} \]
    Now observing that $M_J = (n)_{v(J)}/|\mathrm{Aut}(H)| \leq n^{v(J)}$, we get that
    \[ \MMSE_n(p) - (1-q+\delta) \le \sum_{\ell \le (q-\delta)k} v(H)^{O(v(H))} \cdot n^{v(J) - v_\ell(J)} \cdot p^{L-\ell}. \]
    By the definitions of $\wt{\varphi}_{q}$ and $J^*$, we have that for all $\ell \leq (q-\delta)k$,
    \[ \wt{\varphi}_{q} \le n^{- \frac{v(J) - v_\ell(J)}{L-\ell}}. \]
    Thus, for large enough $n$, since $p \leq (1-\epsilon)\wt{\varphi}_{q},$ we also have 
    \[ \MMSE_n(p) - (1-q+\delta) \le  v(H)^{O(v(H))} \sum_{\ell \le (q-\delta)k}  (1-\eps)^{L-\ell} \le  v(H)^{O(v(H))} k (1-\eps)^{\delta k} = o(1), \]
    as desired.

\end{proof}

\subsection{Proof of \Cref{lem:difficulty}: Impossibility of recovery for $p\ge (1+\eps)\varphi_q$}

\label{sec:difficulty}

In this section, we prove \Cref{lem:difficulty}. The proof of this lemma will essentially be a formalization of the intuition mentioned above.

By definition of $\varphi_q$, there exists some $S \subseteq H$ with $|S| \le qk$, for which we have 
\begin{align*}
    p &\geq (1+\eps) \cdot \max\left\{ n^{-\frac{|V(T) \setminus V(S)|}{|T \setminus S|}} : T \subseteq H \right\}.
\end{align*}
Let $\delta > 0$ be a fixed but arbitrary small constant, which we will eventually shrink it to $0$. In particular, it must hold
\begin{align*} 
    p &\geq (1+\eps) \cdot \max\left\{ n^{-\frac{|V(T) \setminus V(S)|}{|T \setminus S|}} : T \subseteq H , |T\setminus S|\geq \delta k.\right\}.
\end{align*}
Our goal will be to show that, for all $p$ satisfying the inequality above, it will be impossible to recover better than a $\delta$-fraction of the set of edges $S^c.$ This will imply by standard Bayesian tools that
\[
\MMSE_n(p)\geq 1-q - O(\delta)-o(1),
\]
and thus \Cref{lem:difficulty}, since $\delta$ can be taken arbitrarily small.

More concretely, our technique will be to show that even if the statistician is given the $S^*$-part of the planted graph $H^*$ as side information, it will be impossible for the statistician to recover an $\Omega(1)$ fraction of the rest of $H^* \setminus S^*$; in particular,  with side information the statistician must miss at least a $1-q-o(1)$ fraction of $H*$, and then via a data processing argument, we can conclude the same for the original setting without side information.





To define this alternate problem, consider the following inference problem defined on slightly modified graphs where the edges of a particular subgraph of $H$ is removed from the observations.

\begin{definition}[Planted $S^c$-graph]\label{defn:planted-graph-cut}
    Let $H=H_n$ be a graph and $H^*$ be a uniformly random draw from the isomorphism class of $H$ in $K_n$. Let also $S$ be a subgraph of $H$ and $S^*$ an arbitrary copy of $S$ in $H^*$. The \emph{planted $S^c$-graph} problem is the task to infer $H^* \setminus S^*$ given a draw from $G=(H^* \setminus S^*) \cup G_0$, where $G_0$ is the product Bernoulli measure with parameter $p$ only on the edges of $K_n \setminus S^*$. Moreover, we define the MMSE of the problem,
\[ \MMSE_{n,S}(p)=\frac{1}{|H|- |S|}\E[ \|1_{H^* \setminus S^*}-\E[1_{H^* \setminus S^*}\|^2_2 |G]. \]
\end{definition}
Notice that in the planted $S^c$ graph problem, the observed $G$ is only supported on the edges of $K_n \setminus S^*$.

We start with the following reduction from the original problem to the planted graph-cut problem for $S$, at an MMSE loss of a multiplicative factor $1-q.$ 
\begin{lemma}[Reduction]
    \label{lem:reduction}
    Let $S \subseteq H$ with $|S| \leq q|H|$. Recall $\MMSE_n(p)$ is the MMSE of the original planted problem and $\MMSE_{n,S}(p)$ is the MMSE in the planted $S^c$-graph-cut problem. Then it holds \[\MMSE_{n,S}(p)(1-q) \le \MMSE_n(p).\]
\end{lemma}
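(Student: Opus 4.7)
The plan is to proceed by a \emph{genie} (side-information) argument. The idea is to take the original planted problem and hand the statistician $S^*$ for free: this can only help them, so the resulting MMSE bounds $\MMSE_n(p)$ from below, and on the other hand the resulting problem is (distributionally) the planted $S^c$-graph problem, so the resulting MMSE is exactly $\MMSE_{n,S}(p)$ (up to the correct normalization). Couple $(H^*,S^*)$ in both models using the same joint law, e.g.\ first draw $H^*$ uniformly among copies of $H$ in $K_n$ and then select a copy $S^*$ of $S$ in $H^*$ in any prescribed (possibly random) way; this is allowed since the statement of \Cref{defn:planted-graph-cut} leaves the choice arbitrary.

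First, by the law of total variance (equivalently, conditioning on more can only reduce Bayes risk),
\[
    \E\!\left[\left\|\mathbf{1}_{H^*}-\E[\mathbf{1}_{H^*}\mid G,S^*]\right\|_2^2\right] \;\le\; \E\!\left[\left\|\mathbf{1}_{H^*}-\E[\mathbf{1}_{H^*}\mid G]\right\|_2^2\right] \;=\; |H|\cdot \MMSE_n(p).
\]
Next, I would decompose $\mathbf{1}_{H^*}=\mathbf{1}_{S^*}+\mathbf{1}_{H^*\setminus S^*}$ (the two indicator vectors have disjoint supports). Since $S^*$ is given, $\E[\mathbf{1}_{S^*}\mid G,S^*]=\mathbf{1}_{S^*}$, so the left-hand side above collapses to
\[
    \E\!\left[\left\|\mathbf{1}_{H^*\setminus S^*}-\E[\mathbf{1}_{H^*\setminus S^*}\mid G,S^*]\right\|_2^2\right].
\]

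Now I would identify this quantity with the unnormalized MMSE of the planted $S^c$-graph problem. Writing $G=H^*\cup G_0$ with $G_0\sim G(n,p)$, note that $G|_{S^*}\equiv \mathbf{1}_{S^*}$ because $S^*\subseteq H^*$; hence, conditional on $S^*$, the edges of $G$ inside $S^*$ carry no information, and $\E[\cdot\mid G,S^*]=\E[\cdot\mid G|_{K_n\setminus S^*},S^*]$. On the other hand,
\[
G|_{K_n\setminus S^*} \;=\; (H^*\setminus S^*)\cup (G_0|_{K_n\setminus S^*}),
\]
where $G_0|_{K_n\setminus S^*}$ is product-Bernoulli$(p)$ on $K_n\setminus S^*$. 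Thus the triple $(H^*,S^*,G|_{K_n\setminus S^*})$ has exactly the joint distribution of $(H^*,S^*,G_{S^c})$ in the planted $S^c$-graph model, giving
\[
\E\!\left[\left\|\mathbf{1}_{H^*\setminus S^*}-\E[\mathbf{1}_{H^*\setminus S^*}\mid G,S^*]\right\|_2^2\right] \;=\; (|H|-|S|)\cdot \MMSE_{n,S}(p).
\]

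Combining the three displays yields $(|H|-|S|)\,\MMSE_{n,S}(p)\le |H|\,\MMSE_n(p)$, and the hypothesis $|S|\le q|H|$ gives $|H|-|S|\ge (1-q)|H|$, so $(1-q)\MMSE_{n,S}(p)\le \MMSE_n(p)$ as required. The argument is essentially bookkeeping; the only mildly delicate point is the second step, namely verifying that after conditioning on $S^*$ the law of the observation in the original model coincides with that in \Cref{defn:planted-graph-cut}, which rests on $S^*\subseteq H^*$ (so $G|_{S^*}$ is deterministic) and on the independence of the coordinates of $G_0$.
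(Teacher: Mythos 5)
Your proof is correct and follows essentially the same route as the paper: a genie/data-processing step (conditioning on $S^*$ can only reduce Bayes risk, which the paper phrases as restricting the prior to copies with $S(H^*)=S_0$ and invoking the law of total variance), followed by identifying the residual problem as the planted $S^c$-graph problem. The only cosmetic difference is that you extract the factor $|H|-|S|$ via the direct orthogonal decomposition $\mathbf{1}_{H^*}=\mathbf{1}_{S^*}+\mathbf{1}_{H^*\setminus S^*}$, whereas the paper does the same bookkeeping through two applications of the Nishimori identity; both give the same identity and your route is arguably slightly more elementary.
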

\begin{proof}
    Given a copy $\wt{H}$ of $H$, let us denote by $S(\wt{H})$ the part of $\wt{H}$ corresponding to a copy of $S$ (resolving ties arbitrarily). Notice that for any fixed copy $S_0$ of $S$ in $K_n,$ by the law of total variance we have
    \begin{align*}
        \MMSE_{n}(p)=\frac{1}{|H|}\E \mathrm{Var}(H^*|G) \geq \frac{1}{|H|} \E \mathrm{Var}(H^*|G,S\wt(H)=S_0)
    \end{align*}
    But now consider the statistical model where the prior is uniform over all copies $H'$ of $H$ so that $S(H')=S_0,$ and the noise is as usual an independent instance of $G(n,p).$ The posterior $\mathbb{P}'_{S_0}$ in this model is uniform among all copies $H'$ of $H$ in the observed graph with $S(H')=S_0$, and therefore the \emph{unnormalized} MMSE in this model satisfies
    \[\MMSE'_{S_0,n}(p)=\E \mathrm{Var}(H^*|G,S\wt(H)=S_0).\]
    In particular, \[\frac{1}{|H|} \MMSE'_{S_0,n}(p) \leq \MMSE_n(p).\] Moreover clearly the posterior in the planted $S_0^c$-graph model is the marginal of $\mathbb{P}'_{S_0}$ on $K_n \setminus S_0$. Therefore by two applications of the Nishimori identity \Cref{lem:Nishimori} (identifying graphs on $n$ vertices with vectors in $\{0,1\}^{\binom{n}{2}}),$ 
    \[\MMSE'_{S_0,n}(p)=|H|-\E_{v,v' \sim^{\mathrm{i.i.d.}} \mathbb{P'}_{S_0}} \langle v,v' \rangle=|H|-(|S|+\E_{v,v' \sim^{\mathrm{i.i.d.}} \mathbb{P}_S} \langle v,v' \rangle )=(|H|-|S|)\MMSE_{S_0,n}(p) \]The second equality follows because the bits about the edges of $S_0$ are always equal to one under $\mathbb{P}'_{S_0}.$ The result follows since for any copy $S_0$ of $S$, clearly $\MMSE_{S_0,n}(p)=\MMSE_{S,n}(p),$ and of course $|H|-|S| \geq (1-q)|H|.$

\end{proof}

Given \Cref{lem:reduction}, to complete the proof of impossibility of recovery, we require the following impossibility result for the planted graph-cut problem.

\begin{restatable}{lemma}{plantedgraphcutdifficulty}
    \label{lem:post-reduction-lower-bd}
    Let $q \in [0,1),$ $H=H_n$ a weakly dense graph and $S=S_n \subseteq H, |S| \leq q |H|.$ Consider the planted $S^c$-graph problem for the planted graph $H$. Then, assume that for some constants $\epsilon, \delta>0,$ we have for large enough $n$,
    \[ 
    p \geq (1+\eps)\max\left\{ n^{-\frac{|V(T) \setminus V(S)|}{|T \setminus S|}} : T \subseteq H , |T\setminus S|\geq \delta |H|.\right\}.
    \]
    Then \[\MMSE_{S,n}(p) \geq 1-\delta/(1-q)-o(1).\]
\end{restatable}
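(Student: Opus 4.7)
The plan is to apply the Nishimori identity and the planting trick to reduce the MMSE lower bound to a first-moment calculation over pairs of copies of $H$, in the spirit of the Bayesian spread-lemma proof of~\cite{mossel2022second}. Write $k = |H|$ and $m = |H|-|S| \geq (1-q)k$. The posterior of $H^*$ given $G$ in the planted $S^c$-graph problem is uniform over
\[ \mathcal{H}(G) \defeq \{H' : H' \cong H,\ S^* \subseteq H',\ H' \setminus S^* \subseteq G\}, \]
with $N(G) = |\mathcal{H}(G)|$. By the Nishimori identity (\Cref{lem:Nishimori}), it suffices to prove that $\E_{\mathrm{planted}}[|H_1 \cap H_2 \setminus S^*|] \leq \delta k + o(k)$ for two i.i.d.\ posterior samples $H_1, H_2$, which gives $\MMSE_{S,n}(p) \geq 1 - \delta/(1-q) - o(1)$ upon dividing by $m$.

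Using the planting-trick change of measure, any planted expectation rewrites as an $N(G)/\E_{\mathrm{null}}[N(G)]$-reweighted null expectation under the null model $G \sim \Ber(p)^{\otimes K_n \setminus S^*}$, where $\E_{\mathrm{null}}[N(G)] = Mp^m$ and $M$ is the number of copies of $H$ in $K_n$ containing $S^*$. I would decompose $\E_{\mathrm{planted}}[|H_1 \cap H_2 \setminus S^*|]$ by $\ell = |H_1 \cap H_2 \setminus S^*|$ into three regimes: (i) the diagonal $H_1 = H_2$, (ii) off-diagonal low-overlap $\ell \leq \delta k$, and (iii) off-diagonal high-overlap $\ell > \delta k$. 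Regime (ii) contributes at most $\delta k$ trivially. For (i), $\mathbb{P}_{\mathrm{planted}}[H_1 = H_2] \leq 1/\E_{\mathrm{null}}[N(G)]$; applying the threshold hypothesis to $T = H$ (valid since $|H \setminus S| = m \geq (1-q)k \geq \delta k$ in the relevant range $\delta \leq 1-q$) yields $\E_{\mathrm{null}}[N(G)] \geq (1+\eps)^m/|\Aut(H,S)| = \omega(m)$ by the weak-density hypothesis $|H| \gg v(H)\log v(H)$, so the diagonal contributes $o(k)$.

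Regime (iii) is the main substance. I would bound $\mathbb{P}_{\mathrm{planted}}[H_1 \neq H_2,\ \ell > \delta k]$ using the combinatorial estimate \cite[Lemma 4.4]{MNSSZ23} adapted to the graph-cut setting, which controls the number of ordered pairs of copies of $H$ containing $S^*$ with overlap $\ell$ outside $S^*$ by $v(H)^{O(v(H))}\cdot M\cdot n^{v(H)-v(S)-v_\ell^*}$, where $v_\ell^* = \min\{|V(T) \setminus V(S)| : S \subseteq T \subseteq H,\,|T\setminus S|=\ell\}$. The threshold hypothesis $p^\ell \geq (1+\eps)^\ell n^{-v_\ell^*}$ (valid for $\ell > \delta k = \delta|H|$) then ensures that each $\ell$-contribution to the probability, relative to $\E_{\mathrm{null}}[N(G)]$, is bounded by $v(H)^{O(v(H))}(1+\eps)^{-\ell}$; summing a geometric series over $\ell > \delta k$ gives a total of $O(v(H)^{O(v(H))} k (1+\eps)^{-\delta k})$, which is $o(1/k)$ by weak density. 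Multiplying by $k$ yields an $o(1)$ contribution to the overlap, and combining all three regimes completes the proof.

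The main technical obstacle will be controlling the $1/N(G)$ normalization cleanly in regime (iii): the naive bound $1/N(G) \leq 1$ suffices only when $\E_{\mathrm{null}}[N(G)]$ is not too large, but in the regime where $\E_{\mathrm{null}}[N(G)] = \omega(1)$ one needs a Paley-Zygmund style estimate $N(G) \gtrsim \E_{\mathrm{null}}[N(G)]$ with high probability, replacing $1/N(G)$ by $O(1/\E_{\mathrm{null}}[N(G)])$. This requires a second-moment calculation showing $\mathrm{Var}_{\mathrm{null}}(N(G)) = o(\E_{\mathrm{null}}[N(G)]^2)$, which in turn uses the same combinatorial pair-counting estimate across all overlap values $\ell \in [0, m]$ and relies crucially on the weak-density hypothesis to absorb the $v(H)^{O(v(H))}$ and $|\Aut(H,S)|$ factors into the exponentially small geometric factor $(1+\eps)^{-\Omega(\ell)}$.
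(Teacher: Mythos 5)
Your high-level skeleton (Nishimori identity, planting-trick change of measure, the hypergeometric pair-counting estimate from \cite[Lemma 4.4]{MNSSZ23}, and a geometric-series sum over overlaps $\ell>\delta k$) matches the paper's. But the step you flag as your ``main technical obstacle'' is handled incorrectly, and the fix you propose would not work. You want to pass from $1/N(G)$ to $O(1/\E_{\mathrm{null}}[N(G)])$ by proving a Paley--Zygmund concentration bound $\mathrm{Var}_{\mathrm{null}}(N(G)) = o(\E_{\mathrm{null}}[N(G)]^2)$. This is both unnecessary and, for general weakly dense $H$, unavailable. The second-moment ratio $\E_{\mathrm{null}}[N(G)^2]/\E_{\mathrm{null}}[N(G)]^2$ decomposes into overlap terms $\sum_\ell \mathrm{P}_{H'}(\text{overlap}=\ell)/p^\ell$, and the hypothesis on $p$ only controls terms with $\ell \geq \delta|H|$; for small $\ell$ the ratio can blow up if $H$ has a very dense small subgraph. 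Indeed, tightness of the subgraph expectation threshold, which is what a second-moment calculation would essentially amount to here, is precisely the open subgraph Kahn--Kalai conjecture, and the paper explicitly refrains from assuming it.

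The correct move, which the paper uses, is to observe that the change-of-measure identity you already invoked directly gives $\P_{\mathrm{planted}}[N(\tilde G) \leq \eps\, \E_{\mathrm{null}}[N(G)]] = \E_{\mathrm{null}}\bigl[\tfrac{N(G)}{\E_{\mathrm{null}}[N(G)]}\cdot \one\{N(G) \leq \eps\, \E_{\mathrm{null}}[N(G)]\}\bigr] \leq \eps$ for every $\eps>0$, with no variance control whatsoever. (This is the entire content of the planting trick: the planted measure is the size-biased tilt of the null, so it automatically avoids the small-$N(G)$ event.) With that bound in hand, one splits the event $\{N(\tilde G) \leq \eps\,\E_{\mathrm{null}}[N(G)]\}$ off at negligible cost $\eps$, and on its complement one can safely replace $1/N(\tilde G)$ by $1/(\eps\,\E_{\mathrm{null}}[N(G)])$; the rest of your regime (iii) argument then goes through unchanged and $\eps$ can be sent to zero at the end. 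Absent this, your proposal has a genuine hole exactly where you anticipated difficulty: the normalization step is not proved, and the route via $\mathrm{Var}_{\mathrm{null}}(N(G))$ cannot be salvaged under the hypotheses of the lemma.
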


\begin{proof}Let $k\defeq|H|-|S|$. Let also $\wt{G} =  (H^*\setminus S) \cup G_0$ be a sample from the planted $S^c$-graph distribution as defined in \Cref{defn:planted-graph-cut}. In particular, let $H^*$ the planted copy of $H$ and $S^*$ the (arbitrarily chosen) copy of $S$ in $H^*.$

First notice that it suffices to show that
\begin{align}\label{eq:goal_1}
        \P\left[\frac{|(H' \setminus S^*) \cap (H^* \setminus S^*)|}{k} \geq \delta/(1-q) \right] &= o_N(1),
    \end{align}
where $H'$ is a uniformly random isomorphic copy of $H$ containing $S^*$ that is present in the graph $\wt{G} \cup S^*.$ Indeed, a simple application of Baye's rule implies that the posterior of the planted $S^c$-graph problem for $H$ is uniform on all $H' \setminus S^*$ where $H'$ is a copy of $H$ in $\wt{G} \cup S^*$ containing $S^*$. Given that \eqref{eq:goal_1} and Nishimori's identity (\Cref{lem:Nishimori}) imply the result.
    
    In the calculation below, we always condition on $S^*.$ For convenience, we introduce some new notation. We denote by $(S^*)^c$-graph, any graph on edges only in $K_n \setminus S^*.$ We define $G'$ to be an independent sample from the product $\mathrm{Bernoulli}(p)$ measure on $K_n \setminus S^*$ (that is $G'$ has no planted copy $H^* \setminus S^*$) and $\delta'\defeq\delta/(1-q)$. 
    We also denote for an $(S^*)^c$-graph $G$,
    \begin{itemize}
        \item[(a)] by $Z(G)$ the number of isomorphic copies $H'$ of $H $ containing $S^*$ in $G \cup S^*,$
        \item[(b)] by $Z_\ell(H^*,G)$ the number of copies $H'$ of $H $ containing $S^*$ in $G \cup S^*$ with \[|(H' \setminus S^*) \cap (H^* \setminus S^*)|=\ell, \]
        and
        \item[(c)] by $Z^2(\ell,G)$ is the number of pairs of copies $H',H''$ of $H$ containing $S^*$ in $G \cup S^*$ with  \[|(H' \setminus S^*) \cap (H'' \setminus S^*)|=\ell.\]
    \end{itemize}

For any $\eps>0$ We have that by direct calculations (similar to the proof of \cite[Lemma 3.11]{MNSSZ23} where the universe are all edges minus the edges in $S$)
    \begin{align*}
        \P\left[\frac{|(H' \setminus S^*) \cap (H^* \setminus S^*)|}{k} \geq \delta' \right] &= \E\left[\frac{1}{Z(\wt{G})}\sum_{\ell\geq k\delta'} Z_\ell(H', \wt{G})\right] \\
          &\leq \P[Z(\wt{G})\leq \eps \E Z(G')] + \frac{1}{\eps \E Z(G')}\E \sum_{\ell \geq k\delta'}Z_\ell(H^*, \wt{G}) \\
          &\leq \eps + \frac{1}{\eps \E Z(G')}\E \sum_{\ell \geq k\delta'}Z_\ell(H^*, \wt{G}) \\
          &\leq \eps + \frac{1}{\eps (\E Z(G'))^2}\E \sum_{\ell \geq k\delta'}Z^2(\ell, G') \\
          &= \eps + \frac{1}{\eps} \sum_{\ell\geq k\delta'} \frac{\mathrm{P}_{H'}(|(H' \setminus S^*) \cap (H^* \setminus S^*)| = \ell)}{p^\ell}
    \end{align*}
    where in the two last inequalities we used a Bayesian inference idea called the ``planting trick" \cite{achlioptas2008algorithmic}, as executed in \cite[Lemmas 3.9, 3.10]{MNSSZ23}, which allows us to pass from the planted observation graph $\tilde{G}$ to the ``null" graph $G'$. Moreover, in the last equality under $\mathrm{P}_{H'}$, $H'$ is a uniform copy random isomorphic copy of $H$ in $K_n$ containing $S^*$ and $H^*$ is without loss of generality any fixed arbitrary copy of $H$ in $K_n$ containing $S*$. Hence it suffices to show that \[\sum_{\ell\geq k\delta'} \frac{\mathrm{P}_{H'}(|(H' \setminus S^*) \cap (H^* \setminus S^*)| = \ell)}{p^\ell} = o_N(1).\]

Now we employ an idea \cite[Lemma 4.4]{MNSSZ23} to show this, by adjusting it to our $S^c$-graph setting. Fix some $\ell \geq \delta' k$. Notice that for a copy $H'$ to satisfy $|(H' \setminus S^*) \cap (H^* \setminus S^*)| = \ell$ it must hold $|V(H' \cap H^*)\setminus V(S^*)| \geq v_{\ell,S^*}(H)$ where  $v_{\ell,S^*}(H)$ is the minimum value of $|V(T) \setminus V(S^*)|$ among all subsets $T$ of $(H^* \setminus S^*)$ with $|T|=\ell$ edges. But to choose the random copy $H'$ of $H$ containing $S^*$,  $|V(H' \cap H^*)\setminus V(S^*)|$ is simply an Hypergeometric distribution with parameters $(v(H)-v(S^*),v(H)-v(S^*),n-v(S^*))$. Hence,
    \begin{align*}
        \mathrm{P}_{H'}(|(H' \setminus S^*) \cap (H^* \setminus S^*)|)&\leq   \binom{n-v(S^*)}{v(H)-v(S^*)}^{-1} \sum_{w= v_{\ell,S^*}(H)}^{v(H)-v(S^*)} \binom{v(H)-v(S^*)}{w}\binom{n-v(H)}{v(H) -v(S^*)-w} \\
        &\leq v(H)^{O(v(H))} \sum_{w=v_{\ell,S^*}(H)}^{v(H)-v(S^*)}\frac{(n - v(H))_{v(H) - v(S^*)-w}}{(n-v(S^*))_{v(H)-v(S^*)}} \\
        &\leq v(H)^{O(v(H))} \frac{(n - v(H))_{v(H) - v(S^*)- v_{\ell,S^*}(H)}}{(n-v(S^*))_{v(H)-v(S^*)}} \\
         &\leq v(H)^{O(v(H))} (n-v(H))^{- v_{\ell,S^*}(H)} \\
         &\leq v(H)^{O(v(H))} n^{- v_{\ell,S^*}(H)}.
    \end{align*} Notice let $T_1  \subseteq H^* \setminus S^*$ be such that $|T_1|=\ell$ and $|V(T) \setminus V(S^*)|=v_{\ell,S^*}(H)$. Hence for $T=T_1 \cup S^*$ it must hold $|T \setminus S^*|=\ell$ and $|V(T) \setminus V(S^*)|=v_{\ell,S*}(H).$ Now by assumption on $p$ since $\ell \geq \delta' k=\delta (|H|-|S|)/(1-q) \geq \delta |H|$ it must hold that $p^{|T \setminus S^*|} n^{|V(T) \setminus V(S^*)|}\geq (1+\eps)^{|T\setminus S^*|}$ and therefore \[p^{\ell} n^{v_{\ell,S^*}(H)} \geq (1+\eps)^{\ell}.\]

Combining the last two displayed equations we have for all $\ell \in [\delta' k,k],$
    \begin{align*}
        \frac{ \mathrm{P}_{H'}(|(H' \setminus S^*) \cap (H^* \setminus S^*)|)}{p{\ell}} &\leq v(H)^{O(v(H))} (1+\eps)^{-\ell},
\end{align*}hence
    \begin{align*}
        \sum_{\ell\geq k\delta'}  \frac{ \mathrm{P}_{H'}(|(H' \setminus S^*) \cap (H^* \setminus S^*)|)}{p{\ell}} &\leq \sum_{\ell\geq k\delta}  \frac{v(H)^{O(v(H))}}{(1+\eps)^\ell } \\
        &= v(H)^{O(v(H))} (1+\eps)^{-\delta' k}=o(1)
    \end{align*}
    where we have used that $\ell\geq k\delta' = \delta' (|H|-|S|)/ (1-q) \geq \delta |H| = \omega(v(H)\log v(H))$ where the last inequality is by the weakly dense assumption on $H$. This concludes the proof.
\end{proof}







\subsection{Proof of \Cref{lem:minimax}: A minimax result}\label{sec:dual_pf}
In this section, we proof \Cref{lem:minimax}, which is the last missing part of proving \Cref{thm:wek_dense}. To do so we establish the following stronger claim. 

\begin{theorem}
    \label{th:min-max-duality}
    For $q \in (0,1)$, define
    \[ \rho_{q} = \min\left\{ \max\left\{ \rho(J | S) : J \supseteq S \right\} : S \subseteq E(H), |S| \le qk \right\} \]
    and
    \[ \wt{\rho}_q = \max\left\{ \min\left\{ \rho(J|S) : S \subseteq J, |S| \le qk \right\} : J \subseteq E(H) \right\}. \]
    Then,
    \begin{enumerate}[label=(\alph*)]
        \item $\rho_q = \wt{\rho}_q$. Equivalently, $\varphi_{q} = \wt{\varphi}_{q}$.
        \item There exists an optimizer $(S^*,J^*)$ in the definition of $\rho_q$ (and $\wt{\rho}_q$) such that $|J^*| \ge qk$.
    \end{enumerate}
\end{theorem}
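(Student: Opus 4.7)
The plan is to establish both (a) and (b) by exhibiting a common saddle-point pair for the two optimization problems using the onion decomposition of $H$. By \Cref{lem:onion-universality}(b), taking $t$ to be the unique index with $|J^{(t)}| \le qk < |J^{(t+1)}|$ gives $\rho_q = \rho(J^{(t+1)} \mid J^{(t)})$, and setting $(S^*, J^*) = (J^{(t)}, J^{(t+1)})$ yields the inequality $|J^*| > qk$ immediately, establishing (b).

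For (a), I prove the two inequalities $\wt{\rho}_q \le \rho_q$ and $\wt{\rho}_q \ge \rho_q$ separately. For the upper bound, given any $J \subseteq E(H)$ I take $S = J \cap J^{(t)}$ (so that $|S| \le |J^{(t)}| \le qk$ and $S \subseteq J$). \Cref{lem:density-inequality-modularity} applied with $A = J^{(t)}$ and $B = J$ yields
\[ \rho(J^{(t)} \cup J \mid J^{(t)}) \ge \rho(J \mid J \cap J^{(t)}) = \rho(J \mid S), \]
and the left-hand side is $\le \rho_q$ by the maximality property defining the onion decomposition (since $J^{(t+1)}$ maximizes $\rho(\cdot \mid J^{(t)})$ over subgraphs of $H$ containing $J^{(t)}$). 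Taking the minimum over $S$ and then the maximum over $J$ gives $\wt{\rho}_q \le \rho_q$.

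For the lower bound, I show the saddle-point property $\min_{S \subseteq J^*, |S| \le qk} \rho(J^*|S) \ge \rho(J^*|S^*) = \rho_q$. For $S \supseteq S^*$, this follows from \Cref{lem:balanced-density-inequality} applied with $K = J^*$ and $T = S^*$ (which is balanced by the onion construction). For $S \not\supseteq S^*$, set $S' = S \cup S^*$; the previous case gives $\rho(J^*|S') \ge \rho_q$, while \Cref{lem:density-inequality-modularity} with $A = S$ and $B = S^*$ gives $\rho(S'|S) \ge \rho(S^* \mid S \cap S^*)$. The right-hand side is itself $\ge \rho_q$ by an inductive subclaim, \emph{for every $1 \le i \le t$ and every $T \subseteq J^{(i)}$, $\rho(J^{(i)}|T) \ge \rho_q$}, proved by induction on $i$ using \Cref{lem:balanced-density-inequality} and \Cref{lem:monoton_onion} in the base case and combining \Cref{lem:density-inequality-modularity} with the same mediant gluing in the inductive step. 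Writing
\[ \rho(J^*|S) = \frac{(|J^*|-|S'|)+(|S'|-|S|)}{(|V(J^*)|-|V(S')|)+(|V(S')|-|V(S)|)} \]
expresses $\rho(J^*|S)$ as the mediant of $\rho(J^*|S')$ and $\rho(S'|S)$, both of which are $\ge \rho_q$, so $\rho(J^*|S) \ge \rho_q$ as well.

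The main obstacle is the Case~B argument above: proving that $\rho(J^{(i)}|T) \ge \rho_q$ for \emph{arbitrary} $T \subseteq J^{(i)}$ (not just those containing $J^{(i-1)}$) requires a careful layer-by-layer unwinding of the onion decomposition, with the mediant inequality gluing together the density bounds across successive onion layers.
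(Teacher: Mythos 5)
Your proof is correct and relies on the same three lemmas as the paper (\Cref{lem:density-inequality-modularity}, \Cref{lem:balanced-density-inequality}, \Cref{lem:monoton_onion}), but the argument structure is genuinely different in both directions. For $\wt{\rho}_q \le \rho_q$, the paper starts from an abstract optimizer $(S^*,J^*)$ of $\wt{\rho}_q$ and, for an arbitrary admissible $S$, produces $J' = J^* \cup S$; you instead anchor directly to the onion layer $J^{(t)}$, choosing $S = J \cap J^{(t)}$ for an arbitrary $J$. Your version has the pedagogical advantage that the onion-decomposition saddle point $(J^{(t)},J^{(t+1)})$ is used uniformly in both inequalities rather than just one. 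For $\wt{\rho}_q \ge \rho_q$, the paper proves $\rho(J^{(t+1)}|S) \ge \rho_q$ for arbitrary $S \subseteq J^{(t+1)}$ in one shot, by decomposing both numerator and denominator across all $t{+}1$ onion layers simultaneously and invoking a single mediant inequality over the resulting sums; you instead do a two-case split ($S \supseteq S^*$ versus $S \not\supseteq S^*$) and a layer-by-layer induction on $i \le t$, with the mediant inequality applied only between two terms $\rho(J^{(i)}|T')$ and $\rho(T'|T)$ at each step. Your inductive subclaim (for all $T \subseteq J^{(i)}$, $\rho(J^{(i)}|T) \ge \rho_q$) is correct: the base case $i=1$ is balancedness of $J^{(1)}$ plus \Cref{lem:monoton_onion}, and the inductive step handles the $T \not\supseteq J^{(i-1)}$ case via $T' = T \cup J^{(i-1)}$, \Cref{lem:density-inequality-modularity}, and the inductive hypothesis feeding the second mediant term. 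The paper's one-shot telescoping is more compact; your induction makes the layer-by-layer control more transparent. One minor stylistic gap: in the description of your inductive subclaim you present the ``$T \supseteq J^{(i-1)}$'' case as purely a base-case tool, but it is also needed inside the inductive step to bound $\rho(J^{(i)}|T')$; spelling out the case split inside the induction would make this cleaner. The degenerate cases ($V(J^*) \setminus V(S') = \emptyset$) are benign under the paper's $\rho = +\infty$ convention and do not break the mediant argument.
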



Clearly, \Cref{th:min-max-duality}(a) immediately yields \Cref{lem:minimax}.
\begin{proof}[Proof of \Cref{th:min-max-duality}]
    It is not difficult to see that \Cref{lem:onion-universality}(b), in conjunction with \Cref{th:min-max-duality}(a), immediately yields \Cref{th:min-max-duality}(b). It remains to prove (a).

    Let us first show the easier direction, that $\wt{\rho}_q \le \rho_q$. 
    Let $(S^*,J^*)$ be an optimizer in the definition of $\wt{\rho}_q$, and $S \subseteq H$ be arbitrary. Our goal is to show that there exists $J' \supseteq S$ such that $\rho(J' | S) \ge \rho(J^* | S^*)$. Set $J' = J^* \cup S$ and $T = J^* \cap S$. Then, by \Cref{lem:density-inequality-modularity},
    \[ \rho(J' | S) \ge \rho(J^* | T) \ge \rho(J^* | S^*), \]
    where the second inequality is because $\rho(J^* | S^*) = \min_{S \subseteq J} \rho(J^* | S)$, so $\wt{\rho}_q \le \rho_q$.

    Next, suppose that $(J^{(t)},J^{(t+1)})$ is an optimizer in the definition of $\rho_q$, using \Cref{lem:onion-universality}(b). We shall show that
    \[ \wt{\rho}_q \ge \min_{S \subseteq J^{(t+1)}} \rho(J^{(t+1)} | S) \ge \rho(J^{(t+1)} | J^{(t)}) = \rho_q. \]
    The first inequality follows from the definition of $\wt{\rho}_q$ and that $|J^{(t+1)}| > q |H|$, so it remains to show the middle inequality. Let $S \subseteq J^{(t+1)}$ be arbitrary, and for each $1 \leq i \le t+1$, set $S_i = S \cap J^{(i)}$. First off, note that $\left(\rho(J^{(r+1)} | J^{(r)})\right)_{r \ge 0}$ is a non-increasing sequence in $r$ by \Cref{lem:monoton_onion}. By \Cref{lem:balanced-density-inequality}, for all $0 \leq i \leq t$, $\rho(J^{(i+1)} | S_{i+1} \cup J^{(i)}) \ge \rho(J^{(i+1)} | J^{(i)})$. As a result, for all $i \leq t,$ \[\rho(J^{(i+1)} | S_{i+1} \cup J^{(i)}) \ge \rho(J^{(t+1)} | J^{(t)}) = \rho_q.\] But by leveraging the increasing structure of the $J^{(i)}, 0 \leq i \leq t+1$, it holds
    \begin{align*}
        \rho(J^{(t+1)} | S) &=\frac{|J^{(t+1)}\setminus S|}{|V(J^{(t+1)})|-|V(S)|}\\
&= \frac{\sum_{i=0}^{t} |(J^{(i+1)}\setminus J^{(i)}) \cap (J^{(t+1)}\setminus S)| }{\sum_{i=0}^{t} |V( J^{(i+1)})|-|V(J^{(i)} \cup S_{i+1})| } \\
&= \frac{\sum_{i=0}^{t} |(J^{(i+1)}\setminus J^{(i)}) \cap (J^{(i+1)}\setminus S)| }{\sum_{i=0}^{t} |V( J^{(i+1)})|-|V(J^{(i)} \cup S_{i+1})| } \\
&\ge \frac{\sum_{i=0}^{t} | J^{(i+1)} |-| J^{(i)} \cup S_{i+1} | }{\sum_{i=1}^{t}  |V( J^{(i+1)})|-|V(J^{(i)} \cup S_{i+1})| } \ge \min_{0 \leq i \leq t} \rho(J^{(i+1)} | S_{i+1} \cup J^{(i)})  \geq \rho_q,
    \end{align*}
    as desired.
\end{proof}

\section{The Minimax rate for any monotone property: Proof of \Cref{thm:gen}}\label{sec:general}

We now prove \Cref{thm:gen}. We do this by first establishing the positive result (part (b)), and then the negative result (part (a)).

\subsection{The positive result: Proof of \Cref{thm:gen}(b)}

The proof follows by an appropriate application of a ``fractional'' first moment method. While this method has been used in the probabilistic combinatorics literature, we are not aware of any such application in the high dimensional statistics literature.

\begin{proof}
We establish the following property. For any $\delta>0$, we have that with high probability for any $A_i \in \calA$ with $A_i \subseteq Y$ it must hold $|A_i \cap A| \geq (q-\delta) k$.

Notice that this property is sufficient to conclude the result. Indeed, Baye's rule implies that for an arbitrary prior $\pi$ on $\calA$ the posterior is assign mass to each $A_i \in \calA$ given by the formula $\hat{\pi}(A_i) \propto \pi(A_i)\mathbf{1}(A_i \subseteq Y).$ In particular, the suggested property implies that with high probability the posterior outputs with high probability an $A_i$ with $|A_i \cap A| \geq (q-\delta) k.$ Therefore by Nishimori's identity (\Cref{lem:Nishimori}),
\begin{align*}
\MMSE_N(p,\pi) \leq 1- (q-\delta)+o(1).
\end{align*} Since $\delta>0$ is arbitrary we conclude the desired result.

Now to prove the desired property, first observe that since $Y=A \cup \mathcal{X}_p$,
\begin{align}
&\mathbb{P}_{A,Y} \left[ \text{$\exists$ $A_i \in \calA$ such that $A_i \subseteq Y, |A \cap A_i| \le (q-\delta)k$} \right] \nonumber \\
=&\E_{A} \sum_{S \subseteq A, |S| \leq (q-\delta)k} \mathbb{P}_{Y} \left[ \text{$\exists$ $A_i \in \calA$ such that $A_i \subseteq Y, A \cap A_i=S$} \right] \nonumber \\
=&\E_{A} \sum_{S \subseteq A, |S| \leq (q-\delta)k} \mathbb{P}_{\mathcal{X}_p} \left[ \text{$\exists$ $A_i \in \calA$ such that $A_i \setminus S \subseteq \mathcal{X}_p, A_i \subseteq S$} \right]. \label{eq:union_1}
\end{align}
Now for any $S$ we can choose $w_S \in W_S$ with $\sum_{T} w_S(T) (\psi_q)^{|T\setminus S|} \leq 1/2$. Notice that since by definition of $w_S$ we have $\min_{A_i \in \calA, S \subseteq A_i } \sum_{T \subseteq A_i} w_S(T) \geq 1,$ it also holds
\[ \bone\left[ \text{$\exists$ $A_i \in \calA$ such that $A_i \setminus S \subseteq \mathcal{X}_p$} \right] \leq \sum_{T} w_S(T) 1(T \setminus S \subseteq \mathcal{X}_p)\] and therefore 

\[\E_{A} \sum_{S \subseteq A, |S| \leq (q-\delta)k} \mathbb{P}_{\mathcal{X}_p} \left[ \text{$\exists$ $A_i \in \calA$ such that $A_i \setminus S \subseteq \mathcal{X}_p$} \right] \leq \E_{A} \sum_{S \subseteq A, |S| \leq (q-\delta)k} \sum_{T} w_S(T) p^{|T\setminus S|}.\]Combining with \eqref{eq:union_1} gives, 
\[\mathbb{P}_{A,Y} \left[ \text{$\exists$ $A_i \in \calA$ such that $A_i \subseteq Y, |A \cap A_i| \le qk$} \right] \leq \E_{A} \sum_{S \subseteq A, |S| \leq (q-\delta)k} \sum_{T} w_S(T) p^{|T\setminus S|}.\]

Since $w$ is only supported on $T$ containing $S$, for all $T$ in the inside summation we have $|T \setminus S|=|T|-|S| \geq \delta k$, and therefore using also the $p<\psi_q$ and the definition of $w_S$,
\[\mathbb{P}_{A,Y} \left[ \text{$\exists$ $A_i \in \calA$ such that $A_i \subseteq Y, |A \cap A_i| \le qk$} \right] \leq 2^{qk} (p/\psi_q)^{\delta k} \sum_{T} w_S(T) \psi_q^{|T\setminus S|} \leq  2^{qk} (p/\psi_q)^{\delta k}.\] Since $p=o(\psi_q)$ we conclude the result.
\end{proof}

\subsection{The negative result: Proof of \Cref{thm:gen}(a)}
We now prove the negative result of the theorem. The proof follows by an interesting use of a duality trick of Talagrand between fractional covers and spread measures.

\begin{proof}
    By definition, there exists an $S \subseteq \mathcal{X}, |S| \leq qk$ such that \[\max_{w \in W_{S}} \sum_{T \subseteq \mathcal{X} } w(T)\psi_q^{|T \setminus S|}  \leq 1/2.\] Using the strong duality of linear programs, we conclude that there exists some function $\pi'$ from the set $\calA(S) \defeq \left\{ A_i \in \calA : S \subseteq A_i \right\}$ to $[0,1]$ such that
    \begin{itemize}
        \item[(a)] $\displaystyle\sum_{A_i \in \calA(S)} \pi'(A_i) = 1/2$.
        \item[(b)] For all $T$ such that $S \subseteq T$ and $|T| \geq qk$,
        \[\sum_{A_i \in \calA(S), T \subseteq A_i} \pi'(A_i) \leq (\psi_q)^{|T\setminus S|}.\]
        
        \end{itemize}Hence, setting $\pi(A_i)=2\pi'(A_i)$ for all $A_i \in \calA(S)$ we conclude the existence of a measure on $\calA(S)$ such that for all $T$ such that $S \subseteq T$ and $|T| \geq qk$ it holds
 \begin{align}\label{eq:spread}
\pi_{X \sim \pi} (T \subseteq X) \leq 2(\psi_q)^{|T\setminus S|}.
\end{align}
\begin{remark}
In the language of probabilistic combinatorics, if \eqref{eq:spread} was true for all $T$, the measure would be called $\psi_q$-spread. Yet, in our setting we only know that the ``spread" condition holds for a subset of $T$'s. Interesting, it is this spread-type of condition that will allow us to use $\pi$ as a ``non-favorable enough" prior to conclude our tight minimax rate lower bound.
\end{remark}

    We use the ``spread" $\pi$ as our prior and prove that $\liminf_{N\to\infty} \mathrm{MMSE}_N(p,\pi) \geq 1-q$ which concludes the proof. By the Nishimori identity (\Cref{lem:Nishimori}), it suffices to prove that for any $1-q>\delta>0$ it holds $\mathbb{P}(|A' \cap A| \geq (q+\delta)k)=o(1)$ where $A \sim \pi$ and $A'$ is sampled from the posterior of $A$ given $Y=A \cup X_p$.

   For any $A \in \calA(S)$, $Y \in 2^{\mathcal{X}}$ and $t \in [q+\delta,1]$, let \begin{equation}\label{e:Z.Y_2}
	Z_{Y}(A,t)
	\defeq\sum_{A_i \in \calA(S)} \pi(A_i) \mathbf{1}\{A_i \subseteq Y, |A\cap A_i| \geq  tk\}.
	\end{equation}Also denote for simplicity $Z_Y\defeq Z_Y(A,0).$ A quick application of Baye's rule gives that the posterior distribution is $\mathbb{P}(A|Y)=\pi(A)  \mathbf{1}\{A \subseteq Y\}/Z_Y, A \in \calA(S),$ and therefore we have the identity \[\mathbb{P}(|A' \cap A| \geq (q+\delta)k)=\E_{A,Y} Z_{Y}(A,q+\delta)/Z_Y.\]
Now we claim that if we establish $p^{|S|-k} \E_{A,Y} Z_{Y}(A,q+\delta)=o(1),$ then we can conclude $\E_{A,Y} Z_{Y}(A,q+\delta)/Z_Y=o(1)$. 

This implication follows by a careful ``change of measure" argument. To see this denote by $\mathbb{Q}_p$ the product law of $Y=X_p \setminus S,$ and by $\mathbb{P}_p$ the ``planted'' law of $Y=(A \setminus S) \cup (X_p \setminus S)$ for independent $A \sim \pi, X_p \sim \mathbb{Q}_p.$ Also recall that $\pi$ is always supported on sets containing $S$. Combining these, it holds for all $Y \subseteq \mathcal{X}$ that
\[ \mathbb{P}_p(Y \setminus S|A)=p^{|Y|-k} \mathbf{1}(A \subseteq Y)=p^{|S|-k} \mathbf{1}(A \subseteq Y) \mathbb{Q}_p(Y \setminus S),\] and therefore the likelihood ratio between $\mathbb{P}_p$ and $\mathbb{Q}_p$ take the following form for all $Y \subseteq \mathcal{X}$,
\[\frac{\mathbb{P}_p(Y \setminus S)}{\mathbb{Q}_p(Y \setminus S)}=Z_Y p^{|S|-k}. \] In particular, for all $\epsilon>0,$ \[\mathbb{P}_p(Z_Y \leq \epsilon p^{k-|S|})=\E_{\mathbb{Q}_p} [\frac{ \mathbb{P}_p(Y \setminus S)}{\mathbb{Q}_p(Y \setminus S)} \mathbf{1}(Z_Y p^{|S|-k} \leq \epsilon )]= \E_{\mathbb{Q}_p} [ Z_Y p^{|S|-k} \mathbf{1}(Z_Y p^{|S|-k} \leq \epsilon )] \leq \epsilon. \] Hence, for any $\epsilon>0,$ since almost surely
$Z_{Y}(A,q+\delta)/Z_Y \leq 1$, we have

\[\E_{A,Y} Z_{Y}(A,q+\delta)/Z_Y \leq \frac{1}{\epsilon} p^{|S|-k} \E_{A,Y} Z_{Y}(A,q+\delta)+ \epsilon.\]  From the last inequality by optimizing over $\epsilon>0$ we conclude \[\E_{A,Y} Z_{Y}(A,q+\delta)/Z_Y \leq 2\sqrt{p^{|S|-k} \E_{A,Y} Z_{Y}(A,q+\delta)},\] from which the claimed implication follows.

Now by linearity of expectation and \eqref{e:Z.Y_2} (for the first equality), \eqref{eq:spread} (for the second inequality) and direct counting arguments, \begin{align*} 
p^{|S|-k}  \E_{A,Y} Z_{Y}(A,q+\delta)& = \sum_{\ell \geq (q+\delta) k} \pi^{\otimes 2}(|A \cap A'|=\ell)p^{|S|-\ell}\\
& \leq \sum_{\ell \geq (q+\delta) k} \E_A \sum_{T \subseteq A, |T|=\ell} \pi(T \subseteq A')p^{|S|-\ell}\\
& \leq 2\sum_{\ell \geq (q+\delta) k} \binom{k-|S|}{\ell-|S|} (\psi_q/p)^{\ell-|S|}\\
& \leq 2\sum_{\ell \geq (q+\delta) k} (ke/(\ell-|S|)^{\ell-|S|} (\psi_q/p)^{\ell-|S|}\\
& \leq 2\sum_{\ell \geq (q+\delta) k} (e/\delta)^{\ell-|S|} (\psi_q/p)^{\ell-|S|}\\
& =O \left(((\psi_q e)/(\delta p))^{\delta k}\right).
\end{align*}Since $p=\omega(\psi_q)$ the last quantity is $o(1)$ and the proof is complete.
 \end{proof}

\section{Omitted proofs}\label{sec:ommit}
\subsection{Proof of \Cref{cor:wek-dense-improved}}
\begin{proof}[Proof of \Cref{cor:wek-dense-improved}]
    Consider a $t=0,1,\ldots,M-1$ for which \eqref{eq:jump} holds and any $p$ satisfying \eqref{eq:interval}. Now by \Cref{lem:onion-universality} for any sufficiently small $\epsilon=\epsilon_n>0$ if $q_1=|J^{(t)}|/|H|+\epsilon, q_2=|J^{(t)}|/|H|-\epsilon$ it holds \[\varphi_{q_1}=n^{-\frac{|V(J^{(t+1)}) \setminus V(J^{(t)})|}{|J^{(t+1)} \setminus J^{(t)}|}}\] and \[\varphi_{q_2}=n^{-\frac{|V(J^{(t)}) \setminus V(J^{(t-1)})|}{|J^{(t)} \setminus J^{(t-1)}|}}.\] Hence $p$ must satisfy \[(1+\delta)\varphi_{q_1} \leq p \leq (1-\delta)\varphi_{q_2}.\]By \Cref{thm:wek_dense} it therefore holds
    \[1-|J^{(t)}|/|H|-\epsilon \leq \liminf_n \MMSE_n(p) \leq \limsup_n \MMSE_n(p) \leq 1-|J^{(t)}|/|H|-\epsilon.\]As $\epsilon=\epsilon_n>0$ can be made arbitrarily small, and in particular $o(1),$ the first part follows.

    The second part follows by a similar argument noticing that for any sufficiently small $\epsilon_n=\epsilon_n>0,$ it holds
 \[n^{-\frac{|V(J^{(M)} \setminus V(J^{(M-1)})|}{|J^{(M)} \setminus J^{(M-1)}|}}=\varphi_{1-\epsilon}.\]
\end{proof}

\subsection{Proof of \Cref{cor:limit-point-inversion}}

\label{sec:limit-point-inversion}

We begin with a proof of \Cref{cor:limit-point-inversion}, which inverts \Cref{thm:main} to calculate the MMSE for a specified value of $p$.

\limitpoint*

\begin{proof}
    Consider $y \in [0,1]$ to be the limit point of the subsequence of $\alpha_{k_n}, n \in \NN.$ Observe that $G$ is a non-increasing right-continuous function. Hence for any sufficiently small constant $\eps>0$, for all large $n$, $G(\alpha_n) \leq p \leq G((1-\eps)\alpha_n)$. Now, if $y \not \in \{0,1\},$ then we can conclude by the fact that $G$ is decreasing that for all $\eps>0$ it holds \[\limsup_{n} p_{k_n}/(G(y-\eps))_{k_n} \leq 1 \leq \liminf_n p_{k_n}/(G(y+\eps))_{k_n}.\] Using \Cref{thm:wek_dense} (which trivially generalizes over subsequences) this allows us to conclude \begin{align*}
       1-y-\eps \leq \liminf_n \MMSE_{k_n}(p) \leq  \limsup_n \MMSE_{k_n}(p) \leq 1-y+\eps.
    \end{align*}
    Since $\eps>0$ is arbitrary, \eqref{eq:mmse_fin} follows for $y \in (0,1)$. If $y=0$, then we know for each $\eps>0,$ it must hold $\liminf_n p_{k_n}/(G(\epsilon))_{k_n} \geq 1.$ Therefore from \Cref{thm:wek_dense}, \begin{align*}
       1-\eps \leq \liminf_n \MMSE_{k_n}(p).
    \end{align*}Since $\eps>0$ is arbitrary, \eqref{eq:mmse_fin} follows. The case $y=1$ follows similarly to the case $y=0$.
    
\end{proof}

\subsection{Proofs of lemmas for the $\rho$ function}

In this section, we prove some properties of the $\rho$ function, restated for convenience.

\densitymodularity*

\begin{proof}
    We have 
    \begin{align*}
        \rho(A\cup B|A) &= \frac{|(A\cup B) \setminus A|}{v(A\cup B) - v(A)} \\
        &= \frac{|B\setminus (A\cap B)|}{v(B) - |V(A)\cap V(B)|},    \end{align*}
    so the lemma is a consequence of the simple inequality \[|V(A)\cap V(B)| \geq |V(A \cap B)|=v(A\cap B). \qedhere \]
\end{proof}

\lembalance*

\begin{proof}
    For any $T'$  such that $T \subseteq T' \subseteq K$ we have
    \begin{align*}
        \rho(K|T') = \frac{|K \setminus T' |}{v(K)-v(T')} = \frac{(|K|-|T|) - (|T'|-|T|)}{(v(K)-V(T)) - (v(T')-v(T))} \ge \frac{|K \setminus T |}{v(K)-v(T)} =\rho(K|T),
    \end{align*}
    where the final inequality is equivalently with $\rho(T'|T) \le \rho(K|T)$.
\end{proof}

\maximizerunion*

\begin{proof}
Let $Q'=J'\setminus A,Q''=J''\setminus A.$
    Then we have 
    \begin{align}\label{eq:union}
        \rho(J'\cup J''|A) &= \frac{|Q'| + |Q''| - |Q\cap Q''|}{|V(Q')| + |V(Q'')| - |V(Q')\cap V(Q'')|} \\
        &\geq \frac{|Q'| + |Q''| - |Q\cap Q''|}{v(Q') + v(Q'') - v(Q'\cap Q'')}.
    \end{align}
By the balancedness of one of $J',J''$,
\[ 
\rho(J' \cap J''|A)=\frac{|Q' \cap Q''|}{v(Q' \cap Q'')} \leq \max\left\{\frac{|Q'| }{v(Q')}, \frac{|Q''| }{v(Q'')} \right\}.
\]
Combining the two last displayed equations with \eqref{eq:union} we conclude 
\[
\rho(J'\cup J''|A) \geq \min\left\{\frac{|Q'|}{v(Q') }, \frac{|Q''|}{v(Q'') }\right\}=\min\{\rho(J'|A), \rho(J''|A)\},
\]
as desired.
\end{proof}

\union*

\begin{proof}
For all $C \supseteq B,$ we have \[\rho(B|A)=\frac{|B|-|A|}{|V(B)|-|V(A)|} \geq \rho(C|A) =\frac{|C|-|A|}{|V(C)|-|V(A)|} \]and therefore \[\frac{|B|-|A|}{|V(B)|-|V(A)|} \geq \frac{|C|-|B|+(|B|-|A|)}{(|V(C)|-|V(B)|)+(|V(B)|-|V(A)|)},\]which rearranging gives \[\rho(C|B)=\frac{|C|-|B|}{|V(C)|-|V(B)|} \leq \rho(B|A) =\frac{|B|-|A|}{|V(B)|-|V(A)|} \] 

Now, consider $J^{(t)}, t=0,1,\ldots,M$ the onion decomposition of a graph $H$. Then for any $t=0,1,\ldots,M-1,$ applying the established inequality for $A\defeq J^{(t-1)}$, which gives by definition $B\defeq J^{(t)},$ and $C\defeq J^{(t+1)} \supseteq J^{(t)},$ implies \[\rho(J^{(t+1)}|J^{(t)}) \leq \rho(J^{(t)}|J^{(t-1)}). \qedhere \]
\end{proof}

\parhead{Acknowledgements.} We would like to thank Ewan Davies and Will Perkins for organizing the 2024 Georgia Tech Summer School on Probability, Algorithms, and Inference, during which this project started. 

\newpage

\bibliographystyle{alpha}
\bibliography{refs}

\newpage

\appendix
\section{Auxiliary Lemmas}
In this section we prove some standard results that are needed for the main body of this work.

We start with the Nishimori identity we use often in this work.
\begin{lemma}[Nishimori's identity]\label{lem:Nishimori}
    Suppose a Bayesian inference setting where the parameter $\theta$ is sampled from some prior $\pi$ supported on $S^{N-1}$ , i.e., $\theta \sim \pi,$ and we have observations $Y \sim P_{\theta}$. Let also $\pi'(Y)$ be the posterior distribution of $\theta$ given $Y$. Then, 
    \[\MMSE(\theta):=\E[\|\theta-\E[\theta|Y]\|^2_2]=1-\E_{\theta \sim \pi, Y \sim P_{\theta}, \theta' \sim \pi'(Y)}[\langle \theta,\theta'\rangle].\] In particular, for the planted subgraph model for a graph $H$ it holds
    \[\MMSE_n(p)=1-\frac{1}{|H|}\E[|H^* \cap H'|],\]where $H'$ is a uniform random copy of $H$ in the observed graph $G.$
\end{lemma}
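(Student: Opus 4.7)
The plan is to prove the general statement by a direct expansion of the squared $L^2$ norm and then specialize to the planted subgraph setting via a normalization argument. First, I would write
\[ \MMSE(\theta) = \E\lsrs{\norm{\theta}_2^2} - 2\E\lsrs{\ip{\theta}{\E[\theta|Y]}} + \E\lsrs{\norm{\E[\theta|Y]}_2^2}. \]
Since $\theta \in S^{N-1}$, the first term equals $1$. For the cross-term, the tower property of conditional expectation gives
\[ \E\lsrs{\ip{\theta}{\E[\theta|Y]}} = \E\lsrs{\ip{\E[\theta|Y]}{\E[\theta|Y]}} = \E\lsrs{\norm{\E[\theta|Y]}_2^2}, \]
so the three terms collapse to $\MMSE(\theta) = 1 - \E[\norm{\E[\theta|Y]}_2^2]$.

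The next step is the Nishimori trick: expressing $\E[\norm{\E[\theta|Y]}_2^2]$ as a posterior overlap. Concretely, if $\theta'$ is drawn from the posterior $\pi'(Y)$ independently of $\theta$ conditionally on $Y$, then
\[ \E_{Y}\lsrs{\ip{\E[\theta|Y]}{\E[\theta'|Y]}} = \E_{Y}\lsrs{\E[\ip{\theta}{\theta'}|Y]} = \E_{\theta \sim \pi,\, Y \sim P_\theta,\, \theta' \sim \pi'(Y)}[\ip{\theta}{\theta'}]. \]
Combining the two displays yields the general identity
\[ \MMSE(\theta) = 1 - \E_{\theta \sim \pi,\, Y \sim P_\theta,\, \theta' \sim \pi'(Y)}[\ip{\theta}{\theta'}]. \]

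Finally, to specialize to the planted subgraph model, I would set $\theta \defeq \mathbf{1}_{H^*}/\sqrt{|H|}$, which lies on $S^{N-1}$ with $N = \binom{n}{2}$ since $\norm{\mathbf{1}_{H^*}}_2^2 = |H|$. The normalized MMSE from \eqref{eq:def-mmse} then equals $\E[\norm{\theta - \E[\theta|G]}_2^2]$, so the general identity applied to this $\theta$ gives $\MMSE_n(p) = 1 - \frac{1}{|H|}\E[\ip{\mathbf{1}_{H^*}}{\mathbf{1}_{H'}}] = 1 - \frac{1}{|H|}\E[|H^* \cap H'|]$, using that $\ip{\mathbf{1}_{H^*}}{\mathbf{1}_{H'}} = |H^* \cap H'|$ and that (as noted in the paper) the posterior of the planted subgraph model is the uniform distribution over copies of $H$ in $G$, so $H'$ in the statement is indeed a draw from $\pi'(G)$.

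There is no real obstacle here; the entire argument is a short manipulation of conditional expectations and the only subtlety is keeping track of the normalization by $\sqrt{|H|}$ that places $\mathbf{1}_{H^*}$ on the unit sphere so that the general form of the identity becomes directly applicable.
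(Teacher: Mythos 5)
Your proof is correct and is precisely the standard derivation of the Nishimori identity: expand the square, collapse the cross term by the tower property, replace the squared posterior mean by a replica overlap using conditional independence and the equality of conditional laws of $\theta$ and $\theta'$, then specialize via the $\sqrt{|H|}$ rescaling so that $\mathbf{1}_{H^*}$ sits on the unit sphere. The paper does not spell this out; it simply cites \cite[Lemma 2]{niles2023all} for the general identity and invokes the uniform posterior for the specialization, so your argument is the same proof made self-contained. The only small elision is in the replica step, where you should note explicitly that $\E[\theta'\,|\,Y]=\E[\theta\,|\,Y]$ (since $\theta$ and $\theta'$ have the same conditional law given $Y$), which is what turns $\E_Y\bigl[\ip{\E[\theta|Y]}{\E[\theta'|Y]}\bigr]$ into $\E_Y\bigl[\norm{\E[\theta|Y]}_2^2\bigr]$ and closes the chain of equalities.
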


\begin{proof}
    See \cite[Lemma 2]{niles2023all} for the proof of the identity. The application for the normalized MMSE of the planted subgraph model follows immediately from the fact that the posterior in the planted subgraph model is the uniform measure on the copies of $H$ in the observed graph $G$.
\end{proof}

The following is a standard lemma on the MMSE curve.
\begin{lemma}\label{lem:MMSE_poly}
    For any graph $H=H_n$ and for any $n, p=p_n \in (0,1)$, under the planted subgraph model corresponding to $H$ the function $\MMSE_n(p)$ is a non-decreasing polynomial function of $p$.
\end{lemma}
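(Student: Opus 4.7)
The plan is to establish the two claims of the lemma separately: that the function $p \mapsto \MMSE_n(p)$ is polynomial, and that it is non-decreasing in $p$.

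For the \emph{polynomiality} claim, I will exploit the simple but crucial observation (used throughout the paper) that the posterior distribution of $H^*$ given the observed graph $G$ does not depend on the noise level $p$. Indeed, by Bayes' rule the posterior is uniform on the set of copies of $H$ contained in $G$, so the map $G \mapsto \mathbb{E}[\mathbf{1}_{H^*} \mid G]$ is a $p$-free function of $G$ alone. Consequently the per-instance loss $g(H_0, G') := \tfrac{1}{|H|} \|\mathbf{1}_{H_0} - \mathbb{E}[\mathbf{1}_{H^*} \mid G = G']\|_2^2$ does not depend on $p$. Expanding the MMSE as a finite sum over pairs $(H_0, G')$, where $H_0$ is a copy of $H$ in $K_n$ and $G'$ is a graph on $[n]$ containing $H_0$, I will write
\[
\MMSE_n(p) \;=\; \sum_{H_0,\, G' \supseteq H_0} \frac{1}{N_H}\, p^{|G' \setminus H_0|}\, (1-p)^{\binom{n}{2} - |G'|} \cdot g(H_0, G'),
\]
where $N_H$ is the number of copies of $H$ in $K_n$ (the density factor comes from the independence of $H^*$ and the $G(n,p)$ noise $G_0$, using that $G = H^* \cup G_0$). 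Each such joint probability is a polynomial in $p$ and $g$ is $p$-free, so $\MMSE_n(p)$ is a finite sum of polynomials in $p$ and therefore itself a polynomial.

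For \emph{monotonicity}, I will use an edge-addition coupling together with the data-processing inequality for MMSE. Fix $0 \le p < p' \le 1$, set $q := (p'-p)/(1-p) \in [0,1)$, and sample $H^*$ jointly with two independent noise graphs $G_0 \sim G(n, p)$ and $G_1 \sim G(n, q)$. A per-edge computation shows that $G_0' := G_0 \cup G_1$ has the distribution $G(n, p')$ and is independent of $H^*$, so setting $G := H^* \cup G_0$ and $G' := H^* \cup G_0' = G \cup G_1$ realizes the planted models at noise levels $p$ and $p'$ on the same probability space with $G \subseteq G'$. Since $G_1$ is independent of $H^*$, any estimator $\hat{A}(G')$ can be simulated from $G$ by internally drawing $G_1$ and returning $\hat{A}(G \cup G_1)$, yielding an estimator of $H^*$ from $G$ with exactly the same mean squared error. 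Taking the infimum over estimators gives $\MMSE_n(p) \le \MMSE_n(p')$.

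Neither step poses a serious obstacle: polynomiality reduces to the clean observation that all $p$-dependence sits in the joint density of $(H^*, G)$, while monotonicity reduces to a standard coupling. If I had to flag a step that deserves the most care, it is verifying the coupling for monotonicity, namely that $G_0 \cup G_1$ is indeed distributed as $G(n, p')$ and is independent of $H^*$ — a one-line edgewise check, but the point on which the whole argument hinges.
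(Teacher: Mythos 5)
Your proof is correct, and for the polynomiality claim it follows a genuinely different route than the paper. The paper starts from the Nishimori identity $\MMSE_n(p) = 1 - \frac{1}{|H|}\E[|H' \cap H^*|]$, with $H'$ a draw from the posterior, and then expands this into a polynomial formula in $p$ indexed by the overlap $|H'' \cap H^*|$ between two independent uniformly random copies of $H$ in $K_n$. You instead isolate the single structural fact driving everything---that the posterior mean $\E[\mathbf{1}_{H^*}\mid G]$ depends only on $G$ and not on $p$---so the per-instance loss $g(H_0,G')$ is $p$-free, and $\MMSE_n(p)$ becomes a finite sum of these losses weighted by the joint probabilities $\frac{1}{N_H}\,p^{|G'\setminus H_0|}(1-p)^{\binom{n}{2}-|G'|}$, which are manifestly polynomial in $p$. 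Your route sidesteps any need to track how the posterior normalization $Z(G)$ (the number of copies of $H$ in $G$) interacts with the $p$-dependence when one expands the Nishimori expectation, and is arguably the more transparent argument. For monotonicity, both you and the paper are ultimately invoking the data-processing inequality for MMSE; the paper cites an external reference for it, whereas you make the argument self-contained by spelling out the edgewise coupling $G(n,p)\cup G(n,q)\stackrel{d}{=}G(n,p')$ with $q=(p'-p)/(1-p)$, independent of $H^*$, and simulating the higher-noise observation from the lower-noise one. Same underlying principle, made explicit.
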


\begin{proof}
   By the Nishimori identity, \[\MMSE_n(p)=1-\frac{1}{|H|}\E[|H' \cap H^*|],\]where $H^*$ is the planted copy of $H$ in $G$ and $H'$ is a uniform random copy of $H$ in the observed $G$. Expanding a direct argument gives,
   \begin{align}\label{eq:MMSE_formula}
   \MMSE_n(p)=1-\frac{M_H}{|H|}\sum_{\ell} \ell  \P(|H" \cap H^*|=\ell) p^{|H|-\ell},
   \end{align}where now $M_H$ is the total number of copies of $H$ in $K_n,$ and $H^*, H''$ are now independent samples from the uniform distribution over \emph{all} copies of $H$ in $K_n.$ Notice that clearly \eqref{eq:MMSE_formula} implies that $\MMSE_n(p)$ is a polynomial function in $p$.

   The fact that $\MMSE_n(p)$ is non-decreasing in $p$ follows by a simple application of the data processing inequality, see e.g. \cite[Lemma 3.2]{MNSSZ23}.
\end{proof}

We now present a simple identity on the first moment threshold of $H$.
\begin{lemma}\label{lem:1mm_wek}
    Suppose $H$ is a weakly dense graph. Then it's first moment threshold $p_{\mathrm{1M}}(H)$ satisfies 
    \[p_{\mathrm{1M}}(H)=(1+o(1))n^{-v(H)/|H|}.\]
\end{lemma}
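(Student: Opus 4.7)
The plan is to compute the expected number of copies of $H$ in $G(n,p)$ exactly, set it equal to the defining constant (say $1/2$) of $p_{\mathrm{1M}}(H)$, solve for $p$, and then use the weakly dense hypothesis to absorb the prefactors into a $1+o(1)$ multiplicative error.

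First, I would write the expected number of labeled-up-to-automorphism copies of $H$ in $G(n,p)$ as
\[
\mu_H(n,p) \;=\; \frac{(n)_{v(H)}}{|\mathrm{Aut}(H)|}\, p^{|H|},
\]
where $(n)_k = n(n-1)\cdots(n-k+1)$. By definition, $p_{\mathrm{1M}}(H)$ is the unique $p \in (0,1)$ with $\mu_H(n,p) = c$ for the chosen constant $c$ (e.g.\ $c=1/2$); solving explicitly gives
\[
p_{\mathrm{1M}}(H) \;=\; \left(\frac{c\,|\mathrm{Aut}(H)|}{(n)_{v(H)}}\right)^{1/|H|}.
\]

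Next, I would separate this expression into the ``main term'' $n^{-v(H)/|H|}$ and three multiplicative correction factors: $c^{1/|H|}$, $\bigl((n)_{v(H)}/n^{v(H)}\bigr)^{-1/|H|}$, and $|\mathrm{Aut}(H)|^{1/|H|}$. The first is trivially $1+o(1)$ since $|H|\to\infty$ (under weak density, $|H|=\omega(v(H)\log v(H))\to\infty$). For the second, one has $(n)_{v(H)}/n^{v(H)} \ge (1 - v(H)/n)^{v(H)} \ge 1 - v(H)^2/n$, and since $v(H) \le n$, taking the $1/|H|$ power yields $1+o(1)$. For the third, use the crude bound $|\mathrm{Aut}(H)| \le v(H)!\le v(H)^{v(H)}$, giving
\[
|\mathrm{Aut}(H)|^{1/|H|} \;\le\; \exp\!\left(\frac{v(H)\log v(H)}{|H|}\right).
\]
Here the weakly dense assumption $\liminf_n |H|/(v(H)\log v(H)) = +\infty$ is exactly what is needed: the exponent is $o(1)$, so this factor is also $1+o(1)$.

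Combining the three correction factors, each of which is $1+o(1)$, gives $p_{\mathrm{1M}}(H) = (1+o(1))\,n^{-v(H)/|H|}$ as claimed. The main (and essentially only) obstacle is noticing that the weakly dense hypothesis is precisely calibrated to kill the $|\mathrm{Aut}(H)|^{1/|H|}$ factor; aside from that, the proof is a routine first-moment calculation.
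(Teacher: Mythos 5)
Your approach is the natural one and correctly locates weak density as the hypothesis that kills the $|\mathrm{Aut}(H)|^{1/|H|}$ factor; for what it is worth, the paper does not prove this lemma in-line but defers to \cite[Lemma 4.2]{MNSSZ23}, which takes the same first-moment route. However, there is a genuine gap in your treatment of the falling-factorial correction $\bigl((n)_{v(H)}/n^{v(H)}\bigr)^{-1/|H|}$. You invoke the chain $(n)_{v(H)}/n^{v(H)} \ge (1 - v(H)/n)^{v(H)} \ge 1 - v(H)^2/n$, but the last quantity is negative (hence the lower bound is vacuous, and a fractional power of it is undefined) as soon as $v(H) > \sqrt{n}$, a regime that weak density certainly permits (e.g.\ $v(H) = \Theta(n)$, $|H| = \Theta(n^2)$). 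So as written this step does not go through.

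The repair is simple and should be made explicit. Use $(n)_{v(H)} \ge (n - v(H)+1)^{v(H)}$ to get
\[
\log \frac{n^{v(H)}}{(n)_{v(H)}} \;\le\; v(H)\,\log\frac{n}{\,n - v(H)+1\,},
\]
and split into cases. If $v(H) \le n/2$, the logarithm on the right is at most $\log 2$, so the whole expression is $O(v(H)) = o(|H|)$ by weak density (note weak density forces $v(H)\to\infty$, hence $\log v(H)\to\infty$). If $v(H) > n/2$, bound the logarithm crudely by $\log n \le 2\log v(H)$, so the expression is $O(v(H)\log v(H)) = o(|H|)$, again by weak density. Either way $\bigl(n^{v(H)}/(n)_{v(H)}\bigr)^{1/|H|} = \exp(o(1)) = 1 + o(1)$. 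With that patch your argument is complete; the remaining bookkeeping, and in particular the observation that weak density is exactly calibrated to absorb $|\mathrm{Aut}(H)|^{1/|H|} \le \exp\bigl(v(H)\log v(H)/|H|\bigr) = 1+o(1)$, is correct.
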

We refer the reader to \cite[Lemma 4.2]{MNSSZ23} for the proof.

Finally, we show that, in the context of monotone properties given by graph inclusions, the minimax prior is given by the uniform distribution.
\begin{lemma}\label{lem:unif-is-worst-case}
    Let $\calA \subseteq \{0,1\}^{\binom{n}{2}}$ be a subgraph inclusion property, that is, there exists a graph $H \subseteq \binom{[n]}{2}$ such that $G \in \calA$ if and only if $G$ contains an isomorphic copy of $H$.\footnote{Isomorphic here is in the sense of \emph{graph} isomorphism, i.e., relabeling of the vertices.} Then, the maximizer $\pi^*$ of $\pi\mapsto \MMSE_N(p, \pi)$ in \eqref{eq:minimax} is given by the uniform distribution over all copies of $H$ in $K_n = \binom{[n]}{2}.$
\end{lemma}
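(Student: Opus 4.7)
The plan is to exploit the vertex-permutation symmetry of the problem, a standard device in minimax statistics. Let $G = S_n$ act on $\binom{[n]}{2}$, and hence on $\{0,1\}^{\binom{n}{2}}$, in the natural way. By the hypothesis that $\calA$ is a subgraph-inclusion property, $\calA$ is closed under $G$ and forms a \emph{single} $G$-orbit, since any two isomorphic copies of $H$ are related by a vertex relabeling. Moreover, the product-Bernoulli noise $X_p$ is $G$-invariant. The strategy is to symmetrize any candidate estimator to produce one whose point risk is constant on $\calA$, and then to deduce that the uniform prior is least favorable.

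More concretely, for any (randomized) estimator $\hat A$ and any $\sigma \in G$, I will define $\hat A^\sigma(Y) \defeq \sigma \cdot \hat A(\sigma^{-1} Y)$ and set $\hat A^{\mathrm{sym}} \defeq \frac{1}{|G|} \sum_{\sigma \in G} \hat A^\sigma$. Writing $R(\hat A, A) \defeq \E_{Y = A \cup X_p} \| \mathbf{1}_A - \hat A(Y) \|_2^2$ and $r(\hat A, \pi) \defeq \E_{A \sim \pi} R(\hat A, A)$, a short change-of-variables calculation (using that $Y \sim P_A$ implies $\sigma \cdot Y \sim P_{\sigma A}$ by invariance of the Bernoulli noise, together with $\mathbf{1}_{\sigma A} = \sigma \cdot \mathbf{1}_A$) yields the covariance identity $R(\hat A^\sigma, A) = R(\hat A, \sigma^{-1} A)$. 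Because $\calA$ is a single $G$-orbit and $\hat A^{\mathrm{sym}}$ is $G$-equivariant, $R(\hat A^{\mathrm{sym}}, A)$ must then equal some constant $c = c(\hat A)$ for every $A \in \calA$. Convexity of the squared loss together with the $G$-invariance of $\pi_U$, the uniform distribution on $\calA$, gives $r(\hat A^{\mathrm{sym}}, \pi_U) \le \frac{1}{|G|} \sum_{\sigma} r(\hat A^\sigma, \pi_U) = r(\hat A, \pi_U)$, so symmetrization cannot increase the uniform Bayes risk.

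Applying this to a Bayes-optimal estimator $\hat A^*$ for $\pi_U$, its symmetrization $\hat B \defeq (\hat A^*)^{\mathrm{sym}}$ is still Bayes-optimal for $\pi_U$ and has constant per-point risk $c = \MMSE_N(p, \pi_U)$. The punchline is that a constant-risk estimator achieves the same Bayes risk under \emph{every} prior on $\calA$: for any $\pi$, $r(\hat B, \pi) = \E_{A \sim \pi} c = c$. Hence for every $\pi$,
\[
\MMSE_N(p, \pi) = \min_{\hat C} r(\hat C, \pi) \;\le\; r(\hat B, \pi) \;=\; c \;=\; \MMSE_N(p, \pi_U),
\]
so $\pi_U$ attains the maximum in \eqref{eq:minimax}. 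The only non-routine point is the covariance identity $R(\hat A^\sigma, A) = R(\hat A, \sigma^{-1}A)$, which is also the sole place where the subgraph-inclusion hypothesis is used (to ensure $\calA$ is a single $G$-orbit, so that equivariance delivers a genuinely constant-risk estimator); everything else is bookkeeping.
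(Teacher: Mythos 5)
Your proof is correct, and it takes a genuinely different (though dual) route to the paper's own argument. The paper symmetrizes the \emph{prior}: it notes $\MMSE_N(p,\pi) = \MMSE_N(p,\pi_\sigma)$ for every $\sigma \in S_n$ by symmetry, invokes concavity of $\pi \mapsto \MMSE_N(p,\pi)$ (it's a pointwise infimum of linear functions of $\pi$, framed in the paper as a data-processing inequality) to get $\MMSE_N(p,\pi) = \E_\sigma\,\MMSE_N(p,\pi_\sigma) \le \MMSE_N(p,\E_\sigma\pi_\sigma)$, and then uses transitivity of the $S_n$-action on copies of $H$ to conclude $\E_\sigma\pi_\sigma = \pi_U$. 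You instead symmetrize the \emph{estimator}: you average a Bayes-optimal rule for $\pi_U$ over the group to produce an $S_n$-equivariant rule, observe that equivariance plus the single-orbit structure forces the per-point risk to be constant on $\calA$, and then use the standard fact that a Bayes rule with constant risk is minimax and its prior is least favorable. Both proofs hinge on the same two facts (Jensen/convexity and transitivity of the group action), but yours produces as a by-product an explicit constant-risk minimax estimator, whereas the paper's is a one-line Jensen argument on the prior side with no auxiliary construction. Your covariance identity $R(\hat A^\sigma, A) = R(\hat A, \sigma^{-1}A)$ is correct (it follows from $S_n$-invariance of the Bernoulli noise and of the $\ell_2$ norm), and the rest of your bookkeeping checks out.
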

\begin{proof}
    Let $\sigma\in S_n$ be any permutation of the vertices $[n]$ and, for a prior $\pi$, define the action of $\sigma$ on $\pi$ as follows:
    \[
    \pi_\sigma(H') = \pi(H'_{\sigma}),
    \]
    where $H'_\sigma$ is the copy of $H$ obtained by relabeling the vertices of $H'$ according to $\sigma.$ It's clear by symmetry that, for all $\pi$ and $\sigma,$ we have 
    \begin{align*}
        \MMSE_N(p,\pi) = \MMSE_N(p,\pi_\sigma).
    \end{align*}
    Now suppose we choose $\sigma$ according to the uniform distribution in $S_n.$ Since we have a Markov chain
    \[
    \sigma \to \pi_\sigma \to X_p \cup H',\; H'\sim \pi_\sigma,
    \]
    by the data processing inequality, for any $\pi,$ we have
    \[
    \MMSE_N(p, \pi) =\E_\sigma \MMSE_N(p, \pi_\sigma) \leq \MMSE_N(p, \E_\sigma\pi_\sigma).
    \]
    Finally, since $S_n$ acts transitively on all copies of $H$ in $K_n$, we have that $\E_\sigma\pi_\sigma$ is the uniform distribution for all $\pi$, and hence $\pi^*$ is uniform.
\end{proof}
\end{document}